\newtheorem{defi}{\bf Definition}[section]
\newtheorem{theo}[defi]{\bf Theorem}
\newtheorem{coro}[defi]{\bf Corollary}
\newtheorem{pro}[defi]{\bf Proposition}
\newtheorem{lem}[defi]{\bf Lemma}
\title{The derived superalgebra of skew elements of a semiprime superalgebra with superinvolution}
\author{Jes\'us Laliena  }
\date{\quad}
\begin{document}
\maketitle\vspace{-1.5cm}

\begin{abstract}

In this paper we investigate the Lie structure of the derived Lie superalgebra $[K, K]$, with $K$ the set of skew elements of a
semiprime associative superalgebra $A$ with superinvolution. We show that if $U$ is a Lie ideal
of $[K, K]$, then either there exists an ideal $J$ of $A$ such that the Lie ideal   $[J\cap K,K]$ is nonzero and contained in $U$, or  $A$ is a subdirect sum of $A'$, $A''$, where the image of $U$ in $A'$
is central, and $A''$ is a subdirect product of orders in simple superalgebras, each at most
16-dimensional over its center.

\end{abstract}

{\parindent= 4em \small  \sl Keywords: associative superalgebras, semiprime superalgebras, superin-}

{\parindent=10em \small  \sl  volutions, skewsymmetric elements, Lie structure.}

{\parindent=4em \small \bf Classification MSC 2010: 16W55, 17A70, 17C70}

\section{Introduction.}

\bigskip

Let $A$ be an algebra over $\phi$, an associative commutative unital ring of scalars with $1/2 \in \phi$. $A$ is said to be a superalgebra if it is a ${\bf Z}_2$-graded algebra, that is, $A= A_0 + A_1$, with $A_i A_j \subseteq A_{i+j}, i, j \in {\bf Z}_2$. $A_0$ is said to be the even part  and $A_1$ is said to be the odd part. Elements in $A_0$ and $A_1$ are said to be homogeneous elements.

A Lie superalgebra is a superalgebra with an operation $[\ , \ ]$  satisfying the following axioms for every $a, b, c$ homogeneous elements in $A$ (where $\bar a$ denotes the degree of $a$, that is $a\in A_{\bar a}$)

\begin{align*} 
[a, b]=& -(-1)^{\bar a \bar b} [b,a] \\
[a, [b, c]] =& [[a,b],c] + (-1) ^{\bar a \bar b} [b, [a, c]]
\end{align*}

Superalgebras have proved to be very useful in mathematics, and, in particular,  in algebra. For example in the theory of varieties of algebras, in questions concerning the structure of $T-$ideals, their nilpotence or solvability (\cite {Ke}, \cite {Z}, \cite{S-Z}); and also to construct some counterexamples, for instance, of solvable but not nilpotent Jordan, alternative and $(-1, -1)$-algebras, or to construct prime algebras with nonzero absolute zero divisors (\cite {S}).

In the last two decades, the different kinds of superalgebras have been profusely investigated, and also the relationships among them. In this paper we are interested in study some relationships among associative and Lie superalgebras. More specifically  we are interested in the description of the Lie structure of the derived superalgebra $[K,K]$, with $K$ the set of skewsymmetric elements of a semiprime superalgebra with superinvolution.

An associative superalgebra is just a superalgebra that is associative as an ordinary algebra.

 It is known that, if we take an associative superalgebra, $A$, and we change the product in $A$ by the superbracket product $[a, b]= ab- (-1)^{\bar a \bar b} ba$, where $\bar a, \bar b$ denotes the degree of $a$ and $b$, homogeneous elements in $A=A_0+A_1$, we obtain a Lie superalgebra, denoted by $A^-$. Also if $A$ is an associative superalgebra and has a  superinvolution, that is, a graded linear map $* : A \longrightarrow A$ such that $a^{* *}=a$ and $(ab)^*= (-1)^{\bar a \bar b} b^* a^*$, for $a, b\in A$ homogeneous elements, the set of skewsymmetric elements, $K=\{ x \in A : x^*= -  x\}$, is a subalgebra of the Lie superalgebra $A^-$. In fact, in the classification of the finite dimensional simple Lie superalgebras given by V. Kac  in \cite{Ka}, several types are of this kind.

This important fact made that, in \cite {Go-S}, C. G\'omez-Ambrosi and I. Shestakov investigated  the Lie structure of the set of skew elements, $K$, and also of $[K, K]$, of a simple associative superalgebra with superinvolution over a field of characteristic not 2. More specifically they described the ideals of these Lie superalgebras $K$ and $[K, K]$, also called Lie ideals of $K$ and $[K,K]$. Those results were extended in \cite {Go-L-S} to prime associative superalgebras with superinvolution for the Lie superalgebras $K$ and $[K, K]$, and later, in \cite {L-S} to semiprime superalgebras with superinvolution, but only for the Lie superalgebra $K$.

We notice that  the Lie structure of prime associative superalgebras and simple associative superalgebras without superinvolution was investigated by F. Montaner (\cite {M}) and S. Montgomery (\cite {Mo}). 

In the non graded case, there is a parallel situation for associative algebras with involution and Lie algebras. This fact was first studied by I. N. Herstein (\cite {H1}, \cite {H2}) and W. E. Baxter (\cite {B}), and after by several authors: T. E. Erickson (\cite {Er}), C. Lanski (\cite {La}, W. S. Martindale III and C. R. Miers (\cite {Ma}), \dots

For a complete introduction to the basic definitions and examples of superalgebras,
superinvolutions and prime and semiprime superalgebras, we refer the reader to \cite {Go-S} and \cite
{M}.

Throughout the paper, unless otherwise stated,  $A$ will denote a nontrivial semi-prime associative
superalgebra with superinvolution * over an associative  commutative unital ring $\phi$ of scalars with ${1\over
2}\in \phi$. By a nontrivial superalgebra we understand a superalgebra with nonzero odd part. $Z$ will
denote the even part of the center of $A$, $H$ the Jordan superalgebra of symmetric elements of $A$,
and $K$ the Lie superalgebra of skew elements of $A$. If $P$ is a subset of $A$, we will denote by
$P_H=P\cap H$ and $P_K= P\cap K$. The following containments are straightforward to check, and they
will be used throughout without explicit mention:
$[K,K]\subseteq K, \quad [K,H]\subseteq H, \quad [H,H]\subseteq K, \quad H\circ H \subseteq H, \quad
H\circ K\subseteq K$ and
$K\circ K\subseteq H$.

We recall that a superinvolution * is said to be of the first kind if $Z_H=Z$, and 
of the second kind if $Z_H\not= Z$.

If $Z\not= 0$, one can consider the localization $Z^{-1}A=\{z^{-1}a : 0\not= z \in Z, a\in A\}$. If
$A$ is prime, then  $Z^{-1}A$ is a central prime associative superalgebra over the field
$Z^{-1}Z$.  We call this superalgebra  the central closure of $A$. We also say that $A$ is a
central order in $Z^{-1}A$. This terminology is not the standard one, for which the definition involves
 the extended centroid. 

Let $A$ be a prime superalgebra, and let  $V=Z_H-\{0\}$ be the subset of regular symmetric elements. Note that if $Z\not= 0$, $Z_H\not=0$. Also  $Z^{-1}A=V^{-1}A$, since for all  $0\not=z \in Z, a\in A$ we have $z^{-1}a=(zz^*)^{-1}(z^*a)$. It will be more convenient for us, in order to extend the superinvolution in a natural way, to work with $V$ rather than with $Z$.  We may consider $V^{-1}A$ as a superalgebra over the field $V^{-1}Z_H$. Then the superinvolution on $A$ is extended to a superinvolution  of the same kind on $V^{-1}A$ over  $V^{-1}Z_H$
via $(v^{-1}a)^*=v^{-1}a^*$. It is then easy to check that $H(V^{-1}A,*)=V^{-1}H$ and
$K(V^{-1}A,*)=V^{-1}K$. Moreover, $Z(V^{-1}A)_0=V^{-1}Z$ and $V^{-1}Z\cap V^{-1}H=V^{-1}Z_H$. We will say that the superalgebra $V^{-1}A$ over the field $V^{-1}Z_H$ is the *-central closure of $A$.

 We notice that in every semiprime superalgebra $A$,  the intersection of all the prime ideals $P$
of $A$ is zero. Consequently $A$ is a subdirect product of its prime images. If each prime image of
$A$ is a central order in a simple superalgebra at most $n^2$ dimensional over its center, we  say
that $A$ verifies $S(n)$.

In this paper, we prove that if $K$ is the Lie superalgebra of skew elements of a semiprime
associative superalgebra with superinvolution, $A$, and $U$ is a Lie ideal of $[K,K]$, that is, $U$ is a $\phi$-submodule of $[K,K]$ such that $[U,[K,K]]\subseteq U$,  then one of the following alternatives must hold: either $U$ must contain a nonzero Lie ideal $[J\cap K,K]$, for $J$ an ideal of $A$, or  $A$ is a subdirect sum of $A^\prime$, $A^{\prime \prime}$, where the image of $U$ in $A^{\prime \prime}$ is central and $A^\prime $ satisfies $S(4)$.

The following results are instrumental for the paper:

\begin{lem} (\cite {H3}, Theorem 1)
Let $A$ be  a semiprime algebra  and let $L$ be a Lie ideal of $A$. If  $[a,[a,L]]=0$, then $[a,L]=0$.
\end{lem}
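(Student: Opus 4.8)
The plan is to reinterpret the hypothesis through the inner derivation $d=\mathrm{ad}\,a$, where $d(x)=[a,x]$. Since $L$ is a Lie ideal and $a\in A$, we have $d(L)\subseteq L$, and the hypothesis reads $d^2(L)=0$. First I would exploit $1/2\in\phi$: for $x,y\in L$ one has $[x,y]\in L$, and expanding $d^2([x,y])=[d^2x,y]+2[dx,dy]+[x,d^2y]$ kills the outer terms, so $2[dx,dy]=0$ and hence $[d(L),d(L)]=0$. Thus $W:=d(L)$ is a commutative subset of $L$ with $d(W)=d^2(L)=0$; in particular every $w=[a,u]$ (with $u\in L$) commutes with $a$. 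This in turn yields the multiplicative identities $d(ws)=w\,d(s)$ and $d(sw)=d(s)\,w$, and shows that $d$ and $\mathrm{ad}\,w$ commute as operators.

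Next I would reduce to the prime case. As recalled above, a semiprime $A$ is a subdirect product of its prime images $A/P$, and the whole hypothesis descends: $\overline L$ is a Lie ideal of $A/P$ and $[\bar a,[\bar a,\overline L]]=0$. If the prime case yields $[\bar a,\overline L]=0$ for every $P$, then $[a,L]\subseteq\bigcap_P P=0$, as desired. So I may assume $A$ prime.

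The template for the prime case is the easy full-ring version: if $d^2(A)=0$ then $d^2(xy)=2\,dx\,dy$ forces $dx\,dy=0$ for all $x,y$, whence $dx\,r\,dy=dx\,d(ry)-dx\,dr\,y=0$, so $dx\,A\,dx=0$ and primeness gives $dx=0$. The difficulty is that a Lie ideal is not closed under multiplication, so the bracket computations only ever produce commutator identities (for instance, $[w,r]\in L$ gives $[w,d(r)]=d([w,r])\in W$, and commutativity of $W$ then yields $[w,[w,d(r)]]=0$ for all $r$), never the products needed to feed primeness. To restore multiplicative room I would split on whether $L$ is central. If $L\subseteq Z$ then $[a,L]=0$ trivially. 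If $L\not\subseteq Z$, I would invoke the standard structure theory of Lie ideals of a prime ring to produce a nonzero ideal $I$ with $[I,A]\subseteq L$; since $I$ absorbs multiplication, one can rerun the full-ring template inside $I$, combining $d^2([I,A])=0$ with the identities $d(ws)=w\,d(s)$ to manufacture an annihilation of the form $w\,I\,w=0$, whence $w=0$ by primeness.

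The main obstacle is precisely this last transfer: converting the commutator-level identities that a Lie ideal supplies into a genuine two-sided annihilation to which (semi)primeness applies. Everything else---the derivation reformulation, the use of $1/2\in\phi$ to obtain $[d(L),d(L)]=0$, and the subdirect reduction to prime rings---is routine; the work is concentrated in locating an honest ideal inside $L$ when $L$ is non-central and carrying the derivation computation across the bracket $[I,A]\subseteq L$, so that products, and not merely commutators, become available.
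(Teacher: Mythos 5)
You should first be aware that the paper offers no proof of this statement at all: Lemma~1.1 is quoted from Herstein (\cite{H3}, Theorem~1), so your proposal has to stand on its own. Its opening is correct and coincides with the standard opening of Herstein's argument: with $d=\mathrm{ad}\,a$ one has $d(L)\subseteq L$ and $d^2(L)=0$; expanding $d^2([x,y])$ for $x,y\in L$ and using $1/2\in\phi$ gives $[d(L),d(L)]=0$; $d(W)=0$ for $W=d(L)$; and the subdirect-product reduction to prime images is legitimate (though, as it turns out, unnecessary). The problem is that your proof stops exactly where the theorem begins. In the prime, non-central case you appeal to the structure theorem producing a nonzero ideal $I$ with $[I,A]\subseteq L$ (a legitimate but heavyweight import, of depth comparable to the lemma itself), and then merely assert that one can ``rerun the full-ring template inside $I$'' to ``manufacture'' an annihilation $w\,I\,w=0$; no such computation is given, and your closing paragraph concedes that converting the commutator-level identities into a two-sided annihilation is ``the main obstacle.'' That obstacle is not a detail to be deferred: it \emph{is} the content of Herstein's theorem, so what you have is a strategy plus an unproved claim, not a proof.

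There is also concrete reason to doubt that the step is routine along the route you sketch. Having the ideal $I$ does not turn the hypothesis into a statement about products: it still only says that $d^2$ kills the commutators $[I,A]$, and the naive consequences of this on honest elements of $I$ are strictly weaker than what is needed. For instance, setting $r=a$ in $d^2([x,r])=0$ for $x\in I$ yields only $d^3(I)=0$, and that condition does not force $d(I)=0$: in $M_2(F)$ with $a$ the matrix unit $e_{12}$ one has $(\mathrm{ad}\,a)^3=0$ while $\mathrm{ad}\,a\neq 0$. So any correct completion must exploit the full hypothesis through manipulations your plan never performs, of the kind that actually drive Herstein's proof (and, in the super setting, the computations in Lemma~2.1 of this very paper): substitute $x\mapsto ax$ and $x\mapsto xa$ into the expanded identity $2[[a,u],[a,x]]+[u,[a,[a,x]]]=0$ (valid for $u\in L$, $x\in A$ because $[u,x]\in L$); the cancellations coming from $[a,[a,u]]=0$ and $d(a)=0$ then produce the one-sided \emph{product} relations $[a,u]\,[a,[a,x]]=0=[a,[a,x]]\,[a,u]$, and further substitutions of the same kind (e.g.\ replacing $y$ by $[a,u]y$ inside $[a,u]\,d^2(xy)=0$) yield genuine two-sided annihilations such as $N\,A\,N=0$ for $N=[a,u][a,x][a,u]$, to which semiprimeness applies directly --- with no detour through prime quotients or the Lie-ideal structure theorem. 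It is precisely this commutator-to-product mechanism, which your last paragraph correctly identifies as the crux but does not supply, that is missing.
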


\begin{lem} (\cite {M}, Lemmata 1.2, 1.3)
If $A=A_0 \oplus A_1 $ is a semiprime superalgebra, then  $A_0$ is a semiprime algebra. Moreover, if $A$ is prime, then 
either $A$ is prime or $A_0$ is prime (as algebras).
\end{lem}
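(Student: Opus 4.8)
The plan is to organize everything around the grading automorphism $\sigma\colon A\to A$, $\sigma(a_0+a_1)=a_0-a_1$ (with $a_i\in A_i$), which has order $2$, fixes $A_0$ pointwise and acts as $-1$ on $A_1$. Since $1/2\in\phi$, a $\phi$-submodule is $\sigma$-invariant if and only if it is graded, and for any ordinary ideal $I$ the set $I+\sigma(I)$ is the graded ideal it generates. The first thing I would record is that graded-semiprimeness forces ordinary semiprimeness: if $I$ is an ordinary ideal with $I^2=0$ then also $\sigma(I)^2=0$, and since the sum of two square-zero ideals is nilpotent one gets $(I+\sigma(I))^3=0$; as $I+\sigma(I)$ is a nonzero graded ideal, this contradicts the hypothesis unless $I=0$. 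Hence $A$ is semiprime as an ordinary algebra, and I shall use the element characterization $aAa=0\Rightarrow a=0$ freely.

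For the first assertion, let $B$ be an ideal of $A_0$ with $B^2=0$; I must show $B=0$. From $BA_0\subseteq B$ one gets $BA_0B\subseteq B^2=0$, so the even interaction already vanishes. The point is the odd interaction: put $W=BA_1B\subseteq A_1$. Using $A_1A_1\subseteq A_0$ together with $BA_0B=0$, both halves of $WAW$ collapse through a factor $BA_0B$; explicitly $WA_1W=BA_1BA_1BA_1B\subseteq B(A_1BA_1)BA_1B\subseteq B A_0 BA_1B=(BA_0B)A_1B=0$, and likewise $WA_0W=BA_1(BA_0B)A_1B=0$. Thus $WAW=0$, so $W=0$ by semiprimeness, i.e. $BA_1B=0$. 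Combining, $BAB=BA_0B+BA_1B=0$, hence $bAb=0$ for every $b\in B$ and therefore $B=0$. This shows $A_0$ is semiprime.

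For the ``moreover'' part I would argue in the prime case by contraposition: assume $A$ is prime \emph{as a superalgebra} (graded-prime) but \emph{not} prime as an ordinary algebra, and prove that $A_0$ is prime as an algebra. The structural core — and the step I expect to be the main obstacle — is to exhibit a minimal prime ideal $P$ of the ordinary algebra $A$ with $\sigma(P)\neq P$ and $P\cap\sigma(P)=0$. This rests on graded-primeness through the action of $\sigma$ on the (finite) set of minimal primes, whose intersection is $0$: not all of them can be $\sigma$-invariant, for if they were, then for a fixed minimal prime $P_1$ the $\sigma$-invariant ideals $P_1$ and $N:=\bigcap_{Q\neq P_1}Q$ would satisfy $P_1N\subseteq P_1\cap N=0$, forcing $N=0$ by graded-primeness and contradicting the minimality of $P_1$. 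So some minimal prime $P$ has $\sigma(P)\neq P$; applying the same dichotomy to the $\sigma$-invariant ideals $P\cap\sigma(P)$ and $N':=\bigcap_{Q\neq P,\sigma(P)}Q$ (whose product lies in their zero intersection) yields $P\cap\sigma(P)=0$. Since $\sigma$ fixes $A_0$ pointwise, $P\cap A_0=\sigma(P)\cap A_0\subseteq P\cap\sigma(P)=0$, so the restriction $A_0\hookrightarrow\bar A:=A/P$ is injective onto a subring $\bar A_0$ of the prime algebra $\bar A$, and it suffices to prove $\bar A_0$ is prime.

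The concluding idea is to exploit how the grading degenerates modulo $P$. Writing $\bar A_1$ for the image of $A_1$, one checks from $\bar A_1\bar A_1\subseteq\bar A_0$ that $\bar Q:=\bar A_0\cap\bar A_1$ is a two-sided ideal of $\bar A=\bar A_0+\bar A_1$, and a direct computation (for $c\in\sigma(P)$ one has $\sigma(c)\in P$, so the even and odd parts of $c$ become equal in $\bar A$) identifies $\bar Q$ with $\overline{\sigma(P)}=(P+\sigma(P))/P$, which is nonzero because $\sigma(P)\not\subseteq P$. Thus $\bar Q$ is a nonzero ideal of the prime ring $\bar A$. Now suppose $\bar B,\bar C$ are ideals of $\bar A_0$ with $\bar B\bar C=0$. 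Since $\bar Q\subseteq\bar A_0$ is an ideal of $\bar A$, the sets $\bar B^{*}:=\bar Q\bar B\bar Q$ and $\bar C^{*}:=\bar Q\bar C\bar Q$ are ideals of $\bar A$, and, using $\bar Q\bar Q\subseteq\bar Q$, $\bar Q\subseteq\bar A_0$ and $\bar B\bar A_0\subseteq\bar B$, one gets $\bar B^{*}\bar C^{*}\subseteq\bar Q(\bar B\bar A_0\bar C)\bar Q=0$. Primeness of $\bar A$ gives $\bar B^{*}=0$ or $\bar C^{*}=0$; and for a nonzero ideal $\bar Q$ of a prime ring, $\bar Q\bar x\bar Q=0$ forces $\bar x=0$, so $\bar B=0$ or $\bar C=0$. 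Hence $\bar A_0\cong A_0$ is prime, which completes the proof.
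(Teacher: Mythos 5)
Your first half (semiprimeness of $A_0$) is correct and self-contained: the reduction from graded-semiprimeness to ordinary semiprimeness via $I+\sigma(I)$, and the collapsing computation $W A W=0$ for $W=BA_1B$, both check out. Note that the paper itself gives no proof of this lemma (it is quoted from Montaner's paper), so only the mathematics of your argument is at issue.

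In the ``moreover'' part there is a genuine gap, and it sits exactly at the structural core you yourself flagged: the existence of a prime $P$ with $P\neq\sigma(P)$ and $P\cap\sigma(P)=0$. Your argument runs through the set of minimal primes and asserts parenthetically that this set is \emph{finite}, with no justification. A semiprime ring can have infinitely many minimal primes, and your two contradiction steps both fail without finiteness: from $N=\bigcap_{Q\neq P_1}Q=0$ you conclude ``contradiction with the minimality of $P_1$'', which tacitly uses that a prime containing an intersection of ideals contains one of them --- valid for finitely many ideals (via the product), false for infinite families (e.g.\ in an atomless Boolean ring the intersection of all minimal primes except one is still $0$). Worse, the finiteness of the set of minimal primes of a graded-prime ring is itself a theorem whose standard proof goes through precisely the conclusion you are trying to reach, so invoking it here is close to circular. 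The standard repair avoids minimal primes altogether: since $A$ is not prime, pick nonzero ideals $I,J$ with $IJ=0$; then $\bigl(I\cap\sigma(I)\bigr)\bigl(J+\sigma(J)\bigr)\subseteq IJ+\sigma(IJ)=0$, and graded-primeness forces $I\cap\sigma(I)=0$, so the family of ideals $L$ with $L\cap\sigma(L)=0$ contains a nonzero member. This family is inductive (for a chain, any element of $\bigcup L_\alpha\cap\sigma(\bigcup L_\alpha)$ lies in some $L_\alpha\cap\sigma(L_\alpha)=0$), so Zorn gives a maximal $P\supseteq I$, necessarily nonzero; and $P$ is prime, because if $J',K'\supsetneq P$ with $J'K'\subseteq P$, then maximality gives $J'\cap\sigma(J')\neq 0\neq K'\cap\sigma(K')$, while $\bigl(J'\cap\sigma(J')\bigr)\bigl(K'\cap\sigma(K')\bigr)\subseteq P\cap\sigma(P)=0$ contradicts graded-primeness. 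With this $P$ in hand, the remainder of your proof is sound and rather elegant: the identifications $P\cap A_0=0$, $\bar Q=\bar A_0\cap\bar A_1=(P+\sigma(P))/P\neq 0$ an ideal of $\bar A$ contained in $\bar A_0$, and the sandwich $\bar Q\bar B\bar Q\cdot\bar Q\bar C\bar Q=0$ all verify correctly, so only the existence step needs to be replaced.
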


\begin{lem} (\cite {M}, Lemma 1.8)
Let $A=A_0 \oplus A_1$ be a prime superalgebra. Then

\begin{enumerate}
\item[{\rm (i)}] If $x_1 \in A_1 $ centralizes a nonzero ideal $I$ of $A_0$, then
$x_1 \in Z(A)$.

\item[{\rm (ii)}] If $x_1 ^2$ belongs to the center of a nonzero ideal $I$ of $A_0$, then
$x_1 ^2 \in Z(A)$.
\end{enumerate}
\end{lem}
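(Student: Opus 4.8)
The plan is to treat both statements at once by encoding each hypothesis as the vanishing of an inner derivation on the ideal $I$. Write $w=x_1$ in case (i) and $w=x_1^2$ in case (ii), and let $\delta_w$ be the ordinary inner derivation $\delta_w(a)=wa-aw$ of $A$ viewed as an (ungraded) associative algebra; it is homogeneous of the degree of $w$. Since $Z(A)$ consists of the elements commuting with all of $A$, the conclusion $w\in Z(A)$ is precisely the assertion $\delta_w\equiv 0$, so it suffices to show that $\delta_w$ kills $A_0$ and $A_1$ separately. The hypothesis that $w$ centralizes $I$ says $\delta_w(I)=0$, and the first easy observation is that $\delta_w$ annihilates $I$ on both sides: for $a\in A_0$ and $c\in I$ one has $\delta_w(a)c=\delta_w(ac)-a\,\delta_w(c)=\delta_w(ac)=0$ since $ac\in I$, and symmetrically $c\,\delta_w(a)=0$; hence $\delta_w(A_0)\,I=I\,\delta_w(A_0)=0$. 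In case (i) I also record that $x_1^2$ centralizes $I$, so once (ii) is proved I may use $x_1^2\in Z(A)$ while finishing (i).

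Next I would prove $\delta_w(A_0)=0$, invoking the dichotomy of Lemma 1.2: either $A_0$ is prime or $A$ is prime, each as an ordinary algebra. Suppose first $A_0$ is prime. In case (ii) the element $y=\delta_w(a)$ lies in $A_0$, the ideal of $A_0$ it generates annihilates the nonzero ideal $I$, and primeness of $A_0$ forces $y=0$. In case (i) we have $y\in A_1$; from $I(yA_1)=(Iy)A_1=0$ and the vanishing of the annihilators of the nonzero ideal $I$ in the prime ring $A_0$ one gets $yA_1=A_1y=0$, whence $yA_0y\subseteq A_1y=0$ and $yA_1y=0$, so $yAy=0$ and the graded primeness of $A$ yields $y=0$. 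The remaining case, $A$ prime as an ordinary algebra, is the delicate one: starting again from $yI=Iy=0$ one must upgrade annihilation of the $A_0$-ideal $I$ to annihilation of a genuine ideal of $A$, for which I would work with the nonzero ideal $\bar I=AIA$ of $A$ together with classical prime-ring annihilator arguments. I expect this ungraded-prime reduction to $\delta_w(A_0)=0$ to be the main obstacle, precisely because an ideal of $A_0$ need not be an ideal of $A$, so the primeness of $A$ cannot be applied to $I$ directly.

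With $\delta_w(A_0)=0$ in hand I would finish by showing $\delta_w(A_1)=0$ through the graded-primeness engine $gAg=0\Rightarrow g=0$ for homogeneous $g$. In case (i), since $\delta:=\delta_{x_1}$ sends $A_1$ into $A_0$ and kills $A_0$, differentiating $a_1b_1\in A_0$ gives $\delta(a_1)b_1=-a_1\delta(b_1)$, and applying $\delta$ once more, using $\tfrac12\in\phi$, yields $\delta(A_1)\delta(A_1)=0$. Setting $g=\delta(a_1)$ one checks $gA_0\subseteq\delta(A_1)$ (from $gb_0=\delta(a_1b_0)$) and $gA_1=a_1\delta(A_1)$, whence $gA_0g\subseteq\delta(A_1)\delta(a_1)=0$ and $gA_1g\subseteq a_1\delta(A_1)\delta(a_1)=0$; thus $gAg=0$ and $g=0$. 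In case (ii), with $z=x_1^2$ now central in $A_0$, the relations $z(x_1a_1)=(x_1a_1)z$ and $z(a_1x_1)=(a_1x_1)z$ give $x_1\delta_z(a_1)=\delta_z(a_1)x_1=0$, hence $z\,\delta_z(a_1)=\delta_z(a_1)z=0$; from this $\delta_z(c_1)\delta_z(a_1)=z\,c_1\,\delta_z(a_1)=c_1\,\delta_z(a_1)\,z=0$, and the same two containments $gA_0\subseteq\delta_z(A_1)$, $gA_1=a_1\delta_z(A_1)$ give $gAg=0$, so $g=\delta_z(a_1)=0$. In both cases $\delta_w$ vanishes on $A_1$ as well, so $w\in Z(A)$, completing (i) and (ii).

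The recurring device throughout is that a homogeneous element $g$ with $gAg=0$ must vanish in a prime superalgebra, so the whole argument is organised to reduce each claim to producing such a $g$. The genuinely new feature compared with the non-graded theory is the mismatch between ideals of $A_0$ and ideals of $A$; the dichotomy of Lemma 1.2 is what lets me navigate it, handling the $A_0$-prime situation by annihilator arguments inside $A_0$ and the $A$-prime situation by passing to the generated ideal $\bar I=AIA$.
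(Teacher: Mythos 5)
The paper itself offers no proof to compare against: this lemma is imported verbatim from \cite{M} (Lemma 1.8), so your attempt has to be judged on its own merits. Much of it is sound: step 1 (the two-sided annihilation $\delta_w(A_0)I=I\delta_w(A_0)=0$), step 3 in both cases (including the correct use of the fact that $z=x_1^2$ commutes with $x_1$), and the half of step 2 where $A_0$ is prime are all complete and correct. The genuine gap is the other half of the dichotomy, which you yourself flag but do not close: $A$ prime as an ungraded algebra with $A_0$ not prime. This case really occurs ($M_{p+q}(F)$ with $A_0=M_p(F)\times M_q(F)$), it is where the whole content of the lemma sits, and the route you propose for it cannot work. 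Two-sided annihilation of $I$ does not upgrade to annihilation of $AIA$ by any ``classical annihilator argument'': in $M_2(F)$ with the diagonal grading, $y=e_{22}$ kills $I=Fe_{11}$ on both sides, yet $yAy\neq 0$ and $AIA=A$. Worse, in part (ii) the statement your step 2 attacks is false at the level of generality at which you attack it, because you weakened the hypothesis: ``$x_1^2$ belongs to the center of $I$'' means in particular $x_1^2\in I$, while you keep only ``$x_1^2$ centralizes $I$''. With only centralization, an even element of a simple superalgebra can centralize a nonzero ideal of $A_0$ without being central: take $A=M_3(F)$ with $A_0=M_2(F)\times F$, $I=Fe_{33}$ and $w=e_{11}$; then $[w,I]=0$ but $[w,A_0]\neq 0$. (This does not contradict the lemma, since $e_{11}$ is not the square of an odd element; it shows that any proof of your weakened (ii) must use the odd-square structure already in step 2, which your plan postpones to step 3.)

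Both halves can in fact be closed, with ideas absent from your text, and then the dichotomy of Lemma 1.2 is not even needed — only semiprimeness of $A_0$. For (ii), restore the membership $w=x_1^2\in I$: then $y=[w,a_0]\in I$ (as $I$ is an ideal of $A_0$), and $yA_0y\subseteq Iy=0$, so $y=0$ because $A_0$ is semiprime; this gives $\delta_w(A_0)=0$ at once. For (i), use that $y=[x_1,a_0]$ is odd: the set $X=yA_1I$ lies in $A_0$, is a right ideal of $A_0$ contained in $I$, and satisfies $IX=(Iy)A_1I=0$, hence $X^2\subseteq IX=0$ and $X=0$ by semiprimeness of $A_0$. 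With $yI=Iy=0$ and $yA_1I=0$ one then checks that $JyJ=0$ for the graded ideal $J=I+IA+AI+AIA$ generated by $I$: a typical term $I\,a\,y\,b\,I$ with $a,b$ homogeneous contains $Iy=0$ if $a$ is even, contains $yI=0$ if $b$ is even, and lies in $IA_1(yA_1I)=0$ if both are odd; and $JyJ=0$ with $J\neq 0$ forces $y=0$ in a prime superalgebra. After either of these repairs your step 3 finishes the argument, but as written the proposal is missing precisely the step that makes the lemma true.
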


\bigskip

\begin{lem} (\cite {Go-L-S}, Corollary 2)
Let $A$ be a semiprime superalgebra and $L$ a Lie ideal of $A$. Then either $[L,L]=0$, or $L$ is dense
in $A$.
\end{lem}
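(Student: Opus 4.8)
The plan is to reduce to the prime case and then reassemble over the subdirect decomposition furnished by semiprimeness. Since $A$ is semiprime, the intersection of its prime ideals is zero, so $A$ is a subdirect product of the prime superalgebras $A/P$, and for each $P$ the image $\bar L=(L+P)/P$ is a Lie ideal of $A/P$. If $[\bar L,\bar L]=0$ for every $P$, then $[L,L]\subseteq\bigcap_P P=0$ and the first alternative holds; so the whole problem is to show that in each prime image where $[\bar L,\bar L]\neq 0$ the Lie ideal $\bar L$ is large (contains a nonzero graded ideal of $A/P$), and that these local conclusions package into the density of $L$ in $A$.

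The heart of the argument is therefore the prime case, which I would attack by the classical Herstein-type commutator calculus adapted to the grading. Assuming $[\bar L,\bar L]\neq 0$, the aim is to manufacture a nonzero two-sided ideal sitting inside $\bar L$: starting from a pair $u,v\in\bar L$ with $[u,v]\neq 0$, one feeds elements of $A/P$ through the super-Jacobi identity to show that expressions such as $[[u,r],v]$ and $[u,[v,r]]$ remain in $\bar L$, and then that a suitable set of products $a(A/P)b$ formed from these commutators is absorbed by $\bar L$. Lemma 1 is what prevents the construction from collapsing: whenever the calculus produces an element $a$ with $[a,[a,\bar L]]=0$ it forces $[a,\bar L]=0$, which both excludes the degenerate possibility and pins down exactly when $\bar L$ is central.

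The grading is handled by Lemmas 2 and 3. Lemma 2 lets me split into the case where $A/P$ itself is prime and the case where only its even part $(A/P)_0$ is prime, so that the even commutator calculus can be run inside a genuine prime ring. Lemma 3 is used on the odd component: it guarantees that an odd element centralizing a nonzero ideal of the even part is already central, so that a $\bar L$ which looks central on $(A/P)_0$ is central in $A/P$, and so that the ``central'' alternative really lands in $Z(A/P)$ rather than merely commuting with the even part. Reading the per-prime dichotomy back through the subdirect embedding then yields the global statement: either $[L,L]=0$, or else $\bar L$ contains a nonzero ideal in every prime image in which $[\bar L,\bar L]\neq 0$, which is exactly the density of $L$.

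I expect the main obstacle to be the prime case, and within it the bookkeeping imposed by the $\mathbf{Z}_2$-grading: the commutator identities carry the sign factors $(-1)^{\bar a\bar b}$, squares of odd elements behave differently from the ungraded situation, and the reduction via Lemma 2 may leave $L$ concentrated in the odd part, where one must run the argument on $(A/P)_0$ and transport the resulting ideal back up using Lemma 3. Ensuring that the ideal produced on the even part genuinely enlarges to a nonzero graded ideal contained in $\bar L$, uniformly across all prime images, is the delicate step that makes the local-to-global passage work.
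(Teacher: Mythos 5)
Your proposal fails at the step you describe as ``these local conclusions package into the density of $L$ in $A$,'' and that step is the entire content of the lemma in the semiprime setting. Recall the paper's definition: $L$ is dense in $A$ when the subalgebra $\overline L$ generated by $L$ contains a nonzero ideal of $A$. Your per-prime analysis can at best show that, for each prime $P$ with $[(L+P)/P,(L+P)/P]\neq 0$, the subalgebra $(\overline L+P)/P$ contains a nonzero ideal of $A/P$; pulling back along $A\to A/P$ yields only an ideal $I_P$ of $A$ with $P\subsetneq I_P\subseteq \overline L+P$. Since $P$ is not contained in $\overline L$, this never produces an ideal of $A$ inside $\overline L$ itself, and the ideals $I_P$ for the various primes cannot be combined: their intersection may be zero, and $\bigcap_P(\overline L+P)$ is in general strictly larger than $\overline L$. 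Ideals, unlike centrality, do not descend through a subdirect decomposition. This is precisely why Sections 3 and 4 of the paper are allowed to argue prime-by-prime --- there the desired conclusions (centrality of the image of $U$ in $A/P$, or $A/P$ satisfying $S(4)$) are themselves statements about the quotients --- whereas every density statement in this circle of ideas is proved globally.

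Note that the present paper does not prove this lemma at all; it quotes it from \cite{Go-L-S}, Corollary 2. But the correct template is visible in the paper's own Lemma 2.1, which is the analogue of this statement for Lie ideals of $[K,K]$: one fixes homogeneous elements of the Lie ideal, manipulates the identities (1)--(4), valid in an arbitrary superalgebra, to establish an inclusion of the form $AcAcAcA\subseteq\overline L$ for an explicit commutator expression $c$ built from those elements, and only at the very end invokes semiprimeness --- if $c\neq 0$ then $AcAcAcA$ is a nonzero ideal of $A$ contained in $\overline L$, so $L$ is dense, while if every such $c$ vanishes one concludes $[L,L]=0$. Semiprimeness is used exactly once, to guarantee nonvanishing of a globally constructed ideal; no prime ideals and no gluing appear. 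A secondary defect of your sketch: even within a single prime image, the dichotomy you want is not what the prime theorem (Lemma 1.5) gives, since that theorem requires the superalgebra to be nontrivial and carries the exceptional case of central orders in $4$-dimensional Clifford superalgebras, in which a Lie ideal may be neither central nor dense; those images would need a separate structural argument before your (already invalid) gluing step could even begin.
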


\begin{lem}(\cite {Go-L-S}, Theorem 2.1)
Let $A$ be a prime nontrivial associative superalgebra. If $L$ is a Lie ideal of $A$,  then either
$L\subseteq Z$ or $L$ is dense in $A$, except if $A$ is a central order in a 4-dimensional Clifford
superalgebra.
\end{lem}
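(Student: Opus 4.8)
The plan is to assume $L\not\subseteq Z$ and prove that $L$ is dense unless $A$ is a central order in a $4$-dimensional Clifford superalgebra. The first move is the dichotomy of \cite{Go-L-S} (Corollary~2 above): if $[L,L]\neq 0$ then $L$ is already dense and there is nothing left to do. Hence the whole content is concentrated in the case $[L,L]=0$, where I must show that a non-central \emph{abelian} Lie ideal can occur only in the exceptional Clifford situation.

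So suppose $[L,L]=0$. I would first read off the structural consequences on homogeneous elements. For an odd $x\in L_1$ the super-bracket gives $[x,x]=2x^2$, so $x^2=0$ since $1/2\in\phi$; and for odd $x,y\in L_1$ one gets $[x,y]=xy+yx=0$. Thus $L_1$ consists of square-zero, mutually anticommuting elements, which is exactly the shape of the odd generators of a Clifford superalgebra. Moreover, as $L$ is a Lie ideal, $[x,A]\subseteq L$ for every homogeneous $x\in L$, so $[x,[x,A]]\subseteq[x,L]\subseteq[L,L]=0$.

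Next I would descend to the even part. By \cite{M} (Lemmata~1.2--1.3) the algebra $A_0$ is semiprime, and $L_0=L\cap A_0$ is an ordinary Lie ideal of $A_0$ on which the super-bracket is the usual commutator. From $[x,[x,A_0]]=0$ for $x\in L_0$, Herstein's Lemma (\cite{H3}, applied to $A_0$ as a Lie ideal of itself) forces $[x,A_0]=0$, so $L_0$ centralizes $A_0$. For an odd $x\in L_1$ the map $a\mapsto[x,a]=xa-ax$ carries $A_0$ into $L_1$, whence $(xa-ax)^2=0$ for all $a\in A_0$ together with $x^2=0$. Linearizing these square-zero relations and using \cite{M} (Lemma~1.8), which promotes an odd element centralizing a nonzero ideal of $A_0$, or whose square is central in such an ideal, to a genuinely central element, I would either place $x$ in $Z$ or extract a nonzero polynomial identity on $A$.

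The main obstacle is the endgame: converting the low-degree relations $x^2=0$ and $(xa-ax)^2=0$ into the precise statement that $A$ is a central order in a $4$-dimensional Clifford superalgebra. The intended argument is that if $L\not\subseteq Z$ then at least one of these relations is a nontrivial identity, so $A$ is a prime PI-superalgebra; a Posner-type passage to the *-central closure then yields a finite-dimensional central simple superalgebra over a field, and the degree of the governing identities---generated by anticommuting square-zero odd elements over a centralized even part---caps its dimension at $4$ and identifies it with the Clifford superalgebra on those odd generators. In the complementary case every such relation collapses: the odd elements of $L$ become central by \cite{M} (Lemma~1.8), the even ones already centralize $A_0$ and are forced to centralize $A_1$ as well, so $L\subseteq Z$ and the dichotomy is complete.
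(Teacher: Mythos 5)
The paper offers no proof of this lemma at all: it is imported verbatim as Theorem 2.1 of \cite{Go-L-S}, so there is no internal argument to compare against and your proposal must stand on its own. Its opening half does stand: reducing via Lemma 1.4 to the case $[L,L]=0$, extracting $x^2=0$ and $xy+yx=0$ for $x,y\in L_1$, and applying Herstein's lemma (Lemma 1.1) inside the semiprime algebra $A_0$ (Lemma 1.2) to get $[L_0,A_0]=0$ are all correct, and this is the natural skeleton for such a proof.

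The genuine gap is the entire endgame, which you yourself present only as ``the intended argument,'' and it fails exactly where the theorem has its content. First, the relations $x^2=0$ and $(xa-ax)^2=0$ are quantified over $x\in L_1$ and $a\in A_0$, \emph{not} over all of $A$; they are therefore not polynomial identities of $A$, and for a fixed non-central $x$ they are at best generalized polynomial identities with coefficient $x$. A ``Posner-type passage'' is thus not available: one would need Martindale's GPI theory, which after central closure yields primitivity with nonzero socle, not finite-dimensionality, and in any case no argument whatsoever is given for the bound $4$ or for the Clifford form --- the phrase ``caps its dimension at 4'' is an assertion, and it is precisely the statement to be proved. Second, the appeal to Lemma 1.3 (\cite{M}, Lemma 1.8) is empty as written: part (ii) is vacuous because $x^2=0$, and the hypothesis of part (i), that $x$ centralize a nonzero ideal of $A_0$, is never derived from your relations. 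Third, the ``complementary case'' does not land on the lemma's conclusion: Lemma 1.3 would place odd elements in $Z(A)$, the full centre, whereas the $Z$ of the statement is the \emph{even} part of the centre, which contains no nonzero odd element; so from ``the odd elements of $L$ become central'' one cannot write $L\subseteq Z$ --- one must still show such elements vanish (they do: an ordinary-central odd $x$ with $x^2=0$ generates a nilpotent graded ideal, contradicting semiprimeness, but this step is absent), and likewise the claim that $L_0$ ``is forced to centralize $A_1$ as well'' is unproven (fixable by applying Lemma 1.1 in $A$ itself, using $[x,[x,A]]\subseteq[L,L]=0$, once one checks that a semiprime superalgebra with $1/2\in\phi$ is semiprime as an ungraded algebra). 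In sum, you have proved the easy dichotomy; the exceptional case, which is the substance of \cite{Go-L-S}, Theorem 2.1, remains unproved.
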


We remark that the bracket product in Lemma 1.1 is the usual one: $[a,b]=ab-ba$, but the bracket
product in Lemmata 1.4,1.5 is the superbracket $[x_i,y_j]_s=x_iy_j-(-1)^{ij}y_jx_i$ for $x_i\in
A_i, y_j\in A_j$ homogenous elements. In fact, the superbracket product coincides with the usual
bracket if one of the arguments belongs to the even part of $A$. In the following, to simplify the
notation, we will denote both in the usual way $[\ , \ ]$ but we will understand that it is the
superbracket if we are in a superalgebra.

\bigskip

During the paper we will use very often the following identities in a superalgebra $A$, for $a, b, c$ homogeneous elements in $A$:

\bigskip

\begin{gather}
[a, bc]= [a,b] c + (-1)^{\bar a \bar b} b[a,c], \\
[ab,c] = a[b,c]+ (-1)^{\bar b \bar c} [a,c]b\\
[a,b\circ c]= [a,b]\circ c + (-1)^{\bar a \bar b} b\circ [a,c]\\
[a\circ b, c] = a \circ [b,c] + (-1)^{\bar b \bar c} [a,c]\circ b
  \end{gather}

\section {Lie structure of $[K,K]$.}

\bigskip
 
Let $A$ be an associative superalgebra and $M,S$ be $\Phi$-submodules of $A$. Define $(M:S)=\{a\in A :
aS\subseteq M\}$, and denote by $\overline M$ the subalgebra of $A$ generated by $M$. We will say that $M$ is dense in $A$ if $\overline M$ contains a nonzero ideal of $A$.

Also we define the following multiplication on $A$: $u\circ v= uv + (-1)^{\bar u \bar v} vu$.

 Let $U$ be a Lie ideal of $[K,K]$. We recall  (see Lemma 4.1 in \cite {Go-S}) that $K^2$ is a Lie ideal of $A$. So $\overline {K^2}$ is also a Lie ideal or $A$, because for every $k, l $ homogeneous elements in $K^2$ and for every $a$ homogenous element in $A$ we have $[k l, a] = k [l, a ] + (-1)^{\bar l \bar a} [k, a] l\in \overline {K^2}$.
 
\begin{lem}
If $A$ is semiprime, then either $U$ is dense in $A$ or $[u\circ v,w]=0$
for every homogeneous elements $u,v\in [U,U], w\in U$.
\end{lem}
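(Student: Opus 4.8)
The plan is to prove the dichotomy by assuming $U$ is not dense in $A$ and deriving the identity $[u\circ v,w]=0$ for all homogeneous $u,v\in[U,U]$ and $w\in U$. First I would record the closure facts used throughout. Since $U$ is a Lie ideal of $[K,K]$ and $[U,U]\subseteq U\subseteq[K,K]$, two applications of the graded Jacobi identity give $[U,[U,U]]\subseteq[U,U]$, so the commutators $[u,w]$ and $[v,w]$ appearing below again lie in $[U,U]$; moreover $u\circ v\in K\circ K\subseteq H$ while $w\in K$, whence $[u\circ v,w]\in[H,K]\subseteq K$ and sits inside the subalgebra $\overline U$. Expanding with identity (4) gives $[u\circ v,w]=u\circ[v,w]+(-1)^{\bar v\bar w}[u,w]\circ v$, exhibiting the target element inside the $\circ$-span of $[U,U]$.

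The first real step is to dispose of the ``small'' alternative using $\overline{K^2}$, which is a Lie ideal of $A$. By Lemma 1.4 either $[\overline{K^2},\overline{K^2}]=0$ or $\overline{K^2}$ is dense. In the first case I would run the standard Herstein argument: for $a\in\overline{K^2}$ and any $x\in A$ one has $[a,x]\in\overline{K^2}$, so $[a,[a,x]]\in[\overline{K^2},\overline{K^2}]=0$; Lemma 1.1, in its ordinary-bracket form applied on the even part and then extended by the grading via Lemma 1.3, then yields $[a,A]=0$, i.e. $\overline{K^2}\subseteq Z$. Since $u,v\in[U,U]\subseteq K^2$ force $u\circ v\in\overline{K^2}\subseteq Z$, the identity $[u\circ v,w]=0$ is immediate. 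Thus I may assume $\overline{K^2}$ is dense, so that $A$ contains a nonzero ideal $I\subseteq\overline{K^2}$ and $[K,K]$ is correspondingly large.

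It remains to treat the dense case, and this is where the real work lies: I would manufacture from $U$ a genuine Lie ideal $L$ of $A$ — the natural choice being the Lie ideal of $A$ generated by $[U,U]$, or its trace on $I$ — to which Lemma 1.4 again applies. If the relevant commutators do not all vanish I would argue that $\overline U$, which contains $\overline{[U,U]}$, must then swallow a nonzero ideal of $A$, so that $U$ is dense; if they do vanish, the expansion above collapses to $[u\circ v,w]=0$. The main obstacle I anticipate is precisely this bridge: $U$ is only a Lie ideal of $[K,K]$, not of $A$, so transferring the density dichotomy of Lemma 1.4 from an honest Lie ideal of $A$ back to the subalgebra $\overline U$ — which is what the definition of density actually refers to — requires the $(M:S)$ quotient technology together with the semiprimeness of $A$, and is considerably more delicate than the $\overline{K^2}$ case. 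A secondary nuisance is the bookkeeping between the superbracket and the ordinary bracket flagged after Lemma 1.5, so that Lemma 1.1 is invoked only where one argument is even and the two brackets coincide.
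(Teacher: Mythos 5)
There is a genuine gap, and you identify its location yourself: the ``bridge'' you defer in the dense case is not a technical nuisance but the entire content of the lemma, and the route you sketch for it does not work. If $L$ is the Lie ideal of $A$ generated by $[U,U]$ (or by its trace on an ideal $I\subseteq\overline{K^2}$), then $L$ contains $[[U,U],A]$, which has no reason to lie in $\overline U$; so when Lemma 1.4 declares $L$ dense, the nonzero ideal it produces sits inside $\overline L$, not inside $\overline U$, and nothing about density of $U$ follows. In the other branch, $[L,L]=0$ does not make the expansion $[u\circ v,w]=u\circ[v,w]+(-1)^{\bar v\bar w}[u,w]\circ v$ collapse, because its terms are $\circ$-products of elements of $[U,U]$, which are not controlled by the vanishing of brackets. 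The paper needs no case split and never invokes Lemma 1.4 in this proof; it argues directly that $[u\circ v,w]\not=0$ forces $U$ dense. The two computations your sketch is missing are: (i) $[u\circ v,k]=u\circ[v,k]+(-1)^{\bar k\bar v}[u,k]\circ v\in\overline U$ for $k\in K$, since $[[U,U],K]\subseteq[U,[K,K]]\subseteq U$; and (ii) $[u\circ v,h]=[u,v\circ h]+(-1)^{\bar u\bar v}[v,u\circ h]\in U$ for $h\in H$, since $H\circ K\subseteq K$ and the same containment applies. Together these give $[u\circ v,A]\subseteq\overline U$, hence $[u\circ v,w]A\subseteq\overline U$, i.e.\ $[u\circ v,w]\in(\overline U:A)$; the identical computations also place $[u\circ v,A]$ in $\overline{K^2}$ and $\overline{[K,K]}$. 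A short calculus of the sets $(M:A)$ (the paper's steps 3--5: $A[u\circ v,w]A\subseteq\overline{K^2}$, $\overline K\cdot(\overline{[K,K]}:A)\subseteq(\overline{[K,K]}:A)$, $\overline{[K,K]}\cdot(\overline U:A)\subseteq(\overline U:A)$) then yields $A[u\circ v,w]A[u\circ v,w]A[u\circ v,w]A\subseteq\overline U$, and semiprimeness makes this ideal nonzero whenever $[u\circ v,w]\not=0$. This absorption argument is exactly what replaces your missing bridge, and it is where the hypothesis that $U$ is a Lie ideal of $[K,K]$ (not of $A$) is actually exploited.

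Two smaller corrections. First, $[u\circ v,w]\in[H,K]\subseteq H$, not $K$; the paper's list of containments has $[K,H]\subseteq H$. Second, your treatment of the case $[\overline{K^2},\overline{K^2}]=0$ is unsound as written but also unnecessary: Herstein's Lemma 1.1 concerns the ordinary bracket in an ordinary semiprime algebra, and for a super Lie ideal $L$ of $A$ the condition $[L,L]=0$ does not imply $L\subseteq Z$ (the exceptional Clifford case in Lemma 1.5 is precisely the obstruction, and Lemma 1.3 is stated only for prime superalgebras), so your claim $\overline{K^2}\subseteq Z$ is not justified. It is also not needed: since $u\circ v\in K^2$ and $w\in U\subseteq[K,K]\subseteq K^2$, the hypothesis $[\overline{K^2},\overline{K^2}]=0$ gives $[u\circ v,w]=0$ immediately. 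But once the paper's direct argument is in place, this case distinction disappears altogether.
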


\begin{proof}[Proof:]
We present the proof of this in six steps. Let $u,v \in [U,U], w\in U$.

\medskip

1. $[u\circ v, w] \in (\overline U:A)$ . We have 
 $$[u\circ v, k]= u\circ [v,k] + (-1)^{{\bar k}{\bar v}} [u,k]\circ v\in \overline{U}$$
{\noindent for every homogeneous elements
$u,v \in [U,U]$ and $k\in K$, because }
$$[[U,U], K] \subseteq [U,[U,K]]\subseteq [U,[K,K]]\subseteq U.$$
{\noindent  And also for every homogeneous elements $u,v\in [U,U]$ and $h\in H$ we get}
$$[u\circ v,h]=[u,v\circ h]+(-1)^{\bar u \bar v}[v,u\circ h]\in  U,$$
{\noindent because $K\circ H\subseteq K$. Since $A= H\oplus K$ it
follows that  $[u\circ v, A]\subseteq \overline U$ for every
homogeneous elements $u,v\in [U,U]$. But for every homogeneous elements $a\in A$, $w\in U$}
$$ [u\circ v, wa]=[u\circ v, w]a + (-1)^{({\bar u}+{\bar v}){\bar w}} w[u\circ v,a]$$
{\noindent and so $[u\circ v,w] A\subseteq \bar U$, that is, $[u\circ v,w]\in
 (\bar U:A)$.
 
 \medskip
 
 2.  $[u\circ v, A]\subseteq \  \overline {K^2}, \overline {[K,K]}$ and $[u\circ v, w]\in (\ \overline{K^2}:A),  (\ \overline {[K,K]}:A)$.  We notice that from the above equations we can also deduce that $[u\circ v,A] \subseteq 
\overline {[K,K]}$ and  that $[u\circ v,w]\in(\overline {[K,K]}:A)$}.

\medskip

3. $A [u\circ v, w] A \subseteq \overline{ K^2}$.  We claim that $A[u\circ v, w] \subseteq (\overline { K^2}: A)$. Let  $a, b\in A$ homogenous elements, then
$$  a[u\circ v, w]b=[a,[u\circ v, w]b]+(-1)^{ (\bar u + \bar v + \bar w) \bar a + \bar b \bar a} [u\circ v, w]ba\subseteq \overline {K^2},$$
{\noindent because of step 2 and because $\overline {K^2}$ is a Lie ideal of $A$.}

\medskip

 4.  $\overline K .  (\overline {[K,K]}: A) \subseteq (\overline {[K,K]}: A)$.   Let $k\in K, x\in (\overline {[K,K]}:A), a \in A$ homogeneous elements, then
$$(kx)a=[k,xa]+(-1)^{(\bar x +\bar a)\bar k}(xa)k\in \overline{[K,K]},$$
{\noindent because $x\in (\overline { [K,K]}:A)$ and because  if $l, m \in [K,K]$ are homogeneous elements then from  (1)  }
$$[k,lm] = [k,l]m + (-1)^{\bar k \bar l} l[k,m] \in \overline {[K,K]}.$$

5.  $\overline {[K,K]}. (\bar U:A) \subseteq (\bar U:A)$. It is the same proof as in step 4. Let $k\in [K,K], x\in (\bar U:A), a \in A$ homogeneous elements, then
$$(kx)a=[k,xa]+(-1)^{(\bar x +\bar a)\bar k}(xa)k\in \bar U,$$
{\noindent because $x\in (\bar U:A)$ and because if $l, m \in U$ are homogeneous elements then  }
$$[k,lm] = [k,l]m + (-1)^{\bar k \bar l} l[k,m] \in \overline {U},$$
{\noindent since $U$ is a Lie ideal of $[K,K]$.}

\medskip

6. $A [u\circ v, w] A [u \circ v, w] A [u \circ v, w] A \subseteq \bar U$. From steps 1--5 we deduce that
$$A[u\circ v, w]A[u\circ v, w]A[u\circ v, w]A\subseteq \overline {K^2} (\overline {[K,K]}: A) A (\bar U: A) A \subseteq \overline {[K,K]} ({\overline U}:A) A\subseteq \overline U .$$
{\noindent So, if $[u\circ v, w] \not= 0$, since $A$ is semiprime,  $0\not= J= A[u\circ v, w]A[u\circ v, w]A[u\circ v,w]A \subseteq \bar U$, and then $U$ is dense in $A$.}

\end{proof}

\bigskip

We note that the ideal contained in $\bar U$ in the above Lemma, $J=A[u\circ v,w]A[u\circ v,w]A[u\circ v, w]A$, is also a $*$-ideal, that is, $J^*\subseteq J$.

\begin{theo}
Let $A$ be a semiprime superalgebra with superinvolution, then either $K$ is dense or $A$ satisfies $S(2)$.
\end{theo}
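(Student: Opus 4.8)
The plan is to run the whole argument through the Lie ideal $\overline{K^2}$, for which we may invoke the two dichotomy results of the introduction. Set $W=\overline{K^2}$. As recorded just before the statement, $W$ is a Lie ideal of $A$, so Lemma 1.4 applies and gives two cases: either $W$ is dense in $A$, or $[W,W]=0$. If $W$ is dense, then by definition $\overline{W}=W$ contains a nonzero ideal $J$ of $A$; since $K^2\subseteq \overline K$ we have $W\subseteq \overline K$, hence $J\subseteq \overline K$ and $K$ is dense. This is the first alternative, so from now on I would assume $[W,W]=0$ and aim to prove that $A$ satisfies $S(2)$.

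To reach $S(2)$ I would pass to prime images. Because $A$ is semiprime it is a subdirect product of its prime images $\bar A=A/P$, and the condition $[W,W]=0$ descends: the image $\bar W$ of $W$ is a Lie ideal of the prime superalgebra $\bar A$ with $[\bar W,\bar W]=0$. Applying Lemma 1.5 to $\bar W$ I get, in each nontrivial prime image, that either $\bar W\subseteq Z(\bar A)$ or $\bar A$ is a central order in a $4$-dimensional Clifford superalgebra. The exceptional case is already of dimension at most $4=2^2$ over its center, so it contributes exactly a factor of the kind required by $S(2)$; note also that a dense $\bar W$ cannot genuinely occur, since a dense Lie ideal with $[\bar W,\bar W]=0$ would contain a nonzero commutative ideal and force the prime $\bar A$ to be commutative, a special instance of the central case.

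It remains to treat the central case $\bar W\subseteq Z(\bar A)$, which unwinds to the statement that every product of two (images of) skew elements is central. Since $\overline{K^2}\subseteq Z$ forces both $K\circ K\subseteq Z$ and $[K,K]\subseteq Z$, the skew elements satisfy quadratic, Clifford-type relations over the center, so $\overline K$ is spanned by the center together with $K$ and collapses to a low-dimensional algebra; feeding this back through the bracket relations $[K,H]\subseteq H$, $[H,H]\subseteq K$ and $H\circ K\subseteq K$, and using Lemma 1.3 to push odd elements that centralize ideals of $\bar A_0$ into $Z(\bar A)$, I would control the symmetric part as well and conclude $\dim_{Z(\bar A)}\bar A\le 4$. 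Together with the Clifford case this shows that every prime image of $A$ is a central order in a simple superalgebra at most $4$-dimensional over its center, that is, $A$ satisfies $S(2)$.

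The main obstacle is exactly this last implication, ``all products of skew elements central $\Rightarrow$ central dimension at most $4$''. Two points need care. First, Lemma 1.5 is stated for nontrivial prime superalgebras, so the prime images with zero odd part must be handled separately, where I would fall back on the classical ungraded Herstein theory for $K$ (of which Lemma 1.1 is a part). Second, the reduction as I have phrased it uses the superinvolution on the quotient to speak of skew and symmetric elements, whereas an arbitrary prime ideal $P$ need not be $*$-invariant; I would therefore run the decomposition through the $*$-prime $*$-ideals, whose intersection is again zero by semiprimeness, so that each quotient carries an induced superinvolution. In the ``exchange'' $*$-prime case $\bar A\cong B\oplus B^{*}$ the skew elements are $\{(b,-b^{*})\}$, centrality of $\bar W$ becomes $B^2\subseteq Z(B)$, which forces the prime factor $B$ to be commutative and hence small, so these factors are compatible with $S(2)$ as well.
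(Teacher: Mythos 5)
Your first half coincides with the paper's proof: $K^2$ (hence $\overline{K^2}$) is a Lie ideal of $A$, Lemma 1.4 yields the dichotomy, and in the dense case $\overline{K^2}\subseteq\overline K$ gives at once that $K$ is dense. The gap is in the second case. The paper settles $[K^2,K^2]=0$ in one line by invoking Theorem 1.1 of \cite{L-Sa}, which is precisely the statement that a semiprime superalgebra with superinvolution satisfying this identity verifies $S(2)$; that external theorem carries all the weight of the second alternative. You attempt to reprove it, and your sketch stops exactly where the difficulty begins --- as you yourself concede. The implication ``$\overline{K^2}$ central in a prime image $\Rightarrow\ \dim_{Z}\bar A\le 4$'' is asserted, not proved: the containments $[K,H]\subseteq H$, $[H,H]\subseteq K$, $H\circ K\subseteq K$ place no visible bound on the symmetric part, and knowing that $\overline K$ collapses to a Clifford-type algebra says nothing a priori about the size of $H$, which is where all the work lies. (In the ungraded analogue this bound is the Herstein--Baxter--Lanski theory of commutative $\overline{K^2}$, itself a substantial theorem and not a corollary of Lemma 1.1; in the graded case it is, again, exactly the content of \cite{L-Sa}.) So what you have is a correct reduction followed by an unproved claim that happens to be the theorem the paper cites.

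There is also a structural problem in your passage to quotients. $S(2)$ is defined by a condition on \emph{every prime image} $A/P$, and a prime ideal $P$ need not be $*$-invariant; for $P^*\neq P$ the quotient $A/P$ carries no superinvolution, so ``products of skew elements of $A/P$'' is not meaningful --- only the image of $K$ is available. Your proposed repair, decomposing through $*$-prime $*$-ideals and treating the exchange case $B\oplus B^*$, establishes facts about quotients that are $*$-prime but in general not prime, so it does not verify the defining condition of $S(2)$; a transfer step back to each $A/P$ is still missing. The argument that actually works (and is the pattern used in \cite{L-Sa} and in Sections 3 and 4 of this paper) goes prime by prime: if $P^*\neq P$, then $(P+P^*)/P$ is a nonzero ideal of $A/P$ contained in $(K+P)/P$, so when $[K^2,K^2]=0$ its square is a nonzero commutative ideal of the prime superalgebra $A/P$, forcing $A/P$ to be commutative; if $P^*=P$, the superinvolution and the hypothesis descend to $A/P$ and one argues there, including the trivial case $(A/P)_1=0$, which requires the classical ungraded theorem rather than Lemma 1.1 alone. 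Without this bookkeeping and, above all, without a genuine proof of the dimension bound in the central case, the second alternative of the theorem remains unestablished.
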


\begin{proof}[Proof:]
Consider the Lie ideal of $A$,  $K^2$. From Lemma 1.4, either $K^2$ is dense in $A$, or $[K^2,K^2]=0$. In the first case,  $K$ is dense in $A$, clearly. In the second case, by Theorem 1.1 in \cite{L-Sa}, $A$ satisfies $S(2)$.
\end{proof}

\begin{lem}
Let $A$ be semiprime, and let $U$ be a Lie ideal of $[K,K]$ such that $[u\circ v, w]=0$ for every $u,v \in [U,U], w\in U$.  Then

\begin{enumerate}

\item[{\rm (i)}] $u\circ v\in Z$ for every $u,v \in [U,U]_i$.

\item[{\rm (ii)}] $(u\circ v)^2=0$ for every $u\in [U,U]_0, v\in [U,U]_1$.

\item[{\rm (iii)}] $u\circ v=0$ for every $u,v\in [U,U]_1$.

\end{enumerate}

\end {lem}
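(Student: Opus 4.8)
The three parts share a common backbone, so I would set that up first. Write $t=u\circ v$. Using $u^{*}=-u$, $v^{*}=-v$ and $(ab)^{*}=(-1)^{\bar a\bar b}b^{*}a^{*}$, one checks in each case that $t^{*}=t$, so $t\in H$; its degree is $\bar u+\bar v$, hence $t\in H_{0}$ in (i) and (iii) and $t\in H_{1}$ in (ii) (so that $t^{2}\in K_{0}$ is even and skew). The hypothesis $[t,w]=0$ for $w\in U$ says exactly that $t$ centralizes $U$. I would then combine this with two facts already available: first, the proof of Lemma 2.1 gives $[t,A]\subseteq\overline U$ for all homogeneous $u,v\in[U,U]$; second, since $x\mapsto[t,x]$ is a (super)derivation of $A$, the vanishing on the generators $U$ propagates to $[t,\overline U]=0$.

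For (i), $t$ is even, so $x\mapsto[t,x]$ is an ordinary derivation and the previous two facts give $[t,[t,A]]\subseteq[t,\overline U]=0$. Taking $a=t$ and $L=A$ in Herstein's Lemma 1.1 (recall $A$ is semiprime), I conclude $[t,A]=0$, i.e. $t\in Z$. This handles (i) for both $i=0$ and $i=1$.

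For (ii) the element $t$ is odd, so $x\mapsto[t,x]$ is an odd superderivation; the same two facts still give $[t,[t,A]]=0$, but now a short computation unwinds the iterated superbracket to $[t,[t,a]]=[t^{2},a]$, whence $[t^{2},A]=0$ and $t^{2}\in Z$. Since $t^{2}\in K_{0}$ is also skew, it remains to force $t^{2}=0$. For this I would pass to the prime images $A/P$ (whose intersection is zero), in each of which $t^{2}$ lies in the center, and apply Montaner's Lemma 1.3 together with the dense/central dichotomy for $U$ to show $t^{2}\in P$; semiprimeness then yields $t^{2}=0$ in $A$.

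Part (iii) is the case $i=1$ of (i), so I already have $u\circ v\in Z$, and the task is to upgrade ``central'' to ``zero''. Since $u\circ v$ is central it suffices, by semiprimeness, to show $(u\circ v)^{2}=0$, and I would obtain this by feeding into semiprimeness the auxiliary elements $u^{2},v^{2}\in[U,U]_{0}$: part (i) makes $u^{2}\circ v^{2}$ central, part (ii) applied to the odd/even pair forces $(u\circ v^{2})^{2}=0$, and the centrality relations $u\circ v^{2}=2vuv$, $u^{2}\circ v=2uvu$ express $(u\circ v)^{2}$ through these degenerate quantities. The genuine difficulty throughout is concentrated in the vanishing statements (ii) and (iii): unlike (i), these are not derivation identities but assertions of degeneracy, and I expect the only robust route is the reduction to prime images and the use of Montaner's Lemma 1.3, where one must separately dispose of the prime images in which $U$ is dense and those in which it is central, and reconcile a skew central square with actual nilpotency. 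Gluing these prime-by-prime conclusions back to $A$ via semiprimeness is then routine.
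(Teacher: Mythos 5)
Your part (i) is correct and is exactly the paper's own argument (step 1 of Lemma 2.1, the superderivation remark, then Herstein's Lemma 1.1 with $L=A$). The genuine gap is in part (ii). Your derivation of $t^{2}\in Z$ via $[t,[t,a]]=[t^{2},a]$ is fine, but the plan for upgrading $t^{2}\in Z\cap K_{0}$ to $t^{2}=0$ does not work: Montaner's Lemma 1.3 concludes that the square of an odd element is \emph{central} in a prime superalgebra, never that it vanishes, so it cannot produce $t^{2}\in P$; a central skew element of a semiprime superalgebra need not be zero at all (for superinvolutions of the second kind $Z_{K}\neq 0$, and the superinvolution does not even descend to $A/P$ unless $P^{*}=P$); and the ``dense/central dichotomy for $U$'' in prime images is precisely what Sections 3 and 4 of the paper establish \emph{using} Lemma 2.3 as a hypothesis (Theorem 3.11, Corollary 3.13 and Lemmata 4.1--4.2 all assume conclusions (i)--(iii)), so invoking it inside the proof of Lemma 2.3 is circular. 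What you missed is that the hypothesis kills $t^{2}$ outright, with no detour through $Z$: since $u,v\in[U,U]\subseteq U$, the hypothesis gives $[t,u]=[t,v]=0$, hence by identity (3), $[t,t]=[t,u\circ v]=[t,u]\circ v+(-1)^{\bar t\bar u}\,u\circ[t,v]=0$; but $t$ is odd, so $[t,t]=2t^{2}$, and $1/2\in\phi$ yields $t^{2}=0$. This two-line identity is the paper's entire proof of (ii), and no prime images appear anywhere in Lemma 2.3.

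For part (iii) your route genuinely differs from the paper's, and, perhaps surprisingly, it can be completed --- but only once (ii) is repaired, since it consumes $(u^{2}\circ v)^{2}=(v^{2}\circ u)^{2}=0$, and it also needs $u^{6}=v^{6}=0$ (that is, (ii) applied to the pairs $(u^{2},u)$, $(v^{2},v)$), which your sketch omits. Concretely: with $\gamma=u\circ v\in Z$ and $c=u^{2}\circ v^{2}\in Z$, the relations $2uvu=u^{2}\circ v$, $2vuv=v^{2}\circ u$ give $\gamma^{2}=\tfrac12 c-d$ where $d=vu^{2}v=uv^{2}u$, so $d\in Z$; then $(uvu)^{2}=0$ reads $du^{2}=0$ and $(vuv)^{2}=0$ reads $dv^{2}=0$, whence $d^{2}=dc=0$, $\gamma^{4}=\tfrac14 c^{2}$, $c^{2}=u^{4}v^{4}+v^{4}u^{4}$, and $c^{3}=0$ using $u^{6}=v^{6}=0$; thus $\gamma^{8}=0$, and centrality plus semiprimeness force $\gamma=0$. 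The paper instead extracts from further instances of the hypothesis (with $w=u^{2}$, and with the pair $([u,u],[v,v])$) the stronger relations $\gamma uvu=\gamma vuv=\gamma u^{3}=\gamma v^{3}=0$ and concludes $\gamma^{4}=0$ directly. So, as submitted: (ii) is a genuine gap, and (iii) is a sketch resting on it; (ii) is fixed by the one-line identity above, and (iii) would need the computation just indicated to be written out.
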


\begin{proof}[Proof:]
From step 1  and its proof in Lemma 2.1,  we know that $[u\circ v, h] \in U$ and $[u\circ v, k] \in \bar U$ for every homogeneous elements $u,v \in [U,U], h\in H, k \in K$. Therefore $[u\circ v, a]\in \overline U$ for every $a\in A$.  So, from (1),  $[u\circ v, [u\circ v, a]]=0$. Now,  if $u\circ v$ is even, we obtain from Lemma 1.1   that $u\circ v\in Z$ and we have (i). And if $u\circ v$ is odd, then, from (4),
 $$[u\circ v, u\circ v]= (-1)^{\bar u \bar u+ \bar u \bar v} u\circ [u\circ v, v]+ [u\circ v, u] \circ v=0,$$
 {\noindent  that is, $(u\circ v)^2=0$, and we have (ii). 
 
 Now, suppose that $\gamma = u\circ v$ with $u, v \in [U,U]_1$.  Then}
\begin{eqnarray*}
\gamma (u^2 \circ v)&=& u^2 \circ \gamma v = \frac {1}{2} (u^2 \circ ((u\circ v) \circ v)= -\frac {1}{2} ( u^2 \circ [ v^2, u ])\\&=& -\frac {1}{2}( [u^2 \circ v^2, u])  = -\frac {1}{2}( [[u,u]\circ [v,v], u])=0,
\end{eqnarray*}
{\noindent  because $\gamma \in Z$, because of the hypothesis and from (3). A similar argument shows that $\gamma (v^2\circ u)=0$.  Notice that $0=[u\circ v, u]=[uv-vu,u]=uvu-vu^2-u^2v+uvu$, and so $2uvu= u^2\circ v$. Therefore $\gamma (uvu)=0$. And since we can also prove that $2vuv= v^2\circ u$,  it  is  deduced that  $\gamma (vuv) =0$. Now we observe that }
$$2\gamma u^3= \gamma u \circ u^2=\frac {1}{2} ((u\circ v)\circ u)\circ u^2= \frac {1}{2} [u^2,v]\circ u^2= \frac {1}{2}[u^2, v\circ u^2]=0$$
{\noindent because of the hypothesis and from (4). And the same $\gamma v^3=0$. Notice that}
\begin{eqnarray*}
\gamma ^2 &=& (u\circ v) (u\circ v)= (\gamma u \circ v) = 1/2 (\gamma \circ u) \circ v = 1/2 ((u\circ v) \circ u) \circ v \\&=& 1/2 ([u^2,v] \circ v)= 1/2 (-[u^2\circ v, v] + u^2 \circ [v,v])= 1/2 (u^2 \circ  v^2),
\end{eqnarray*}
{\noindent and so finally}
\begin{eqnarray*} \gamma ^4 &=& \gamma \gamma \gamma ^2= {1\over 2}\gamma (u\circ v) (u^2\circ v^2) ={1\over 2}\gamma (uv-vu) (u^2 v^2+v^2u^2)\\&= &{1\over 2}\gamma (uvu^2v^2+uv^3u^2-vu^3v^2-vuv^2u^2)=0, 
\end{eqnarray*}

{\noindent because  $\gamma uvu = \gamma vuv = \gamma u^3 = \gamma v^3=0$. So, since $A$ is semiprime, we obtain that $\gamma =0$ and we get (iii)}
\end{proof}

\bigskip

In the following two sections we deal with the second case of Lemma 2.1, that is, when $[u\circ v,w]=0$ for every $u,v\in [U,U], w\in U$, and we will study the prime images of $A$. If $P$ is a prime ideal of $A$ we have two posible situations: either $P^*\not= P$ or $P^*=P$.

\bigskip

\section {Prime images of Lie ideals  when $P^*\not= P$.}

\bigskip

Let $P$ be a prime ideal of $A$. We will suppose first that $P^*\not= P$. In this case $(P^*+P)/P$ is
a nonzero proper ideal of $A/P$ and we claim that $(P^*+P)/P\subseteq (K+P)/P$. Indeed, if
$y\in P^*$ then $y+P=(y-y^*)+ y^*+P\in (K+P)/P$. Also if $U$ is a Lie ideal of
$[K,K]$ we have that $(U+P)/P$ is a $\phi$-submodule of $A/P$ and satisfies
$$[(U+P)/P,[(P^*+P)/P, (P^*+P)/P]]\subseteq ([U,[K,K]]+P)/P\subseteq (U+P)/P.$$

Of course if $u\circ v\in Z$ for every $u,v \in [U,U]_0$,  $u \circ v =0$ for every $u,v\in
[U,U]_1$,  and $(u\circ v)^2 =0$ for every $u\in [U,U]_0, v\in [U,U]_1$, then the same property is satisfied in $A/P$, that is, $(u+P) \circ (v+P) \in Z_0 (A/P)$ for
every $u+P, v+P \in ([U,U]_0+P)/P$, $(u+P)\circ (v+P)=0$ for every $u+P, v+P\in  ([U,U]_1+P)/P$, and $((u+P)\circ (v+P))^2=0$ for every $u+P \in ([U,U]_0+P)/P, v+P \in ([U,U]_1 + P)/P$.  Let us analyze this situation.  We notice that the assumption that $A/P$ has a superinvolution is not required. We state first some useful Lemmata.

\begin{lem}
Let $A$ be a prime superalgebra, $I$ a nonzero ideal of $A$,  then either $[I,I]$ is dense in $A$,  or $A$ is a central order in a 4-dimensional  Clifford superalgebra, or $A$ is commutative.
\end{lem}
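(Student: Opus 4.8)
The plan is to apply Lemma 1.5 to the Lie ideal $[I,I]$ of $A$. First I would check that $[I,I]$ really is a Lie ideal: since $I$ is an ideal it is a Lie ideal, i.e. $[I,A]\subseteq I$, and the super Jacobi identity then gives $[[I,I],A]\subseteq[I,I]$. Lemma 1.5 applied to $[I,I]$ splits into three cases. If $A$ is a central order in a $4$-dimensional Clifford superalgebra, or if $[I,I]$ is dense in $A$, then we are already in one of the desired alternatives. The whole problem therefore reduces to the remaining possibility $[I,I]\subseteq Z$, where I must show that $A$ is commutative.

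For that case I would prove a graded version of Herstein's theorem that a prime ring with a nonzero ideal all of whose commutators are central is commutative. First, $I_0\neq 0$: otherwise $I=I_1$ forces $I^2\subseteq I_0=0$, contradicting semiprimeness. Since $Z\subseteq A_0$ while $[I_0,I_1]\subseteq A_1$, parity forces $[I_0,I_1]\subseteq Z\cap A_1=0$. Next, for $x\in I_0$ one has $[x,[x,I_0]]\subseteq[x,Z]=0$, because $[I_0,I_0]\subseteq Z$ is central and $x\in A_0$; applying Lemma 1.1 inside the semiprime algebra $A_0$ (Lemma 1.2), with the Lie ideal $L=I_0$ and $a=x$, yields $[x,I_0]=0$, whence $[I_0,I_0]=0$. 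Combining these, $[I_0,I]=0$.

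Now I would upgrade ``centralizing $I$'' to ``central in $A$''. Using $[I_0,I]=0$ and identity (1), for $x,y\in I_0$ and $a\in A_0$ the element $[x,ya]=y[x,a]$ lies in $[I_0,I_0]=0$, so $I_0[x,a]=0$; since $[x,a]\in I_0$ and $I_0\cap\mathrm{ann}(I_0)=0$ in the semiprime algebra $A_0$, this gives $[x,a]=0$, i.e. $I_0\subseteq Z(A_0)$. For the odd direction, each $x_1\in I_1$ centralizes the nonzero ideal $I_0$ of $A_0$, so Lemma 1.3(i) gives $I_1\subseteq Z(A)$. Feeding this back, for $x\in I_0$ and $a_1\in A_1$ set $w=[x,a_1]$; then $w\in I_1\subseteq Z(A)$, and by identity (1) $yw=[x,ya_1]\in[I_0,I_1]=0$ for all $y\in I_0$, so $I_0w=0$. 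Since $w^2\in I_1I_1\subseteq I_0$ we get $w^3=0$, and a central nilpotent element of a semiprime superalgebra must vanish, so $w=0$; hence $[I_0,A_1]=0$. Therefore $I_0\subseteq Z(A)$, and together with $I_1\subseteq Z(A)$ the whole ideal $I$ is central.

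Finally, with $I\subseteq Z(A)$ and $I$ a nonzero ideal of the prime superalgebra $A$, for homogeneous $a,b\in A$ and $x\in I$ identity (2) gives $[ax,b]=[a,b]x$ up to sign, and $ax\in I$ is central so $[ax,b]=0$; thus $[A,A]\,I=0$, and primeness forces $[A,A]=0$, i.e. $A$ is commutative. I expect the main obstacle to be exactly this passage through the odd part: the classical prime-ring argument does not transfer verbatim, because $I_0$ is only an ideal of the merely semiprime algebra $A_0$ and is not one-sided in $A$, so controlling $[I_0,A_1]$ cannot be done by a single annihilator computation but requires Lemma 1.3 together with the nilpotency of odd central elements.
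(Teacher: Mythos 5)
Your proof is correct, and it shares the paper's skeleton at both ends: both apply Lemma 1.5 to the Lie ideal $[I,I]$, reduce to the case $[I,I]\subseteq Z$, observe $[I_0,I_1]\subseteq Z\cap A_1=0$ by parity, invoke Lemma 1.3(i) to place $I_1$ in $Z(A)$, and finish by hitting $(ab-ba)I=0$ with primeness of $A$. Where you genuinely diverge is the middle step, proving that $I_0$ is central. The paper first shows that every nonzero ideal of $A$ has nonzero odd part (so $I_1\neq 0$), forms the auxiliary ideal $J=I_0I_1+I_1^2$ (an ideal of $A$ precisely because $I_1\subseteq Z(A)$), checks $J\neq 0$, and notes $[I_0,J]=0$ --- the point being that $I_0I_1\subseteq I_1$, which already commutes with $I_0$, while $[x,I_1^2]=0$ follows from (1); a single primeness argument, $(xa-ax)J=0$, then yields $I_0\subseteq Z$. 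You instead work inside $A_0$: Herstein's Lemma 1.1 kills $[I_0,I_0]$, the annihilator fact $I_0\cap\mathrm{ann}(I_0)=0$ in the semiprime algebra $A_0$ gives $I_0\subseteq Z(A_0)$, and the odd direction is handled by a central-nilpotent trick ($w=[x,a_1]\in I_1\subseteq Z(A)$, $I_0w=0$, $w^3=0$, hence $w=0$). Your route uses more machinery (Lemma 1.1 plus two annihilator/nilpotency arguments) but never needs $I_1\neq 0$, and it makes explicit the hypothesis $I_0\neq 0$ required by Lemma 1.3(i), which the paper leaves implicit; the paper's route is shorter, needing neither Herstein's lemma nor any computation inside $A_0$.

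One point you should state more carefully: in your final paragraph the brackets must be read as ordinary (ungraded) brackets and ``central'' as ungraded centrality, which is what Lemma 1.3(i) delivers. An ungraded-central odd element $x$ does \emph{not} satisfy $[x,b]=0$ for the superbracket when $b$ is odd (there $[x,b]=2xb$), so identity (2), the super Leibniz rule, is not the right tool and ``up to sign'' does not repair it. The ordinary Leibniz rule $[ax,b]=a[x,b]+[a,b]x$, or the paper's direct computation $(ab)y=(by)a=(yb)a=(ba)y$, gives $(ab-ba)I=0$ and hence ordinary commutativity, which is what the lemma asserts. With that reading fixed, your argument is complete.
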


\begin{proof}[Proof:]

We notice that  $[I,I]$ is a Lie ideal of $A$, and from Lemma 1.5 it follows that either $[I,I]$ is dense in $A$, or $A$ is a central order in a 4-dimensional Clifford superalgebra, or $[I, I]\subseteq Z$. Suppose that $[I,I]\subseteq Z$, then $[I_0,I_1]=0$. But then, from Lemma 1.3 (i) we deduce that $I_1\subseteq Z_1(A)$. We observe that $I_1 \not= 0$ because  if $I=I_0$  then $I. (A_1 + A_1^2)=0$, a contradiction  with the primeness of $A$. Therefore $I=I_0+I_1 $ with $I_1 \not= 0$, and this is satisfied for every nonzero ideal of $A$.  Let $J=I_0I_1+I_1^2$. Since $I_1\subseteq Z_1(A)$, $J$ is anideal of $A$. Also $J\not=0$, because if $J=0$, then $0\not= I^2=I_0^2$ by primeness, and $(I^2)_1=0$, a contradiction. Since $I_1 \subseteq Z_1(A)$,  we get $[x,J]=0$  for every $x\in I_0$, because of (1). Therefore for every $a\in A$ and $y\in J$ we have $(xa)y=(ay)x=(ax)y$, that is, $(xa-ax) J=0$, and since $A$ is prime we deduce that $xa = ax$ for every $a\in A$, so $I_0\subseteq Z$, and then $I\subseteq Z(A)$. Now is easy to prove that $A$ is commutative. For every homogeneous elements $a, b \in A$ and $y\in I$ it follows that
$$ (ab)y= (by)a=(yb)a=(ba)y,$$
{\noindent and by the primeness of $A$,  $ab= ba$ for every homogeneous elements $a, b \in A$.}

\end {proof}

\begin{lem}
Let $A$ be a prime superalgebra,  $L$  a Lie ideal of $A$ such that $L$ is dense in $A$, and $v\in A_i$ such that   $vLv=0$, then $v=0$.
\end{lem}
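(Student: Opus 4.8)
The plan is to upgrade the hypothesis $vLv=0$ to the much stronger statement $v\overline{L}\,v=0$, and then let the primeness of $A$ do the rest. Throughout I assume $v\neq 0$ and aim for a contradiction, since it is this assumption that licenses the primeness cancellations below. Granting $v\overline{L}\,v=0$ for the moment, density gives a nonzero (graded) ideal $I\subseteq\overline{L}$, so $vIv\subseteq v\overline{L}\,v=0$. Because $I$ is an ideal we have $i_1ai_2\in I$ for $i_1,i_2\in I,\ a\in A$, whence $vIAIv\subseteq vIv=0$, i.e. $(vI)\,A\,(Iv)=0$. Primeness then forces $vI=0$ or $Iv=0$; in either case $v$ lies in a one-sided annihilator of the nonzero ideal $I$. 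Such annihilators are zero in a prime ring (if $vI=0$ then $vAi\subseteq vI=0$ for every $i\in I$, so primeness gives $v=0$), contradicting $v\neq0$. So the entire problem reduces to establishing $v\overline{L}\,v=0$.

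The heart of the matter is thus to show $vl_1\cdots l_nv=0$ for all homogeneous $l_1,\dots,l_n\in L$, since such products span $\overline{L}$. I will prove this by induction on $n$, the case $n=1$ being exactly $vLv=0$. For the inductive step I use that $L$ is a Lie ideal by bracketing $l_1$ against the auxiliary element $b=l_2\cdots l_nva\in A$, where $a\in A$ is arbitrary: since $[l_1,b]\in[L,A]\subseteq L$, the hypothesis yields $v[l_1,b]v=0$, and expanding the superbracket gives
\[
0=v[l_1,b]v=\bigl(vl_1\cdots l_nv\bigr)av-(-1)^{\bar{l}_1\bar b}\bigl(vl_2\cdots l_nv\bigr)al_1v .
\]
The second summand vanishes because $vl_2\cdots l_nv=0$ by the induction hypothesis (it involves only the $n-1$ factors $l_2,\dots,l_n$). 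Hence $\bigl(vl_1\cdots l_nv\bigr)\,A\,v=0$, and primeness together with $v\neq0$ forces $vl_1\cdots l_nv=0$, completing the induction.

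The step I expect to be the crux is precisely the choice of $b$: appending the tail $va$ is what makes the second commutator term factor through $vl_2\cdots l_nv$ and disappear, converting the inductive claim into a clean annihilator identity $x\,A\,v=0$ to which primeness applies. The naive relation $v[l_1,l_2]v=0$ only gives $vl_1l_2v=(-1)^{\bar{l}_1\bar{l}_2}vl_2l_1v$, which is involutive and can never by itself produce a zero; the extra right-hand factor $v$ is essential. Two minor bookkeeping points remain to be checked, neither of which causes trouble: that $\overline{L}$ is generated by homogeneous elements of $L$, so the induction over homogeneous products suffices, and that the primeness cancellations used are the homogeneous form ``$xAy=0\Rightarrow x=0$ or $y=0$'' valid in a prime superalgebra, which is already invoked freely elsewhere in the paper.
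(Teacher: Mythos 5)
Your proof is correct and is essentially the paper's own argument: the paper likewise brackets a homogeneous $u\in L$ against elements of the form $u'va$ (and $u'u''va$, and so on) so that the unwanted commutator term factors through the shorter sandwich $vu'v=0$, concludes $v\overline{L}v=0$ by iterating, and then kills $v$ using a nonzero ideal $J\subseteq\overline{L}$ and primeness. The only cosmetic differences are that the paper phrases the cancellation via the square-zero right ideal $vuu'vA$ (using semiprimeness) instead of your direct use of the homogeneous implication $xAv=0\Rightarrow x=0$ under the standing assumption $v\neq 0$, and that it treats the cases of length $2$ and $3$ explicitly where you run a formal induction.
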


\begin{proof}[Proof:]
Let $u\in L_i$ and $a\in A$. Then $v[u,a]v=0$. Considering now $v[u,u^\prime va]v$ with $u^\prime \in L$, homogeneous,  we have $vuu^\prime vav=0$. Therefore $vuu^\prime vA$ is a right ideal with square zero, that is a contradiction with the primeness of $A$, so $vuu^\prime v=0$. In the same way considering $v[u, u^\prime u^{\prime \prime}va]v$ we obtain that $vuu^\prime u^{\prime \prime}v=0$. So, if $J$ is a nonzero ideal such that $J\subseteq \bar L$, we deduce that $vJv=0$ and, because of $A$ is prime, $v=0$.
\end{proof}

\begin{lem}
Let $A$ be a prime superalgebra, $L$  a Lie ideal of $A$ such that $L$ is dense in A,   and $V$  a Lie subalgebra of $A$ such that $[V,L]\subseteq V$. If $v^2=0$ for every $v\in V_i$,  then $V_i=0$.
\end{lem}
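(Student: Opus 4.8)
The plan is to reduce everything to Lemma 3.2. Concretely, I will show that $v\ell v=0$ for every $v\in V_i$ and every homogeneous $\ell\in L$, i.e. that $vLv=0$; then Lemma 3.2 forces $v=0$, and since $v\in V_i$ was arbitrary this gives $V_i=0$. The basic tool is the linearization of the hypothesis: since $V_i$ is a $\phi$-submodule and $(x+y)^2=x^2+xy+yx+y^2$, the condition $w^2=0$ for all $w\in V_i$ yields $xy+yx=0$ for all $x,y\in V_i$.

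Next I would manufacture square-zero elements inside $V_i$ and read off relations. Because $V$ is a Lie subalgebra and $[V,L]\subseteq V$, for $v\in V_i$ and $\ell\in L_0$ the element $[v,\ell]$ again lies in $V_i$, so $[v,\ell]^2=0$; expanding this and using $v^2=0$ yields the fundamental relation $v\ell v\ell v=0$, whose polarization is $v\ell vmv+vmv\ell v=0$ for all $\ell,m\in L_0$. When $v$ is even a second, independent relation is available: the identity $[[v,\ell],v]=2v\ell v$ (a direct expansion using $v^2=0$), together with $1/2\in\phi$, shows $v\ell v\in V_0=V_i$, so $v\ell v$ is itself square-zero; applying the linearized identity to the pair $v\ell v,\,[v,m]\in V_i$ produces the complementary relation $v\ell vmv=vmv\ell v$, and combining the two gives $v\ell vmv=0$ for all $\ell,m\in L_0$.

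From $v\ell vmv=0$ I would descend to $vL_0v=0$ by a square-zero one-sided ideal argument in the spirit of the proof of Lemma 3.2, working with $c=v\ell v$, which satisfies $cL_0v=0$, $vL_0c=0$ and $cL_0c=0$. Since $L_0$ is a Lie ideal of $A_0$, and $A_0$ is semiprime by Lemma 1.2 (and, in the decisive situation, prime with $L_0$ dense in it by Lemma 1.4), an argument parallel to Lemma 3.2 carried out inside $A_0$ forces $c=0$, that is, $vL_0v=0$.

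The step I expect to be the real obstacle is the interaction with the grading. The identity $[[v,\ell],v]=2v\ell v$ collapses to $0$ when $v$ is odd, so for $v\in V_1$ the complementary relation $v\ell vmv=vmv\ell v$ is no longer available and one is left only with the antisymmetric relation. Moreover the hypothesis produces square-zero elements only in the single component $V_i$, whereas bracketing against $L_1$ throws elements into $V_{i+1}$, about which nothing is assumed; this is precisely what makes passing from $vL_0v=0$ to the full $vL_1v=0$ (needed before Lemma 3.2 can be applied in $A$) delicate. I would resolve both difficulties by descending to the even part: using that $A_0$ is semiprime, and prime in the decisive case, by Lemma 1.2, and invoking Lemma 1.3 to control odd elements that centralize a nonzero ideal of $A_0$ and odd elements whose square is central. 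Feeding the even computation above through these structural facts should close both the odd case $i=1$ and the extension to the odd part of $L$, after which Lemma 3.2 delivers $v=0$ and hence $V_i=0$.
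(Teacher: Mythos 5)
Your overall target coincides with the paper's: prove $vLv=0$ and then invoke Lemma 3.2. But the engine you propose for producing square-zero elements --- bracketing $v$ only against elements of $L$ itself --- is too weak, and the two obstacles you name at the end are exactly where it breaks; no actual argument is offered to overcome them. The paper's proof rests on a device you never use: since $L$ is a Lie ideal of the whole superalgebra $A$, one may bracket $v$ against $[l_0,a_0]$ with $a_0$ a \emph{free variable} ranging over $A_0$, and against $[l_1,a_1]$ with $a_1$ free in $A_1$. Both commutators are \emph{even} elements of $L$ (the superbracket of two odd elements is even), so $[v,[l_0,a_0]]$ and $[v,[l_1,a_1]]$ lie in $V_i$ and are square-zero \emph{whatever the parity of $v$}. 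Multiplying the expanded square on the right by $v$ and substituting $a_0\mapsto a_0v$ (resp.\ $a_1\mapsto a_1v$) kills all terms but one and yields $(vl_0va_0)^3=0$ for every $a_0\in A_0$, and $(vl_1va_1)^3=0$ for every $a_1\in A_1$. Hence $vl_0vA_0$ and $vl_1vA_1$ are nil right ideals of bounded index of the semiprime algebra $A_0$, so Lemma 1.1 of \cite{H1} gives $vl_0v=0$ and $vl_1vA_1=0$; then $(vl_1v)(A_1+A_1^2)=0$ and primeness of $A$ give $vl_1v=0$. This single trick disposes simultaneously of your ``odd $v$'' problem and your ``$vL_1v$'' problem, because the free variable $a$ is planted \emph{inside} the square-zero element, which is what later produces a one-sided ideal.

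By contrast, your route has genuine gaps at three points. (a) For odd $v$ you concede that the complementary relation $v\ell vmv=vmv\ell v$ disappears, leaving only $v\ell vmv+vmv\ell v=0$; the closing claim that ``feeding the even computation through Lemmata 1.2 and 1.3 should close both'' is a hope, not an argument --- nothing in your relations exhibits an odd element centralizing a nonzero ideal of $A_0$, or an odd element whose square lies in the center of such an ideal, which is what Lemma 1.3 requires. (b) The passage from $vL_0v=0$ to $vL_1v=0$ is likewise never carried out: as you note, bracketing against $L_1$ lands in $V_{i+1}$, about which nothing is assumed, so your method generates no usable square-zero elements there. (c) Even your even-case endgame is unsupported: from $cL_0v=0$, $vL_0c=0$, $cL_0c=0$ (with $c=v\ell v$) you want ``an argument parallel to Lemma 3.2 carried out inside $A_0$'', but that would need $A_0$ prime and $L_0$ dense in $A_0$; Lemma 1.2 gives only that $A$ or $A_0$ is prime (not necessarily $A_0$), and Lemma 1.4 gives density of $L$ in $A$, not of $L_0$ in $A_0$. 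So even the case you treat most completely does not close, and the correct mechanism --- exploiting $[L,A]\subseteq L$ to insert a free variable before squaring --- is the missing idea.
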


\begin{proof}[Proof:]
Consider $l_0\in L_0$ and $a_0\in A_0$, then $[l_0,a_0]\in L$ and $[v,[l_0,a_0]]^2=0$ for every $v\in V_i$, that is 
$$(vl_0a_0-va_0l_0-l_0a_0v+a_0l_0v)^2v=0.$$
{\noindent  Expanding  yields}
$$vl_0a_0vl_0a_0v-vl_0a_0va_0l_0v-va_0l_0vl_0a_0v+va_0l_0va_0l_0v=0.$$
{\noindent Replacing $a_0$ by $a_0v$ gives $va_0vl_0va_0vl_0v=0$, and so,  $(vl_0va_0)^3=0$. Since $A_0$ is semiprime by Lemma 1.2,  it follows  from Lemma 1.1 in \cite {H1} that  $vl_0v=0$. Now let $l_1\in L_1$, we can prove in a similar way that $vl_1v=0$. Indeed, let $a_1 \in A_1$ and notice that $[v,[l_1,a_1]]^2=0$ and so}
$$(vl_1a_1+va_1l_1-l_1a_1v-a_1l_1v)^2v=0.$$
{\noindent Expanding and replacing $a_1$ by $a_1v$ give $va_1vl_1va_1vl_1v=0$, and so $(vl_1va_1)^3=0$. 
Therefore $vl_1vA_1$ is a right ideal of $A_0$. But $A_0$ is semiprime, by Lemma 1.2. So from Lemma 1.1 in \cite {H1} $vl_1vA_1=0$ and also $(vl_1v)(A_1+A_1^2)=0$. Since $A$ is prime we have $vl_1v=0$, and so $vLv=0$. Now, by Lemma 3.9, $V_i=0$.}
\end{proof}

Then, from now on, and until the end of this section, we will suppose that $A$ is a prime superalgebra, $I$ is a nonzero ideal of $A$ and $U$ is a  subalgebra of $A^-$ (that is, $U$ is a $\phi$-submodule of $A$ and $[U,U]\subseteq U$) such that   $[U, [I,I]]\subseteq U$. Moreover $U$ satisfies  the following conditions:  $u\circ v \in Z$ for every $u,v \in [U,U]_0$,  $u\circ v=0$ for
every $u,v\in [U,U]_1$ and $(u\circ v)^2=0$ for every $u\in [U,U]_0, v\in [U,U]_1$.  Now, we will consider the following set:
$$T=\{x\in A: [x,A]\subseteq U \}.$$

Since
 $$[[[U,[I,I]],[U,[I,I]]],A]\subseteq [[U,[I,I]],[[U,[I,I]],A]]\subseteq [[U,[I,I]],[I,I]]\subseteq U,$$

{\noindent we have $[[U,[I,I]],[U,[I,I]]]\subseteq T$. We notice that $T$ is a subring of $A$ because for every homogeneous elements $t,s\in
T$, from (2) } 
$$[ts,a]=[t,sa]+(-1)^{\bar t \bar s + \bar a \bar t}
[s, at]\in U.$$
{\noindent Let $T^\prime$ be the subring generated by $[[U,[I,I]],[U,[I,I]]]$. Since}
$$[[[U,[I,I]],[U,[I,I]]],[I,I]]  \subseteq [[U,[I,I]],[[U,[I,I]],[I,I]]] \subseteq [[U,[I,I]],[U,[I,I]]]$$ 
{\noindent  it follows that
$[T^\prime,[I,I]]\subseteq T^\prime$. We consider now two cases:

\begin{enumerate}

\item[{\rm (a)}]  $[T^\prime,[I,I]]=0$, 

\item[{\rm (b)}] $[T^\prime,[I,I]]\not=0$.

\end{enumerate}
 
We will suppose until the end of the section that  $A$ is neither commutative, nor a central order  in a 4-dimensional simple superalgebra, nor a central order in a 8-dimensional simple superalgebra. Hence, from Lemma 3.1,  we can suppose also  that  there exists  a nonzero ideal of $A$, $J$,  such that $J\subseteq \overline {[I, I]}$.
}

\bigskip

Lets go to consider the first case.

\bigskip

{\bf CASE (a): $[T^\prime, [I,I]]=0$}

\begin{lem}
In the above situation,  if $[T^{\prime}, [I,I]]=0$,  then $U\subseteq Z$.
\end{lem}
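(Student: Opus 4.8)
The plan is to prove $U\subseteq Z$ by first collapsing $T'$ into the center and then using the resulting centrality to force $[U,[I,I]]$ down to zero. The clean first step is to show $T'\subseteq Z$. Since $[T',[I,I]]=0$ and $T'$ is a subring, and since the super-centralizer of a set is closed under products (if $[t,a]=[t,b]=0$ then $[t,ab]=[t,a]b+(-1)^{\bar t\bar a}a[t,b]=0$ by (1)), every element of $T'$ super-centralizes the subalgebra $\overline{[I,I]}$, and hence the nonzero ideal $J\subseteq\overline{[I,I]}$ provided by Lemma 3.1. It then remains to observe that a homogeneous element super-centralizing a nonzero ideal $J$ of $A$ is central. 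For an even $t$ this follows from primeness: if $ty=yt$ for all $y\in J$, then for $a\in A$ one gets $y(ta-at)=0$, so $J[t,a]=0$, and since $xA[t,a]\subseteq J[t,a]=0$ for each $x\in J$, primeness gives $[t,a]=0$. For an odd $t$ one uses Lemma 1.3(i): here $t$ centralizes $J_0$, which is a nonzero ideal of $A_0$ (nonzero because $J^2\ne 0$ forces $J_0\ne0$), so $t\in Z(A)$. Thus $T'\subseteq Z$.

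Next, set $W=[U,[I,I]]$. Then $[W,W]\subseteq T'\subseteq Z$, and since $W\subseteq U$ we obtain the crucial stability $[W,[I,I]]\subseteq[U,[I,I]]=W$. Because $Z\subseteq A_0$, the odd component of $[W,W]$ vanishes; this gives $[W_0,W_1]=0$ and $W_1\circ W_1\subseteq Z$. In particular, for an odd $w\in W$ we have $w^2\in Z$, whence $[w,[w,a]]=[w^2,a]=0$ for all $a\in A$ by the super-Jacobi identity, i.e. $\mathrm{ad}_w^2=0$. Moreover $V:=W+Z$ is a Lie subalgebra satisfying $[V,V]\subseteq Z$ and $[V,[I,I]]\subseteq V$, and the Lie ideal $L=[I,I]$ is dense since $\overline{[I,I]}\supseteq J$.

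Now the plan is to push $W$ all the way to $0$. Using the dense Lie ideal $L=[I,I]$ together with Lemmas 3.2 and 3.3, the standing conditions on $[U,U]$, and the exclusion of the $4$- and $8$-dimensional Clifford superalgebras, I would treat the odd and even parts separately: upgrade $w^2\in Z$ to $w^2=0$ for odd $w\in W$ and apply Lemma 3.3 to $V$ to kill $W_1$, and then handle $W_0$ by exploiting that $\mathrm{ad}_w(W)\subseteq Z$ and $\mathrm{ad}_w^2([I,I])\subseteq Z$, reducing to $\mathrm{ad}_w([I,I])=0$ via Lemma 1.1 and the primeness arguments of Lemmas 3.2–3.3. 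This yields $W=[U,[I,I]]=0$. Finally, $W=0$ says precisely that $U$ super-centralizes $[I,I]$, hence $\overline{[I,I]}\supseteq J$, so $U$ centralizes the nonzero ideal $J$; applying the centralizing-ideal argument of the first paragraph to $U$ in place of $T'$ gives $U\subseteq Z$.

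The main obstacle is the third paragraph, namely converting the ``abelian mod center'' information $[W,W]\subseteq Z$ into genuine triviality $W=0$. Two difficulties appear there. First, for odd elements the super-bracket differs from the ordinary bracket, so Lemma 1.1 cannot be invoked directly and one must route through $\mathrm{ad}_w^2=\mathrm{ad}_{w^2}$ and Lemma 3.3. Second, one only gets central squares ($w^2\in Z$) a priori, and turning these into zero squares, so that the nilpotency hypotheses of Lemmas 3.2 and 3.3 apply, is exactly where primeness and the dimensional exclusions are indispensable, since an odd square root of a nonzero central element is precisely what the excluded Clifford cases would permit.
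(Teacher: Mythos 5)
Your first two paragraphs are sound and coincide with the paper's opening move: from $[T',[I,I]]=0$ and identity (1) one gets $[T',J]=0$ for the nonzero ideal $J\subseteq\overline{[I,I]}$, hence $T'\subseteq Z$ (the paper cites Lemma 2.3 of \cite{L-S} here; note that your appeal to Lemma 1.3(i) for odd elements only places them in the ungraded center $Z(A)$, not in $Z=Z(A)_0$, so strictly you need that cited lemma or an extra argument), and therefore $[W,W]\subseteq Z$ for $W=[U,[I,I]]$. The genuine gap is exactly the step you flag yourself: converting $[W,W]\subseteq Z$ into $W=0$. Your third paragraph is a plan, not an argument, and the toolkit you name cannot carry it. The decisive case split, which your sketch never makes, is on whether $Z\cap I=0$. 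If $Z\cap I=0$, then since $W\subseteq I$ one gets $[W,W]\subseteq Z\cap I=0$, all odd squares in $W$ vanish, and Lemmas 1.1, 3.2, 3.3 do finish essentially as you indicate; this is the paper's step 7. But if $Z\cap I\neq0$, then $[W,W]$ only lies in $Z\cap I$, which need not vanish, odd elements of $W$ may a priori have nonzero central squares, and nothing in Lemmas 1.1, 3.2, 3.3 --- which are square-zero/primeness arguments carrying no trace of the excluded 4- and 8-dimensional cases --- can ``upgrade $w^2\in Z$ to $w^2=0$''.

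For that second case the paper uses entirely different and heavier machinery (its step 6): it first establishes the intermediate result $[U,U]\subseteq Z$ (steps 1--5, via Lemma 1.1 of \cite{La-M}, Herstein's Lemma 1.1, and a case analysis on $[[U,U],[I,I]]_0$ versus $[[U,U],[I,I]]_1$); it then localizes at $Z$, so that $Z^{-1}I=Z^{-1}A$ and $Z^{-1}ZU$ becomes a Lie ideal of $[Z^{-1}A,Z^{-1}A]$; it applies Montaner's Theorem 3.3 of \cite{M} to obtain the dichotomy that $Z^{-1}ZU$ is central or contains $[Z^{-1}N,Z^{-1}A]$ for a nonzero ideal $N$; and it kills the second horn using $[U,U]\subseteq Z$ together with Lemma 2.6 of \cite{M}, which is the only place where the exclusion of commutative, 4-dimensional and 8-dimensional images actually does work. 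None of this (nor any substitute of comparable strength) appears in your proposal; moreover your outline never proves $[U,U]\subseteq Z$, which that argument needs. So the proof is incomplete precisely at its core, and the missing case cannot be closed by the elementary means you propose.
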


\begin{proof}[Proof:]

If  $[T^\prime, [I,I]]=0$, it follows  from (1) that $[T^\prime, J]=0$, and from Lemma 2.3 in \cite {L-S}, we deduce that $T^\prime \subseteq Z$. Therefore 
\begin{gather}
 [[U,[I,I]]_i,[U,[I,I]]_i]\subseteq Z,\\
    [[U,[I,I]]_0,[U,[I,I]]_1]=0
 \end{gather}
 
 We will prove that, with the above supposition of being $A$ neither commutative, nor a central order in a 4-dimensional simple superalgebra, nor a central order in a 8-dimensional simple superalgebra,  then $U\subseteq Z$.  We present the proof  in 7 steps.

\medskip

1. $[[U,U],[I,I]]_0\subseteq Z(A_0)$. From Lemma 1.2,  $A_0$ is semiprime, and $[I,I]_0$ is a Lie ideal of $A_0$,  $[[U,U],[I, I]]_0 \subseteq [I,I]_0$, $[[[U,U],[I,I]]_0,[I,I]_0]\subseteq [[U,U],[I,I]]_0$ and $[[[U,U],[I,I]_0,[[U,U], [I,I]]_0]\subseteq Z(A_0)$ because of (5). So we have the conditions of Lemma 1.1 in \cite {La-M} and therefore we can conclude by this Lemma  that $[[U,U],[I,I]]_0\subseteq Z(A_0)$.

\medskip

2. $[[U,U]_0,[I,I]_0]=0$. We notice that  $A_0$ is semiprime by Lemma 1.2,  $[I,I]_0$ is a Lie ideal of $A_0$ and $[[U,U]_0,[[U,U]_0,[I,I]_0]] =0$ by step 1. So we can  apply  Lemma 1.1  and  we obtain that  $[[U,U]_0,[I,I]_0]=0$.

\medskip

3. Either  $[[U,U], [I,I]]_0=0$ or $ [[U,U], [I,I]]_1=0$.  Let $u\in [[U,U], [I,I]]_0$ and $v\in [[U,U],[I,I]]_1$. By (6)  we have $uv=vu$. But, from the hypothesis, $(u\circ v)^2=0$, hence $4u^2v^2=0$. So, since $u\in [[U,U],[I,I]]_0\subseteq Z$ and $A$ is prime,  either $[[U,U],[I,I]]_0=0$ or  $v^2=0$ for every $v\in [[U,U], [I,I]]_1$. 
If $v^2=0$ for every $v\in [[U,U],[I,I]]_1$, from Lemma 3.3, taking $L=[I,I]$, $V=[[U,U],[I,I]]$, we deduce that $[[U,U],[I,I]]_1=0$.

\medskip

4. If $[[U,U], [I,I]]_0=0$ we claim that $[U,U] \subseteq Z$. We notice that
\begin{align*} 
[U,U]_1[I,I]_0& \subseteq   [[U,U]_1, [I,I]_0]+[I,I]_0[U,U]_1\subseteq [U,U]_1 + [I,I]_0 [U,U]_1, \\
[U,U]_1 [I,I]_1 &\subseteq  [[U,U]_1, [I,I]_1] + [I,I]_1 [U,U]_1\subseteq [I,I]_1 [U,U]_1,
\end{align*}
{\noindent because of the hypothesis of this step. Therefore  $[U,U]_1[I,I]\subseteq [U,U]_1 + [I,I] [U,U]_1$.  In general, we can prove by induction on $m$ that }
$$[U,U]_1 [I,I]^{m}\subseteq [U,U]_1 + \sum_i [I,I]^i [U,U]_1,$$
{\noindent  and so $[U,U]_1 J \subseteq [U,U]_1 + \sum_i [I,I]"^i[U,U]_1$.   Now since}
 $$[[U,U]_1, [[U,U], [I,I]]_1]\subseteq [[U,U]_1, [I,I]_1]\subseteq [[U,U], [I, I] ]_0=0.$$
 {\noindent because of the hypothesis of this step,  and $[U,U]_1  \circ [[U,U], [I,I]]_1=0$ because of our hypothesis, it follows that $[U,U]_1 [[U,U], [I,I]]_1=0$, and therefore}
  $$[U,U]_1 J [ [U,U], [I,I]]_1  =0.$$
  {\noindent But $A$ is prime, so either $[U,U]_1=0$ or $[[U,U], [I,I]]_1=0$.  If $[U,U]_1=0$, then $[[U,U]_0, [I,I]_1]=0$, and from the hypothesis of this step  $[[U, U]_0, [I,I]] =0$. But then, from (2),  $[[U,U]_0,J]=0$ and so, by Lemma 2.3 in \cite{L-S},  $[U,U]_0 \subseteq Z$ and  $[U,U]\subseteq Z$. If $[[U,U],[I,I]]_1=0$, since $[[U,U]_0, [I,I]_0]=[[U,U]_1, [I,I]_1]=0$ by the hypothesis of this step, we get   $[[U,U], [I,I]]=0$ and so $[[U,U], J]=0$, from (2), and as above $[U,U]\subseteq Z$.}  

\medskip

5. If $[[U,U],[I,I]]_0\not= 0$, then by step 3 $[[U,U],[I,I]]_1=0$ and also we claim that $[U,U]\subseteq Z$. We have $[[U,U]_0, [I,I]_1]=0$, and by step 2 $[[U,U]_0, [I,I]_0]=0$. Therefore, from (2), $[[U,U]_0, J]=0$, and, by Lemma 2.3 in \cite{L-S}, $[U,U]_0\subseteq Z$. From the hypotesis about $U$, for every $u\in [U,U]_0, v\in [U,U]_1$ we have $(u\circ v)^2=0$. So, since $(u\circ v)^2 = 4 u^2 v^2=0$, we obtain from the primeness of $A$ that either $[U,U]_0=0$ or $v^2=0$ for every $v\in [U,U]_1$. If $[U,U]_0=0$, then $[[U,U],[I,I]]_0\subseteq [U,U]_0=0$, a contradiction with our assumption. If $v^2=0$ for every $v\in [U,U]_1$, from Lemma 3.3 applied to $[I,I]$ and $[U,U]$ we obtain that $[U,U]_1=0$ and so $[U,U]\subseteq Z$.

\medskip

6. If $I\cap Z\not=0$, then $U\subseteq Z$. Indeed, if $I\cap Z\not=0$, we can consider $Z^{-1}A$, which is a prime superalgebra over the field $Z^{-1}Z$. Since $Z^{-1} I \cap Z^{-1} Z \not=0$ it holds that $Z^{-1}I= Z^{-1}A$ and so $Z^{-1} ZU$ is a Lie ideal of $[Z^{-1} A, Z^{-1}A]$. From Theorem 3.3 in \cite {M}, and since  $A$ is a central order  neither in a commutative algebra, nor a 4-dimensional simple superalgebra, nor a 8-dimensional superalgebra, we deduce that either  $Z^{-1} ZU \subseteq Z^{-1} Z$ or there exist a nonzero ideal of $Z^{-1} A$, $Z^{-1}N$, such that $[Z^{-1}N, Z^{-1}A] \subseteq Z^{-1} ZU$. In the last case, since, by steps 4 and 5, $[U,U]\subseteq Z$, we have that $[[Z^{-1}N, Z^{-1}A],[Z^{-1}N, Z^{-1} A]]\subseteq  Z^{-1}Z$. We notice that if $[[Z^{-1}N, Z^{-1}A],[Z^{-1}N, Z^{-1}A]]=0$, from Lemma 1.5 and its proof and our hypothesis, $[Z^{-1}N,Z^{-1}A]\subseteq Z^{-1}Z$, and then $Z^{-1}N = Z^{-1} A$ if $[Z^{-1}N, Z^{-1}A]\not= 0$. And if $[Z^{-1} N, Z^{-1}A]=0$, by Lemma 2.3 in \cite {L-S}, $Z^{-1}N\subseteq Z^{-1}Z$ and $(Z^{-1}N)_1=0$, a contradiction with the primeness of $Z^{-1}A$. If $[[Z^{-1}N, Z^{-1}A],[Z^{-1}N, Z^{-1}A]]\not=0$, then also $Z^{-1}N=Z^{-1}A$. So $[[Z^{-1}A, Z^{-1}A],[Z^{-1}A, Z^{-1}A]]\subseteq Z^{-1}Z$ and then the superalgebra $Z^{-1}A$ verifies the identity $[Z^{-1}A, [[Z^{-1}A, Z^{-1}A],[Z^{-1}A, Z^{-1}A]]]=0$. Now by Lemma 2.6 in \cite{M} we have a contradiction with our supposition of  $A$ not being a central order neither in a commutative algebra, nor a 4-dimensional simple superalgebra, nor a 8-dimensional superalgebra (notice that the product $\circ$ in \cite {M} is our product $[\ , \ ]$ in the odd part). So $Z^{-1} ZU\subseteq Z^{-1} Z$ and then $U\subseteq Z$.

\medskip

7. If $I\cap Z= 0$, then $U\subseteq Z$. We consider $[U, [I,I]]$ and we notice that $[[U, [I,I]], [U,[I,I]]]\subseteq [U,U]\cap I \subseteq Z \cap I=0$. Therefore for every $ v\in [U, [I,I]]_1$, $v^2 =[v,v]\in [[U,[I,I]], [U,[I,I]]]=0$. From Lemma 3.1 applied to $[U,[I,I]]$ and $[I,I]$ it follows that $[U,[I,I]]_1=0$. Now let $u\in U_0$, then $[u, [u, [I,I]_0]] \subseteq [U,U] \cap I \subseteq Z\cap I=0$. By Lemmata 1.1 and 1.2 it is deduced that $[u,[I,I]_0]=0$, that is, $[U_0, [I,I]_0]=0$. Now we have $[U_0, [I,I]]=0$, and therefore, from (2), $[U_0, J]=0$. So $U_0\subseteq Z$ because of Lemma 2.3 in \cite{L-S}. But we have proved that $[U_1, [I,I]_0]\subseteq [U, [I,I]]_1=0$, and now we have $[U_1,[I,I]_1]\subseteq U_0 \cap I \subseteq Z \cap I =0$, therefore $[U_1, [I,I]]=0$, and then, from (2),  $[U_1, J]=0$.  Again,  by Lemma 2.3 in \cite {L-S}, $U_1\subseteq Z$ and $U\subseteq Z$.

\end{proof}

\bigskip

Now we consider the second case. 

\bigskip

{\bf CASE b): $[T^\prime, [I,I]] \not= 0$.}

\bigskip

We recall that $[T^\prime, [I,I]]\subseteq T^\prime$.
\begin{lem}
If $[T^{\prime},[I,I]]\not=0$, then either  $T^\prime$ is dense or  $[t,u],[u,s]=0$ for every $u\in [T^{\prime}, [I,I]]_i,$ such that $[u, u]=0$, and $ t, s \in T^{\prime}$, homogeneous .
\end{lem}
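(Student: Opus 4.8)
The plan is to exploit that, under the standing hypotheses of the section, $L:=[I,I]$ is a dense Lie ideal of $A$ (by Lemma 3.1 there is a nonzero ideal $J\subseteq\overline{[I,I]}$), and that $V:=[T',[I,I]]$ is a nonzero $\phi$-submodule of the subring $T'$ which is stable under the action of $L$. Indeed, $V\neq 0$ is exactly the standing assumption of Case (b), and by the super-Jacobi identity together with the fact that $[I,I]$ is a Lie ideal of $A$ one checks $[[T',[I,I]],[I,I]]\subseteq[T',[I,I]]$, so that $[V,L]\subseteq V\subseteq T'$. Since $T'$ is a subring it is in particular a Lie subalgebra with $[T',L]\subseteq T'$ and $\overline{T'}=T'$; thus Lemmata 3.2 and 3.3 are both available with this dense $L$, and ``$T'$ dense'' simply means $T'$ contains a nonzero ideal of $A$.

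First I would fix a homogeneous $u\in V_i$ with $[u,u]=0$ and record the force of that hypothesis. When $u$ is odd, $[u,u]=2u^2=0$ gives $u^2=0$, and then the super-Jacobi identity with both arguments equal to $u$ yields $2[u,[u,c]]=[[u,u],c]=[2u^2,c]=0$, i.e. $[u,[u,c]]=0$ for every homogeneous $c\in A$, and in particular $[u,[u,L]]=0$ with $[u,L]\subseteq V\subseteq T'$. This is precisely the hypothesis of Lemma 1.1 and supplies the square-zero input demanded by Lemma 3.3. When $u$ is even the condition $[u,u]=0$ is automatic and $[u,[u,\cdot]]$ carries no information; here I would instead route through the standing hypotheses on $U$, namely $u\circ v\in Z$ for even arguments, together with Lemma 2.3 in \cite{L-S}, to control the commutators of $u$ with $T'$.

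The dichotomy itself I would produce by the sandwich technique already used in the proof of Lemma 2.1. For $t,s\in T'$ the elements $[t,u]$ and $[u,s]$ lie in $[T',T']\subseteq T'$; using $[T',L]\subseteq T'$, the ring structure of $T'$ and the identities (1)--(4), I would attempt to build from a nonzero such commutator a nonzero ideal of $A$ sitting inside $\overline{T'}=T'$, which is exactly the assertion that $T'$ is dense. If no such ideal can be manufactured, the resulting vanishing relations should combine with the primeness of $A$ via Lemma 3.2 (a homogeneous $v$ with $v\,L\,v=0$ is zero) and Lemma 1.1 to force $[t,u]=0$ and $[u,s]=0$ for all $t,s\in T'$. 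The main obstacle is precisely this density alternative: showing that the failure to produce the ideal yields a relation of the form $[t,u]\,L\,[t,u]=0$ requires a careful, sign-tracked manipulation using $[u,u]=0$ together with the three anticommutator hypotheses on $U$, and the even case (where $[u,[u,\cdot]]$ vanishes trivially and gives nothing) must be handled separately through the centrality relations $u\circ v\in Z$.
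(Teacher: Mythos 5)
There is a genuine gap here, on two counts. First, you have misread the conclusion of the lemma: the comma in ``$[t,u],[u,s]=0$'' is a typographical slip for the \emph{product} $[t,u][u,s]=0$. This is explicit in the paper's own proof (``either $T'$ is dense in $A$ or, because of the primeness of $A$, $[t,u][u,s]=0$'') and in the later lemma that consumes this dichotomy, whose hypothesis is precisely $[u,x][u,y]=0$ and which needs a long argument exactly because only the product vanishes. You instead set out to prove $[t,u]=0$ and $[u,s]=0$ separately, i.e.\ that every homogeneous $u\in[T',[I,I]]_i$ with $[u,u]=0$ centralizes $T'$ --- a strictly stronger statement, which your outline (Lemma 3.2 plus Lemma 1.1) is never shown to deliver; if it were available this cheaply, the subsequent lemma concluding $[T',[I,I]]=0$ would be nearly vacuous.

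Second, and more seriously, the entire mathematical content of the lemma is the step you defer: you ``would attempt to build from a nonzero such commutator a nonzero ideal of $A$ sitting inside $\overline{T'}=T'$'' and you concede that ``the main obstacle is precisely this density alternative.'' Producing that ideal \emph{is} the proof, and it needs none of the machinery you propose --- not Lemma 3.2, not Lemma 1.1, not the anticommutator hypotheses on $U$, not Lemma 2.3 in \cite{L-S}; none of these appear in the paper's argument for this lemma. The actual mechanism is elementary and uniform in parity: since $[u,u]=0$, identity (1) gives $u[u,x]=(-1)^{\bar u}[u,ux]$; combined with $u\in[T',[I,I]]\subseteq T'\cap[I,I]$, with $[I,I]$ being a Lie ideal of $A$, with $[T',[I,I]]\subseteq T'$, and with $T'$ being a subring, this yields $[t,u][u,sa]\in T'$ and $[t,u]s[u,a]\in T'$, hence $[t,u][u,s]A\subseteq T'$. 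Then $[I,I][t,u][u,s]A\subseteq[[I,I],T']+[t,u][u,s]A[I,I]\subseteq T'$, and induction on powers of $[I,I]$ gives $J[t,u][u,s]A\subseteq T'$ for the nonzero ideal $J\subseteq\overline{[I,I]}$; if some $[t,u][u,s]\neq 0$, primeness makes $J[t,u][u,s]A$ a nonzero ideal of $A$ contained in $T'=\overline{T'}$, which is density. Note finally that your worry about the even case is unfounded: for even $u$ the hypothesis $[u,u]=0$ holds automatically and the rewriting $u[u,x]=[u,ux]$ is exactly the same, so no separate route through $u\circ v\in Z$ is needed.
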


\begin{proof}[Proof:]
We notice that if $u\in [T^{\prime}, [I,I]]_0,$  then $[u, u]=0$, and if $u\in [T^{\prime}, [I,I]]_1,$ then $[u, u]=0$ is equivalent to $u^2=0$.  We will prove that if  there exists $t, s \in T^{\prime}$, homogeneous,  and $u\in [T^{\prime}, [I,I]]_i,$ with $[u, u]=0$,  such that  $[t,u],[u,s]\not=0$, then $T^{\prime}$ is dense. First we see that  $[t,u][u,s]A\subseteq T^\prime$. We have $[u,s]a= [u,sa]-(-1)^{\bar s \bar u}s[u,a]$, from (2),  for every homogeneous element $a\in A$, therefore $[t,u][u,s]a=[t,u][u,sa]-(-1)^{\bar u \bar s}[t,u]s[u,a]$. But
$$[t,u][u,sa]=[t,u[u,sa]]-(-1)^{ \bar t \bar u}u[t,[u,sa]]=(-1)^{\bar u}[t,[u,usa]]-(-1)^{\bar t \bar u}u[t,[u,sa]]\in T^\prime,$$
{\noindent because $T^\prime $ is a subring and $[I,I]$ is a Lie ideal of $A$. And also }
$$[t,u]s[u,a]=[t,u][s,[u,a]]+(-1)^{\bar s (\bar u+ \bar a)}[t,u][u,a]s \in T^\prime , $$
{\noindent because }
$$ [t,u][u,a]=[t,u[u,a]]-u[t,[u,a]]= [t,[u,ua]]-u[t, [u,a]] \in T^\prime.$$
{\noindent Therefore $[t,u]s[u,a]\in T^\prime$, and also $[t,u][u,s]a \in T^\prime$ for every $a\in A, u\in [T^\prime, [I,I]]_i, t,s$ homogeneous elements in $ T^\prime$.}

Next we will show that $\overline{[I,I]} [t,u][u,s]A\subseteq T^\prime$. Since $[T^\prime, [I,I]] \subseteq T^\prime$ it follows that $[I,I][t,u][u,s]A\subseteq [[I,I],[t,u][u,s]A]+[t,u][u,s]A[I,I]\subseteq T^\prime$. Notice that also $[I,I]^2[t,u][u,s]A\subseteq [[I,I],T^\prime]+[t,u],[u,s]A \subseteq T^\prime$. Using induction over $i$ it is easy to prove that $[I,I]^i[t,u][u,s]A\subseteq T^\prime$, and so that $J[t,u][u,s]A\subseteq T^\prime$. Therefore either $T^\prime$ is dense in $A$ or, because of the primeness of $A$, $[t,u][u,s]=0$ for every $t,s \in T^\prime, u\in [T^\prime, [I,I]]_i$.

\end{proof}

\begin{lem}
If $T^\prime$ is dense, then $A$ is a central order in a superalgebra $B$ satisfying the condition $[[B,B],[B,B]]_1 \circ [[B,B].[B,B]]_1=0$.
\end{lem}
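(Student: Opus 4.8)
The plan is to convert the density of $T'$ into a concrete graded identity on a nonzero ideal, and then transport that identity to the central closure. Since $T'$ is a subring, $\overline{T'}=T'$, so the assumption that $T'$ is dense means that $T'$ contains a nonzero ideal $N$ of $A$. Because $T'\subseteq T=\{x\in A:[x,A]\subseteq U\}$, every element of $N$ lies in $T$; hence $[N,A]\subseteq U$. Forming commutators, $[[N,A],[N,A]]\subseteq[U,U]$, and in particular $[[N,A],[N,A]]_1\subseteq[U,U]_1$. The standing hypothesis on $U$ gives $u\circ v=0$ for all $u,v\in[U,U]_1$, so I obtain at once the graded identity
$$[[N,A],[N,A]]_1\circ[[N,A],[N,A]]_1=0 ,$$
valid for all choices of inner arguments in $N$ and outer arguments in $A$.

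Next I would pass to the central closure $B$ of the prime superalgebra $A$ (so that $A$ is a central order in $B$) and remove the restriction that the inner arguments lie in $N$. The identity above is multilinear in its four inner slots, and $N$ is a nonzero, hence dense, ideal of the prime ring $A$; viewing the relation as a generalized polynomial identity and enlarging one slot at a time, I would use primeness together with $ANA\subseteq N$ to replace each $N$-argument by an arbitrary element of $A$, arriving at $[[A,A],[A,A]]_1\circ[[A,A],[A,A]]_1=0$. Finally, since this relation is multilinear and the scalars of the localization are central, it is inherited by $B$: writing elements of $B$ as central quotients of elements of $A$ lets the central factors come out in front, using $[B,B]=Z^{-1}[A,A]$, while the $Z^{-1}Z$-span of $[U,U]_1$ still $\circ$-squares to zero by bilinearity of $\circ$. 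This yields $[[B,B],[B,B]]_1\circ[[B,B],[B,B]]_1=0$, which is the assertion.

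The main obstacle is precisely the middle step: stripping the ideal restriction from the inner arguments. Producing the identity on $N$ is formal, and the concluding localization is routine once the relation holds on all of $A$; but passing from ``holds for inner arguments in $N$'' to ``holds for inner arguments in $A$'' is where the real content lies, and where I expect to lean on the central-closure machinery underlying the earlier reductions (as in \cite{M}). Two bookkeeping points must be watched throughout: every evaluation has to be kept homogeneous, so that landing in the odd part and the condition $(\cdot)\circ(\cdot)=0$ remain meaningful under the superbracket and the ${\bf Z}_2$-grading; and one must confirm that the central closure $B$ is available and that $A$ is genuinely a central order in it, so that the final identity is asserted in the correct overalgebra.
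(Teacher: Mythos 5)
Your opening step (a nonzero ideal $N\subseteq T^\prime$ with $[N,A]\subseteq U$, hence the identity $[[N,A],[N,A]]_1\circ[[N,A],[N,A]]_1=0$) agrees with the paper, but the two points you then defer are exactly the content of the lemma, and the sketch you give for them would not work. First, before one can even speak of ``the central closure $B$'' one must prove $Z\neq 0$, which is not automatic in a prime superalgebra; you flag this but never establish it. Second, your plan for stripping the ideal restriction --- viewing the relation as a generalized polynomial identity and, using primeness together with $ANA\subseteq N$, replacing each $N$-slot by an arbitrary element of $A$ --- is not an argument: the relation holds only when all four inner arguments lie in $N$ simultaneously, each such argument sits inside nested brackets and a Jordan product, and substituting $anb$ for $n$ gives no mechanism to cancel the factors $a$ and $b$. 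Making that extension rigorous would require prime PI/GPI machinery (and even then one would first have to show $A$ is PI and has nonzero center), none of which the paper invokes or needs at this point.

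The paper resolves both difficulties with a single observation that your proposal misses, and it uses the \emph{even} part of the hypothesis on $U$, not the odd part you use. Since $N$ is an ideal, $[[N,A],[N,A]]\subseteq N$, while $[[N,A],[N,A]]\subseteq[U,U]$ gives $u\circ v\in Z$ for all $u,v\in[[N,A],[N,A]]_0$; hence every such $u\circ v$ lies in $Z\cap N$. If all these elements vanished, then $u^2=0$ on $[[N,A],[N,A]]_0$, and a chain of auxiliary results (Lemma 1 of \cite{La-M}, Theorem 3.2 of \cite{M}, Lemma 1.3, Lemma 2.3 of \cite{L-S}) forces $N\subseteq Z(A)$ and then $A$ commutative, contradicting the standing assumption that $A$ is neither commutative nor a central order in a $4$- or $8$-dimensional simple superalgebra. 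So some $0\neq u\circ v\in Z\cap N$: this yields $Z\neq 0$, and after localizing, the ideal $Z^{-1}N$ of $B=Z^{-1}A$ contains an invertible element of the field $Z^{-1}Z$, whence $Z^{-1}N=B$. At that point no extension of identities is needed at all: $[B,B]=[Z^{-1}N,Z^{-1}A]\subseteq Z^{-1}(ZU)$, so $[[B,B],[B,B]]_1\subseteq Z^{-1}Z\,[U,U]_1$, and the conclusion follows from bilinearity of $\circ$ together with $[U,U]_1\circ[U,U]_1=0$. In short, the paper never extends your identity from $N$ to $B$; it shows the ideal \emph{becomes} the whole algebra after localization, and that is the idea your proposal lacks.
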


\begin{proof}[Proof:]
Let $N=J[t,u][u,s]A \not=0$, since $[T^\prime, A]\subseteq U$ and $N\subseteq T^\prime$, we have $[N,A]\subseteq U$. From the hypothesis about $U$ $u\circ v\in Z$ for every $u, v \in [U,U]_0$, therefore $u\circ v \in Z$ for every $u, v \in [[N,A], [N,A]]_0$.

 We suppose first  that $u\circ v=0$ for every $u, v \in [[N,A],[N,A]]_0$. Then $1/2 (u\circ u)= u^2 =0$ for every  $u \in [[N,A], [N,A]]_0$, and since $A_0$ is semiprime because of Lemma 1,2, then  it follows from Lemma 1 in \cite {La-M} that $[[N,A],[N,A]]_0=0$. Therefore $L=[[N,A],[N,A]]$ is a Lie ideal of $A$ such that $[L,L]=0$ and from Theorem 3.2 in \cite {M} we get that $[[N,A],[N,A]]\subseteq Z$  (because $A$ in neither commutative, nor a central order in a 4-dimensional simple superalgebra, nor a central order in a 8-dimensional simple superalgebra). But then $[[N,A],[N,A]]=0$, and again from Theorem 3.2 in \cite{M},  $[N,A]$ is a Lie ideal of $A$ such that $[N,A]\subseteq Z$, and so $[N,A]_1=0$. Hence $[N_1, A_1^2]=0$ and since $A_1^2\not=0$ because $A$ is prime we obtain that $N_1\subseteq Z(A)$ because of Lemma 1.3. Besides also $[N_0,A_1]=0$ and so $[N_0, A_1^2]=0$, from (2),  what means that $[N, A_1+A_1^2]=0$ and from Lemma 2.3 in \cite {L-S} it follows that $N_0\subseteq Z$. So we have  a nonzero ideal, $N$,  of $A$ such that $N\subseteq Z(A)$, with $A$ prime, and we can deduce that $A$ is commutative like in the proof of Lemma 3.1, that is a contradiction with our hypothesis.
 
Therefore there exists $u, v\in [[N,A],[N,A]]_0$ such that $0\not= u\circ v \in Z$. Then we may form the localization $Z^{-1}A$. Since $[N,A]\subseteq U$ we have 
$$[[Z^{-1}N, Z^{-1}A], [Z^{-1}N, Z^{-1}A]] \subseteq [Z^{-1}ZU, Z^{-1}ZU],$$
{\noindent and so from the hypothesis about $U$, for every $u, v \in  [[Z^{-1}N, Z^{-1}A], [Z^{-1}N,  $ $Z^{-1}A]]_0$ we get $u\circ v \in Z^{Ð1}Z \cap Z^{-1}N$.  But $Z^{-1}Z$ is a field and so $Z^{-1}N$ has some invertible element forcing $Z^{-1}N=Z^{-1}A$. Therefore $[Z^{-1}N, Z^{-1}A] = [Z^{-1}A, Z^{-1}A]\subseteq Z^{-1}(ZU)$ and again, by the hypothesis about $U$,  it follows that $[[Z^{-1} A, Z^{-1} A],$ $[Z^{-1} A, Z^{-1} A]]_1 \circ [[Z^{-1} A, Z^{-1} A],[Z^{-1} A, Z^{-1} A]]_1=0$. }

\end{proof}

We now will study superalgebras $B$ satisfying the condition $[[B,B], [B, B]]_1\circ [[B,B], [B, B]]_1 =0$. We notice that the ${\bf Z}_2$-grading is given by the automophism $\sigma$ of the algebra defined by $x_i^\sigma = (-1)^i x_i$, on homogeneous elements $x_i$. Then, we have the group of automorphisms $G= \{ 1, \sigma\}$ acting on $A$. Superidentities in $B$ are then  special types of $G$-identities, as defined in \cite {K}, that is identities involving elements and images of elements under the action of $G$, a group of automorphisms. Therefore we can apply results about $G$-identities, in particular the following one due to V.K. Kharchenko. We denote by $R^G$ the set of elements fixed under every automorphism of $G$.

\begin {pro}(\cite {K}, Theorem 1)
Suppose $G$ is a finite commutative group of automorphisms of a semiprime algebra $R$ over a commutative domain $K$ containing a primitive root of degree $n=|G|$, and suppose that $R^G$ is prime. If $R$ satisfies a nontrivial $G$-identity, then $R$ is a PI-algebra.
\end{pro}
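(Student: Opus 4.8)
The plan is to linearize the $G$-action by means of the root of unity, reduce the $G$-identity to an ordinary generalized polynomial identity (GPI), and then invoke the structure theory of rings satisfying a GPI. First I would diagonalize the action of the finite abelian group $G$. Because $K$ contains a primitive $n$-th root of unity, the characteristic is prime to $n$; passing to the field of fractions of $K$ if necessary, $n$ is invertible and the character group $\hat G$ has exactly $n$ elements $\chi : G \to K^{*}$. The orthogonal idempotents $e_\chi = \frac{1}{n}\sum_{g\in G}\chi(g)^{-1}\,g$ of the group ring $KG$ sum to $1$ and act on $R$, splitting it into eigenspaces $R = \bigoplus_{\chi\in\hat G} R_\chi$, where $R_\chi = \{\, r : g(r) = \chi(g)\,r \text{ for all } g\in G\,\}$. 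This is a $\hat G$-grading, $R_\chi R_\psi \subseteq R_{\chi\psi}$, and $R_{\chi_0} = R^G$ for the trivial character $\chi_0$.

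Next I would rewrite the $G$-identity in terms of this grading. Writing each argument as $x = \sum_\chi x_\chi$ and substituting homogeneous components $x_\chi \in R_\chi$, every occurrence of an automorphism becomes a scalar, since $g(x_\chi) = \chi(g)\,x_\chi$. Hence, specializing all variables to homogeneous elements, the $G$-identity collapses, component by component, into a family of ordinary generalized polynomial identities for $R$ with coefficients drawn from $R$ (equivalently, from its symmetric Martindale ring of quotients).

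The crucial point, which I expect to be the \emph{main obstacle}, is to guarantee that at least one of these homogeneous specializations is a \emph{nontrivial} GPI. This is exactly where the hypotheses enter: because distinct characters are linearly independent as functions on $G$ (Dedekind--Artin independence), the monomials of the $G$-identity carrying different automorphism patterns are separated after diagonalization and cannot cancel against one another, so a nontrivial $G$-identity cannot degenerate to the trivial identity on every graded piece. Making this bookkeeping precise, in the manner of the theory of identities with automorphisms (tracking inner versus outer contributions and the reduced form of the identity), is the technical heart of the argument.

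Finally, once $R$ is known to satisfy a nontrivial GPI, I would apply Martindale's structure theorem. Since $R^G$ is prime, the grading forces $R$ to be $G$-prime, so that it behaves, up to the finite action of $G$, as a prime ring; by Martindale's theorem a prime ring satisfying a nontrivial GPI has a central closure that is a primitive ring with nonzero socle whose associated division ring is finite-dimensional over its center. That finite dimensionality yields a standard polynomial identity on the socle, which then propagates to all of $R$, so $R$ is a PI-algebra. Assembling the grading decomposition, the nontriviality bookkeeping, and Martindale's theorem completes the proof.
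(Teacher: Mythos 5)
This Proposition is not proved in the paper at all: it is Kharchenko's Theorem~1, quoted verbatim from \cite{K} and used as a black box, so there is no internal proof to compare against; your proposal has to be judged as a reconstruction of Kharchenko's own argument. Your first two steps are sound and do reflect the standard approach to the abelian case: since $K$ contains a primitive $n$-th root of unity (so the characteristic does not divide $n$), the action diagonalizes, $R=\bigoplus_{\chi\in\hat G}R_\chi$, and substituting homogeneous elements turns every automorphism into a scalar; linear independence of the characters of $G^m$, applied to a multilinearized $G$-identity, shows that not all homogeneous specializations can be formally zero. Note, however, that what this produces are identities with \emph{scalar} coefficients satisfied by the homogeneous components (graded polynomial identities), not generalized polynomial identities ``with coefficients drawn from $R$'': after diagonalization nothing from $R$ enters as a coefficient.

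The genuine gap is your last step. The implication ``prime (or $G$-prime) ring with a nontrivial GPI $\Rightarrow$ PI'' is false, so Martindale's theorem cannot close the argument. The standard counterexample is $R=F\cdot 1+\mathrm{soc}(\mathrm{End}_F V)$ with $\dim_F V$ infinite: it is primitive with nonzero socle, its associated division ring is $F$ itself, and it satisfies the nontrivial GPI $exeye-eyexe=0$ for a rank-one idempotent $e$, yet it contains $M_k(F)$ for every $k$ and hence satisfies no polynomial identity. This is exactly where your sentence ``that finite dimensionality yields a standard polynomial identity on the socle, which then propagates to all of $R$'' breaks down: finite-dimensionality of the division ring does not bound the rank of socle elements, and the socle is a directed union of matrix algebras of unbounded size. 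To reach a PI one must bound matrix sizes, and that bound has to come from the hypotheses your argument never uses quantitatively: the finiteness of $|G|=n$ and the primeness of $R^G$ beyond mere $G$-primeness (in the presence of the root of unity, primeness of $R^G$ is what forces the action to be sufficiently outer, since an inner automorphism of finite order has spectral idempotents that split the centralizer; one must then convert the graded identity into an honest identity by fixed-ring-type arguments). A secondary but real gap: Martindale's theorem is a theorem about prime rings, while $R$ here is only semiprime, and a $G$-identity does not descend to $R/P$ for a single prime $P$ because $G$ permutes the primes $P^g$; the phrase ``behaves, up to the finite action of $G$, as a prime ring'' hides a nontrivial reduction. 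As it stands, the proposal does not prove the theorem.
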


In our case, the group $G$ has two elements, and here the conditions on $K$ always hold because we can consider our algebras as ${\bf Z}$-algebras, and every automorphism is ${\bf Z}$-automorphism. Then,  an algebra satisfying $[[B,B], [B, B]]_1\circ [[B,B], [B, B]]_1 =0$ is in fact a PI-algebra,  if $B^G=B_0$ is a prime algebra. We consider next the case when $B_0$ is not prime.

\begin{lem}
If $B$ is a prime superalgebra and satisfies  the condition $[[B,B], [B, B]]_1\circ [[B,B], [B, B]]_1 =0$, and $B_0$ is not prime, then $B_0$ is commutative and $B$ is a PI-algebra.
\end{lem}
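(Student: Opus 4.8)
The plan is to strip away enough of the superstructure to bring in classical prime-ring techniques, and then feed the hypothesis back in. First I would apply Lemma 1.2: since $B$ is prime as a superalgebra while $B_0$ is \emph{not} prime, the dichotomy there forces $B$ to be prime as an ordinary algebra, and I keep in mind that $B_0$ is at least semiprime. Next I would unwind the hypothesis. For odd elements the circle product is $u\circ v = uv - vu$, so, writing $W=[[B,B],[B,B]]$, the condition $W_1\circ W_1=0$ says precisely that $W_1$ is a commuting subset of $B_1$. Since a set of pairwise commuting generators generates a commutative associative algebra, the subalgebra $\overline{W_1}$ is commutative. This is the one clean structural consequence of the hypothesis that I would exploit.

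Second, I would extract the block structure forced by $B_0$ not being prime. As $B_0$ is semiprime but not prime, there are nonzero ideals $U_0,V_0$ of $B_0$ with $U_0V_0=V_0U_0=0$; in particular $U_0B_0V_0\subseteq U_0V_0=0$, so the two sides annihilate each other through the entire even part. I would then combine the primeness of $B$ with Lemma 1.3 (an odd element centralizing a nonzero ideal of $B_0$ is central, together with the control on squares) to show that $B_1$ must connect $U_0$ and $V_0$. After passing, if necessary, to a suitable ring of quotients to house the relevant idempotent, this pins $B$ down to a Peirce / ``$(1,1)$'' decomposition in which $B_0$ is the diagonal $eBe\oplus fBf$ and $B_1$ is the off-diagonal $eBf\oplus fBe$, with $B_1B_1$ landing back in the diagonal. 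This is the standard shape of a prime superalgebra with non-prime even part.

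Third, and this is where I expect the real work, I would transfer the commutativity of $\overline{W_1}$ to all of $B_0$. In the block picture $W_1$ lies in the off-diagonal corners while $W_1W_1$ lands in the diagonal, and the commuting relations among corner elements, propagated by primeness via Lemmas 3.2 and 3.3 (using repeatedly that a nonzero square-zero configuration contradicts primeness), should force each diagonal corner to be commutative and the connecting bimodules to be rank one over the center, i.e. $[B_0,B_0]=0$. The main obstacle is precisely this propagation: the hypothesis controls only the \emph{odd} part of a \emph{second} derived object, so I must first argue that $W$ is large enough (dense, unless $B$ is already small-dimensional and hence trivially PI) and then bootstrap from $W_1$ to every odd element and finally to $B_0$.

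Finally, once $B_0$ is commutative I would conclude that $B$ is PI. Because $B_0$ is not prime I cannot invoke Proposition 3.7 (Kharchenko) directly, so instead I would argue structurally that a prime superalgebra with commutative even part is a central order in a simple superalgebra of dimension at most $4$ over its center, the model being $M_{1,1}$ whose even part is the commutative diagonal; such an algebra is PI. This both delivers the PI conclusion and fits the recurring ``at most $4$-dimensional'' ($S(2)$) theme of the paper.
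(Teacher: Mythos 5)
There is a genuine gap here, and you in effect concede it yourself. The engine of the paper's proof is Lemma 1.5 of \cite{M} (Montaner): since $B_0$ is not prime, $B$ has a nonzero \emph{ideal} $I$ which is a Morita superalgebra, $I_0=R\oplus S$, $I_1=M\oplus N$, where $R,S$ are prime algebras and orthogonal ideals of $B_0$, and $I_0$ meets every nonzero ideal of $B_0$ nontrivially. Your step 2 instead tries to produce a global Peirce decomposition of $B$ itself ($B_0=eBe\oplus fBf$, $B_1=eBf\oplus fBe$) from an idempotent found ``in a suitable ring of quotients''. No such decomposition of $B$ is available: the idempotent, if it exists at all, lives in an overring and decomposes that overring, not $B$. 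The distinction is not cosmetic, because when the block structure exists only on an ideal one must afterwards transport commutativity from $I_0$ to $B_0$; the paper does this with a separate argument (for $y,z\in I_0$, $a,b\in B_0$ the identities (1) and (2) give $y[a,b]z=0$, so $([B_0,B_0]B_0+[B_0,B_0])I_0$ is a nilpotent ideal of the semiprime algebra $B_0$, hence zero, and since $I_0$ meets every nonzero ideal of $B_0$ this forces $[B_0,B_0]=0$). Nothing in your proposal plays this role.

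Second, the central computation is missing, as you admit (``should force \dots'', ``the main obstacle is precisely this propagation''). In the paper the hypothesis is instantiated on block elements: $[[R,R],[R,M]]$ and $[[R,R],[R,N]]$ consist of odd elements of $[[I,I],[I,I]]_1$ sitting in the $M$- and $N$-corners respectively, and since their $\circ$-product has one summand in the $R$-corner and the other in the $S$-corner, the hypothesis yields $[R,R]RMNR[R,R]=0$. Primeness of $R$ gives $[R,R]=0$ or $RMNR=0$, and the second branch is excluded: it forces $MN=NM=0$, so $II_1I$ is a square-zero ideal of the prime algebra $B$, whence $I_1=0$ and $R,S$ become orthogonal ideals of $B$, contradicting primeness. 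Hence $[R,R]=[S,S]=0$ and $I_0$ is commutative. Your alternative plan (prove $W=[[B,B],[B,B]]$ dense, then ``bootstrap from $W_1$ to every odd element and finally to $B_0$'') is a hope, not an argument; your one concrete observation, that $\overline{W_1}$ is commutative, is never actually brought to bear on $[B_0,B_0]$. Finally, your PI step rests on a further unproved assertion -- that a prime superalgebra with commutative even part is a central order in a simple superalgebra of dimension at most $4$ -- which is not among the results the paper provides for citation; the paper reaches the PI conclusion through the Morita context it has already built (with $R$ and $S$ commutative the ideal $I$ is an order in a small superalgebra, and $B$, prime with a nonzero PI ideal, inherits the PI). So your proposal identifies the correct objects (the hypothesis as a commutativity condition on odd elements, the two-block shape coming from non-primeness of $B_0$) but omits both of the steps that actually constitute the proof.
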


\begin{proof}[Proof:]
 By Lemma 1.5 in \cite {M}, $B$ has an ideal $I$ which is a Morita superalgebra. This means that we have a Morita context $(R, S, M, N, \mu, \tau)$, where $R$ and $S$ are associative $\phi$-algebras, $M$ is an $R$-$S$-bimodule, $N$ is an $S$-$R$-bimodule and $\mu \colon M\otimes _RN \to R, \tau \colon N\otimes _R M \to S$ are bimodule homomorphisms, such that $I$ is the set of matrices
 $$I= \begin{pmatrix} R & M \\ N & S \end{pmatrix},$$
with the known algebra structure given by the Morita contex, and the following grading as superalgebra 
$$I_0=\begin{pmatrix} R & 0 \\ 0 & S\end{pmatrix}, I_1= \begin{pmatrix} 0 & M \\ N & 0 \end{pmatrix}.$$
Moreover,  Lemma 1.5 in \cite {M} and its proof says that $R$ and $S$ are prime algebras and orthogonal ideals of $B_0$, and also that $I_0$ intesects nontrivially every nonzero ideal of $B_0$.

We have that $I$ satisfies $[[I,I], [I, I]]_1\circ [[I,I], [I, I]]_1 =0$, hence
\begin{equation*} \begin{split}
&[[\begin{pmatrix} R & 0 \\ 0 & 0 \end{pmatrix},  \begin{pmatrix} R & 0 \\ 0 & 0\end{pmatrix}], [\begin{pmatrix} R & 0 \\ 0 & 0 \end{pmatrix},  \begin{pmatrix} 0 & M \\ 0 & 0\end{pmatrix}]]\\ 
& \circ [[\begin{pmatrix} R & 0 \\ 0 & 0 \end{pmatrix},  \begin{pmatrix} R & 0 \\ 0 & 0\end{pmatrix}], [\begin{pmatrix} R & 0 \\ 0 & 0 \end{pmatrix},  \begin{pmatrix} 0 & 0 \\ N & 0\end{pmatrix}]]=0.
\end{split} \end{equation*}

So $[R,R]RMNR[R,R]=0$, but since $R$ is prime either $[R,R]=0$ or $RMNR=0$. If $RMNR=0$, then $MN=0$ because $R$ is prime, and  so $NM$ is a trivial ideal of $S$, which is also prime, therefore $NM=0$ and $I_1$ is a trivial ideal of $I$. But then $II_1I$ is also a trivial ideal of $B$, and because $B$ is prime and $I\not=0$ we have $I_1=0$, and as a consequence $R$ and $S$ are orthogonal ideals of $A$, a contradiction with the primeness of $B$. Thus $[R,R]=0$. Similarly we can prove that $[S,S]=0$, and so $I_0$ is commutative. But then for every $y, z \in I_0$ and $a,b\in B_0$ it follows, from (2) and (1),  that
$$ y[a,b]z=y[a,bz]-yb[a,z]= [ya,bz]-[y,bz]a-[yba,z]+[yb,z]a=0,$$
and so $ [([B_0, B_0],B_0 + [B_0, B_0]) I_0]^2\subseteq I_0 [B_0, B_0]I_0=0$.  But $B_0$ is semiprime because of Lemma 1.2, and $[B_0,B_0]I_0$ is an ideal of $B_0$, therefore $( [B_0, B_0]B_0 + [B_0, B_0])I_0=0$. Since $I_0$ intersects nontrivially every nonzero ideal of $B_0$ we have $([B_0, B_0]B_0 + [B_0, B_0]) \cap I_0 = N\not= 0$ satisfies that $N^2=0$. So $[B_0, B_0]B_0 + [B_0, B_0]=0$, and $B_0$ is commutative.

\end{proof}

\begin{lem}
If $B$ is a prime superalgebra satisfying  the condition $[[B,B],[B,B]]_1\circ [[B,B],[B,B]]_1$ $=0$, then $B$ is a central order in $\Omega \oplus \Omega . v$ with $v^2\in \Omega$ (where $\Omega$ is the field of fractions of $Z$):
\end {lem}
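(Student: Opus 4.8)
The plan is to reduce to a finite-dimensional situation via PI-theory and then classify. First I would argue that $B$ is a PI-algebra. If $B_0=B^G$ is prime, then since the hypothesis $[[B,B],[B,B]]_1\circ[[B,B],[B,B]]_1=0$ is a nontrivial $G$-identity for $G=\{1,\sigma\}$ (it fails, for example, in $Q(2)$), Proposition 3.9 yields that $B$ is PI; and if $B_0$ is not prime, Lemma 3.10 gives the same conclusion. So in every case $B$ is a prime PI-superalgebra. Next I would check that the even center $Z$ is nonzero: a nonzero central element splits into homogeneous central components under the grading automorphism $\sigma$, and an odd central element has nonzero even square by primeness, so one lands in $Z$. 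Hence the localization $Q=Z^{-1}B$ exists, and being prime and PI it is finite-dimensional over the field $\Omega=Z^{-1}Z$, graded-simple and central in the graded sense, and it still satisfies the identity (which localizes).

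Then I would extend scalars to an algebraic closure $\overline{\Omega}$, so that $\overline{Q}=Q\otimes_\Omega\overline{\Omega}$ is a finite-dimensional central simple superalgebra over $\overline{\Omega}$ still satisfying the identity; by the classification of such superalgebras it is isomorphic to a matrix superalgebra $M_{p|q}(\overline{\Omega})$ or to a queer superalgebra $Q(n)(\overline{\Omega})$, and since $B$ is nontrivial we have $\overline{Q}_1\neq 0$. The core of the argument is then to decide, type by type, whether the identity can hold. I would compute the second derived Lie superalgebra $L=[[\overline{Q},\overline{Q}],[\overline{Q},\overline{Q}]]$ together with $L_1\circ L_1$, keeping in mind that for odd $x,y$ the product $x\circ y$ equals the ordinary commutator $xy-yx$. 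For $Q(n)$ with $n\ge 2$ one finds that $L_1$ spans the traceless off-diagonal part and that $L_1\circ L_1$ recovers $[\mathrm{sl}_n,\mathrm{sl}_n]\neq 0$; for $M_{p|q}$ with $p+q\ge 3$ the even part of the first derived algebra is large enough that $L_1$ is all of $\overline{Q}_1$, and again $L_1\circ L_1\neq 0$. Thus the only survivors with nonzero odd part are $M_{1|1}$ (dimension $4$, where $L_1=0$, so the identity holds trivially) and $Q(1)$ (dimension $2$, where $L$ is already central).

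It remains to discard $M_{1|1}$, and here I would invoke the standing hypothesis that $B$ is not a central order in a $4$- or $8$-dimensional simple superalgebra: if $\overline{Q}=M_{1|1}$, then $Q$ would be a $4$-dimensional central simple superalgebra over $\Omega$, contradicting that assumption. Hence $\overline{Q}=Q(1)$ and $\dim_\Omega Q=2$. Descending, $Q_0$ is a one-dimensional $\Omega$-algebra, so $Q_0=\Omega$, and choosing $0\neq v\in Q_1$ gives $Q_1=\Omega v$ with $v^2\in Q_0=\Omega$; that is, $Q=\Omega\oplus\Omega v$ with $v^2\in\Omega$, and $B$ is a central order in it. The main obstacle I expect is the type-by-type computation of $L_1\circ L_1$ in the classification step, where the borderline behaviour of the small algebras $M_{1|1}$ and $Q(1)$ must be pinned down exactly; a secondary technical point is to verify that the central closure is genuinely a central simple superalgebra whose type is preserved under extension to $\overline{\Omega}$, in particular that the queer type, which is not simple as an ungraded algebra, is the one actually occurring here.
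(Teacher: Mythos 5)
Your proposal follows the same route as the paper's proof: first show $B$ is PI (Kharchenko's theorem when $B_0=B^G$ is prime, the preceding lemma on non-prime $B_0$ otherwise), then use Montaner's Lemma 1.7 to realize $C=Z^{-1}B$ as a simple superalgebra finite dimensional over $\Omega$, then extend scalars to $\bar\Omega$ and appeal to Wall's classification. Up to that point you and the paper coincide, and your preliminary checks (nontriviality of the $G$-identity because it fails in $Q(2)$; nonvanishing of $Z$ via Posner's theorem plus the grading) are correct, though in the paper they are absorbed into the cited results.

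The divergence is at the classification step, and it matters. The paper asserts outright that the only finite-dimensional simple superalgebra over $\bar\Omega$ with nonzero odd part satisfying the identity is $\bar\Omega\oplus\bar\Omega u$. Your type-by-type computation shows this assertion is false: for $C=M_{1|1}(\bar\Omega)$ one has $[C,C]=\bar\Omega 1\oplus C_1$ and hence $[[C,C],[C,C]]=\bar\Omega 1$, whose odd part is zero, so $M_{1|1}$ satisfies the identity vacuously, yet it is $4$-dimensional and not of the form $\bar\Omega\oplus\bar\Omega u$. Thus $M_{1|1}$ is a genuine counterexample to the lemma as literally stated, and the paper's own proof has a gap exactly where it jumps from Wall's list to $\bar\Omega\oplus\bar\Omega u$; your more careful analysis is the one that survives scrutiny. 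The price is that discarding $M_{1|1}$ forces you to import the section's standing hypothesis (that $A$ is not a central order in a $4$- or $8$-dimensional simple superalgebra), which is a hypothesis on $A$, not on the abstract $B$ of the statement, so strictly you prove a corrected lemma rather than the stated one. But that is the right repair: in its only application, $B$ is the central closure of $A$ produced by the earlier density lemma, so a $4$-dimensional outcome would contradict the standing assumption, and the subsequent corollary of the paper --- which explicitly allows the $4$- and $8$-dimensional alternatives --- is consistent with exactly this corrected reading. One final point that both you and the paper treat lightly: for $\bar C=C\otimes_\Omega\bar\Omega$ to remain simple one needs $Z(C)_0=\Omega$, since a larger even center would produce zero divisors after extension; you at least flag this as a point to verify, while the paper passes over it in silence.
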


\begin{proof}[Proof:]
By Proposition 3.5 and Lemma 3.6,  $B$ is PI-algebra. Then by Lemma 1.7 in \cite {M} $B$ is a central order in a simple superalgebra which is finite dimensional over $\Omega $, that is, $C= Z^{-1} B$ is simple and finite dimensional over $\Omega = Z^{-1} Z$. Take $\bar \Omega$ an algebraic closure of $\Omega$. Then $\bar C= Z^{-1}B\otimes \bar \Omega$ is a simple superalgebra, finite dimensional over $\bar \Omega$ and satisfies $[[\bar C, \bar C], [\bar C, \bar C]]_1\circ [[\bar C, \bar C],[\bar C, \bar C]]_1=0$.  But finite dimensional simple associative superalgebras were classified in \cite {W} and over an algebraically closed field we obtain that $\bar C= \bar \Omega \oplus \bar \Omega . u$ with $u^2=1$, so $C = \Omega \oplus \Omega . v$ with $v^2\in \Omega$.

\end{proof}

So, from Lemmata 3.6, 3.7, 3.8 and 3.9 we can deduce:

\begin{coro}
If $T^{\prime} $ is dense in $A$,  then $A$ is either commutative, or a central order  in a 4-dimensional simple superalgebra, or a central order in a 8-dimensional simple superalgebra
\end{coro}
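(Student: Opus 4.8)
The plan is to chain Lemmata 3.6--3.9 under the hypothesis that $T'$ is dense, reading the statement as a dichotomy. If $A$ is already commutative or a central order in a $4$- or $8$-dimensional simple superalgebra there is nothing to prove; so I would assume $A$ is none of these, which is exactly the standing hypothesis under which Lemma 3.6 was established, and aim to force $A$ to be commutative, a contradiction that closes the argument.

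Concretely, I would first invoke Lemma 3.6: since $T'$ is dense, $A$ is a central order in the prime superalgebra $B=Z^{-1}A$, and $B$ satisfies the superidentity $[[B,B],[B,B]]_1\circ[[B,B],[B,B]]_1=0$. I would then apply Lemma 3.9 to $B$: being prime and satisfying that condition, $B$ is a central order in $C=\Omega\oplus\Omega.v$ with $v^2\in\Omega$, where $\Omega=Z^{-1}Z$. The PI-theoretic input needed for Lemma 3.9 is already internal to it, supplied by Proposition 3.7 (Kharchenko) when $B_0=B^G$ is prime and by Lemma 3.8 when $B_0$ is not prime, so at this stage I only need the statement of Lemma 3.9.

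Finally I would read off the structure of $C$. Primeness of $C=Z^{-1}B$ forbids $v^2=0$, for otherwise $\Omega.v$ would be a nonzero nilpotent ideal; hence $v$ is invertible, and since $\Omega$ is central and $v$ commutes with itself, $C$ is commutative as an ordinary algebra. Thus the superalgebra $A$ embeds into the commutative algebra $C$ and is itself commutative, contradicting the standing hypothesis and completing the dichotomy. The part I expect to demand the most care is the transitivity of the central-order relation: Lemma 3.6 presents $A$ as a central order in $B$ and Lemma 3.9 presents $B$ as a central order in $C$, and I must compose these so that $A$ genuinely is a central order in the $2$-dimensional $C$ and hence embeds in it. Granting that, recognizing $\Omega\oplus\Omega.v$ as commutative---and hence safely inside the first alternative of the trichotomy, well below the $4$- and $8$-dimensional bounds---is routine, and the contradiction with the standing hypothesis is immediate.
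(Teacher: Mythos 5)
Your proposal is correct and takes essentially the same route as the paper, which derives the corollary precisely by chaining Lemmata 3.6, 3.7, 3.8 and 3.9 (the paper supplies no further detail beyond this chaining). The one refinement worth noting is that the transitivity of the central-order relation you flag as delicate can be bypassed: since $A$ and $B=Z^{-1}A$ are prime, the localization maps $A\hookrightarrow B$ and $B\hookrightarrow \Omega\oplus\Omega.v$ are injective, and composing these embeddings already places $A$ inside the commutative algebra $\Omega\oplus\Omega.v$ (which is commutative regardless of whether $v^{2}=0$, being generated by one element over the central field $\Omega$), giving the contradiction.
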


So, because of our assumption,  $T^\prime$ is not dense in $A$, and then $[t,u][u,s]=0$ for every $u\in [T^{\prime}, [I,I]]_i$ such that $[u,u]=0$, and for every $ t,s\in T^{\prime}$, homogeneous.  We will prove that this can not occur under our assumption of  $[T^\prime, [I,I]]\not=0$ and of $A$ being neither commutative, nor a central order in a 4-dimensional simple superalgebra, nor a central order in a 8-dimensional simple superalgebra

\begin{lem}
If $[t,u][u,s]=0$ for every $u\in [T^\prime, [I,I]]_i$ such that $[u, u]=0$, and for every $ t,s \in T^\prime$, then $[T^{\prime}, [I,I]]=0$.
\end{lem}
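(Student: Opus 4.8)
The plan is to prove directly that $V:=[T^{\prime},[I,I]]=0$, contradicting the standing Case~(b) hypothesis. Throughout I use that $T^{\prime}$ is a ring, that $[T^{\prime},[I,I]]\subseteq T^{\prime}$ (so $V\subseteq T^{\prime}$ and $[V,[I,I]]\subseteq V$), that $V\subseteq[T^{\prime},A]\subseteq U$, and that by Lemma~3.1 there is a nonzero ideal $J\subseteq\overline{[I,I]}$, so $[I,I]$ is dense. Fix $u$ with $[u,u]=0$ (this covers every $u\in V_0$ and every $u\in V_1$ with $u^2=0$) and put $P_u:=[u,T^{\prime}]\subseteq T^{\prime}$. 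Since $[t,u]=-(-1)^{\bar t\bar u}[u,t]$, the hypothesis is precisely $P_uP_u=0$; in particular every homogeneous element of $P_u$ squares to zero. Substituting $s\mapsto us^{\prime}$ (legitimate because $u\in T^{\prime}$ and $T^{\prime}$ is a ring) and using $[u,us^{\prime}]=(-1)^{\bar u\bar u}u[u,s^{\prime}]$ gives $[u,t]\,u\,[u,s^{\prime}]=0$, and iterating yields $[u,t]\,u^{k}\,[u,s]=0$ for all $k$.

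The decisive step is to replace these powers of $u$ by elements of the dense ideal. Expanding $[u,t]\,w\,[u,s]$ for $w\in[I,I]$ with the graded Jacobi identity, and using both $P_uP_u=0$ and the closure $[V,[I,I]]\subseteq V\subseteq T^{\prime}$, each summand is annihilated either by $P_uP_u=0$ or by the base relation; an induction on the length of products drawn from $[I,I]$ then upgrades this to $P_u\,J\,P_u=0$. As $A$ is prime, $P_u=[u,T^{\prime}]=0$. Since $V\subseteq T^{\prime}$ this gives $[u,V]=0$, whence $[u,[u,[I,I]]]\subseteq[u,V]=0$; as $A$ is semiprime and $[I,I]$ is a Lie ideal, Lemma~1.1 forces $[u,[I,I]]=0$. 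Thus $u$ centralizes $\overline{[I,I]}$, in particular the nonzero ideal $J$, so Lemma~2.3 in \cite{L-S} (for even $u$) and Lemma~1.3 (for odd $u$, centralizing the nonzero ideal $J_0$ of $A_0$) give $u\in Z$. In particular $V_0\subseteq Z$, and every square-zero element of $V_1$ lies in $Z$.

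To collapse $V$ to zero I would then argue as in the localization step of the proof of Lemma~3.4 (its step~6) and of Lemma~3.6: since $V\subseteq U$, the bracket $[V,V]\subseteq[U,U]$ carries the $\circ$-constraints, and a nonzero central element of $V_0$ would survive into $Z^{-1}A$ and, through those constraints together with \cite{M} (Theorem~3.3 and Lemma~2.6), force $Z^{-1}A$ to be commutative or a central order in a $4$- or $8$-dimensional simple superalgebra, against our standing exclusions; hence $V_0=0$. For the odd part I want Lemma~3.3, applied to the dense Lie ideal $L=[I,I]$ and to $V$ (using $[V,[I,I]]\subseteq V$), which requires $v^2=0$ for all $v\in V_1$, whereas the hypothesis only supplies the relation when $v^2=0$. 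Removing this restriction is the main obstacle; I expect to exploit that $V_0=0$ makes $[v,[I,I]_1]\subseteq V_0=0$, supercommuting $v$ with a large part of the dense ideal, and then to combine Lemmata~3.2 and 3.3 to propagate $v^2=0$ across $V_1$ and conclude $V_1=0$. With $V_0=V_1=0$ we obtain $[T^{\prime},[I,I]]=0$. The two genuinely delicate points are the amplification $P_uP_u=0\Rightarrow P_uJP_u=0$ and the removal of the $u^2=0$ restriction in the odd case.
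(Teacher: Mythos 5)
Your reformulation of the hypothesis as $P_uP_u=0$ with $P_u=[u,T^\prime]$ is correct, and so is the auxiliary relation $[t,u]u^k[u,s]=0$; but the step you yourself call decisive, the amplification $P_uP_u=0\Rightarrow P_uJP_u=0$, does not go through, and it is exactly where the difficulty of the lemma lies. Expand $[u,t]\,w\,[u,s]$ for $w\in[I,I]$: writing $w[u,s]=[w,[u,s]]+(-1)^{\bar w(\bar u+\bar s)}[u,s]w$, the second summand dies because $[u,t][u,s]=0$, and the super-Jacobi identity gives $[w,[u,s]]=[[w,u],s]+(-1)^{\bar w\bar u}[u,[w,s]]$. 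The summand $[u,t][u,[w,s]]$ is indeed killed by the base relation, since $[w,s]\in T^\prime$; but the summand $[u,t]\,[[w,u],s]$ survives. Here $[w,u]$ is a \emph{new} element of $X:=[T^\prime,[I,I]]$, in general different from $u$, and your hypothesis only controls products in which the \emph{same} $u$ occurs in both factors. No linearization can remove it: for odd $u$ the side condition $[u,u]=0$ (i.e.\ $u^2=0$) is not linear in $u$, so the hypothesis cannot be polarized at all, and for even $u$ polarization only yields $[t,u][u^\prime,s]+[t,u^\prime][u,s]=0$, not the vanishing of the mixed term. So your induction fails already at length one, $P_u=0$ is not established, and everything built on the centrality of $u$ collapses. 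Note how the paper dodges precisely this problem: it right-multiplies the expanded relation $[u,x][u,y]=0$ ($x,y\in X$) by $u$ to get $uxuyu=0$, then substitutes $y\mapsto[y,[l,u]]$ with $l\in[I,I]$, so that the unwanted cross terms are annihilated by the factor $u^2=0$ and one is left with $uxu\,l\,uyu=0$; the dense set is inserted \emph{between} $uxu$ and $uyu$, and Lemma 3.2 then yields only $uXu=0$ --- a much weaker conclusion than your $[u,T^\prime]=0$, but one that can actually be proved.

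Even granting $P_u=0$, the remainder of your plan has two further gaps. First, you apply Lemma 1.1 to $[u,[u,[I,I]]]=0$ for $u$ possibly odd; Lemma 1.1 concerns ordinary brackets in a semiprime algebra, and the paper only ever invokes it through the even part $A_0$ (via Lemma 1.2) for even elements, so the odd case needs a separate argument. Second --- and you admit this --- you have no proof that $V_1=0$: your information only concerns square-zero odd elements, whereas Lemma 3.3 requires $v^2=0$ for \emph{all} $v\in V_1$. This missing piece is not a technicality: it is exactly at this point that the paper brings in the standing $\circ$-conditions on $[U,U]$, which your sketch never uses in any concrete computation. The paper's actual route is: from $uXu=0$ deduce $uu^\prime=0$ for any two square-zero homogeneous $u,u^\prime\in X$; use this and Lemma 3.3 to get $[X,X]=[X_1,X_1]$; then, setting $S=[U,[I,I]]$, $W=[S,S]$, $B=[W,[I,I]]\subseteq [U,U]\cap X$, combine $[X_0,X_0]=[X_0,X_1]=0$ with $b_0\circ b_0^\prime\in Z$, $(b_0\circ b_1)^2=0$ and Herstein's results (\cite{H3}, Lemma 4 and Theorem 1) to force $B_1=0$, $W_1=0$ and finally $B=0$, which yields $[T^\prime,[I,I]]=0$. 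As it stands, your proposal is not a proof, and its central amplification step appears unfixable in the form stated.
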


\begin{proof}[Proof:]
We prove the result in 4 steps. 

\medskip

1. $[X, X]= [X_1, X_1]$  with $X=[T^\prime, [I,I]]$. Indeed, we have $[t,u]^2=0$ for every $t\in T^\prime, u\in X_0$. Let $x, y \in X$, homogeneous, and $u\in X_i$ such that  $u^2=0$. From our assumption $[u,x][u, y]=0$, and expanding this gives $ux u y- (-1)^{\bar y \bar u}u x yu+(-1)^{\bar x \bar u + \bar y \bar u}xuyu=0$. Right multiplication by $u$ gives $uxu yu=0$. Since $[y, l]\in X$ for every $l\in [I,I]_i$, we obtain that $[y, [l,u]]\in X$. So $ux u [y, [l,u]]u=0$. Expanding this expression yields $ux uluyu=0$.  From Lemma 3.2 we deduce that $uXu=0$. If $u, u^\prime \in X$ are homogeneous elements  with $u^2=(u^\prime)^2=0$, we conclude that
$$(uu^\prime)^2 = uu^\prime uu^\prime \in uXuu^\prime=0.$$
{\noindent If $l\in [I,I]_i$ we have }
$$ 0=u[u^\prime, l]uu^\prime= uu^\prime l u u^\prime , $$
{\noindent so $uu^\prime[I,I]uu^\prime=0$ and from Lemma 3.2, }
$$uu^\prime =0 \quad \text {for every}  \quad u, u^\prime \in X, \text { homogeneous,  with} \quad u^2=(u^\prime )^2 =0. \quad (*)$$
{\noindent  Now consider $x, y \in X_1, u, v\in X_0$. We have $[x,u]^2=0=[y,v]^2$, and so $[x,u][y,v]=0$, because of $(*)$. Since $[X_0, X_1]$ is additively generated by the elements $[x,u]$ with $x\in X_1, u \in X_0$, we have $v^2=0$ for every $v\in[X_0, X_1]$. From Lemma 3.3, $[X_0, X_1]=0$, and $[X,X]=[X_0,X_0] + [X_1, X_1]$. Now consider $K=[X_0, X_0]$. We notice that $K$ is a subalgebra of $A^-$ and $[K, [I,I]]\subseteq K$. From our assumption for every $ x, y, u, v \in X_0$ we have $[x,u]^2=[y,v]^2=0$, and so, by $(*)$ we obtain that $[x,u] [y,v]=0$. Again, since $[X_0,X_0]$ is additively generated by the elements $[x,u]$ with $x, u \in X_0$, we  deduce that for every $v\in K=K_0$, $v^2=0$. From Lemma 3.3, $K=0$. Therefore $[X,X]= [X_1, X_1]$.
}

  \medskip
  
  2. $ [ W_0, [I,I]_0]=0,$ and $  B_1=0$ with $B=[W, [I,I]], W=[S,S]$ and $S=[U, [I,I]]$. We have $ B\subseteq [U,U]$. Since $W\subseteq T^\prime, B=[W,[I,I]]\subseteq [T^\prime, [I,I]]= X$.  So, from step 1,  for every $b_0, b_0^{\prime} \in B_0, [b_0, b_0 ^{\prime}]=0$.  But $B\subseteq [U,U]$, and then $b_0\circ b_0^\prime \in Z $  for every $b_0\in B_0$. Therefore $B_0^2\subseteq Z$.  Now Lemma 4 in \cite {H3} yields $[B_0,[I,I]_0]=0$. Hence $[[W_0,[I,I]_0],[I,I]_0]=0$, and by theorem 1 in \cite {H3} $[W_0, [I,I]_0]=0$. Also, since $B\subseteq [U,U]$, we have $ (b_0\circ b_1)^2=0$ for every $b_0\in B_0, b_1\in B_1$. But $B\subseteq X$, and so $ [b_0, b_1]=0$. Now applying that $B_0^2\subseteq Z$, we obtain that $b_0^2b_1^2=0$ for every $b_0\in B_0, b_1 \in B_1$. We consider now the ideal of $A$, $b_0^2A$ and then $b_1^2(b_0^2A)=0$. Hence, from the primeness of $A$, either $b_0^2=0$ or $b_1^2=0$. From Lemma 3.3,  if $b_0^2=0$,  $B_0=0$ and then $[B_1,B_1]=0$ and therefore $[b_1, b_1]=b_1^2=0$.  So in any case $b_1^2=0$ for every $b_1\in B_1$, and again from Lemma 3.3 $B_1=0$.
  
  \medskip
  
  3.   $W_1=0$ with $W=[S,S], S=[[U,[I,I]]$. Since  $W\subseteq [U,U]$,  $w_0 \circ w_0^\prime \in Z$  for every $w_0, w_0^\prime \in W_0$.  But $W_0\subseteq [I,I]_0$, and then by step 2 $[w_0, w_0^\prime]=0$. Therefore  $W_0^2\subseteq Z$. Moreover,  since $W\subseteq [U,U]$, we have also $(w_0\circ w_1)^2=0$, and because of step 2 also $ [w_0,w_1]\in B_1=0$, for every $w_0\in W_0, w_1 \in W_1$. So  $w_0^2w_1^2=0$. Hence $w_0^2 A$ is an ideal of $A$ such that $w_0^2A w_1^2=0$, and then either $w_0^2=0$ or $w_1^2=0$. Now, as in the proof of step 2, we can deduce that $W_1=0$.
  
  \medskip
  
  4. $B=0$ and then $[T^\prime, [I,I]]=0$, a contradiction. Indeed, from step 2, $B_1=0$. And since $B=[W, [I,I]]$, from step 2 and 3, $B= B_0=[W_0, [I,I]_0] + [W_1, [I,I]_1]=0$. But $T^\prime$ is the subring of $T$ generated by $W=[S,S]=[[U,[I,I]], [U,[I,I]]]$. So if $[w,y]=0$ for every $w\in W, y\in [I,I]$, homogeneous, then,  since $[ w^{\prime}w, y]= w^{\prime}[w, y] + (-1)^{\bar w \bar y} [w^{\prime}, y] w=0$, we deduce that $[T^\prime, [I,I]]=0$.
\end{proof}

So, in the last Lemma we have arrived to a contradiction with our assumption in case b): $[T^\prime, [I,I]]=0$. Hence, from the above results, we can  deduce the following theorem

\begin{theo}
Let $A$ be a prime superalgbra and let $I$ be a nonzero proper ideal of $A$. Suppose that $U$ is a  subalgebra of $A^-$ such that $[U, [I,I]]\subseteq U$, $u\circ v \in Z$ for every $u, v \in [U,U]_0$, $u\circ v=0$ for every $u, v\in [U,U]_1$ and $(u\circ v)^2=0$ for every $u\in [U,U]_0, v\in [U,U]_1$. Then either $A$ is commutative, or $A$ is a central order in a 4-dimensional simple superalgebra, or $A$ is a central order in a 8-dimensional simple superalgebra, or $U\subseteq Z$.
\end{theo}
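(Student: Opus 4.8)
The statement is the capstone of the section, so the plan is to collate the case analysis that the preceding lemmata were constructed to resolve. The reduction is immediate: it suffices to prove that if $A$ is neither commutative, nor a central order in a $4$-dimensional simple superalgebra, nor a central order in an $8$-dimensional simple superalgebra, then $U\subseteq Z$; these are exactly the standing exclusions under which Lemmata 3.4 through 3.7, Lemma 3.9 and Corollary 3.10 were established. I would therefore open with this reduction and then branch along the dichotomy already prepared in the running text.

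First I would apply Lemma 3.1 to the ideal $I$. Since $A$ is excluded from being commutative or a central order in a $4$-dimensional Clifford superalgebra, the only surviving alternative forces $[I,I]$ to be dense, so there is a nonzero ideal $J$ of $A$ with $J\subseteq\overline{[I,I]}$; this density is what every subsequent primeness argument consumes. I would then recall the ring $T=\{x\in A:[x,A]\subseteq U\}$ and the subring $T'$ generated by $[[U,[I,I]],[U,[I,I]]]$, noting $[T',[I,I]]\subseteq T'$. The whole proof then turns on whether $[T',[I,I]]=0$ or $[T',[I,I]]\neq 0$.

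In the first case Lemma 3.4 delivers $U\subseteq Z$ outright, which is the desired conclusion, so the remaining work is purely to rule out the second case. If $[T',[I,I]]\neq 0$, Lemma 3.5 splits it further: if $T'$ is dense, then Corollary 3.10 (assembled from Lemma 3.6, Lemma 3.7, Proposition 3.8 and Lemma 3.9 via Kharchenko's $G$-identity theorem and the classification of finite-dimensional simple superalgebras) forces $A$ into one of the three excluded types, a contradiction; and if $T'$ is not dense, then $[t,u][u,s]=0$ for all homogeneous $t,s\in T'$ and $u\in[T',[I,I]]_i$ with $[u,u]=0$, whereupon Lemma 3.11 gives $[T',[I,I]]=0$, contradicting the case assumption. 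Both sub-branches collapse, so the second case never occurs, and we are always in the first, giving $U\subseteq Z$.

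The main obstacle is not at the level of the theorem, where the argument is a clean bookkeeping of alternatives, but in the two pillars it rests on, both already discharged: the dense-$T'$ branch, which requires passing to the localization $Z^{-1}A$ and invoking $G$-identity theory to force PI-ness and hence finite dimensionality over the center; and the computational Lemma 3.11, whose nested commutator identities, fed through the semiprimeness tools of Lemmata 3.2 and 3.3, drive $[t,u][u,s]=0$ all the way back to $[T',[I,I]]=0$. With those in place, the theorem follows by collating the cases as above.
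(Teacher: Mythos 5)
Your proposal is correct and follows exactly the paper's own route: the theorem is proved by collating the section's case analysis under the standing exclusions, using Lemma 3.1 to obtain the ideal $J\subseteq\overline{[I,I]}$, splitting on $[T',[I,I]]=0$ (Lemma 3.4 gives $U\subseteq Z$) versus $[T',[I,I]]\neq 0$, and killing the latter case via Lemma 3.5 together with Corollary 3.10 (dense branch) and Lemma 3.11 (non-dense branch). Nothing essential differs from the paper's argument.
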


So the prime images of Lie ideals $U$ of $[K,K]$ satisfying  that $[u\circ v, w]=0$ for every $u,v \in [U,U], w\in U$ when the prime ideal $P$ satisfies that $P^*\not= P$ are like this.

\begin{coro}
Let $A$ be semiprime, and let $U$ be a Lie ideal of $[K,K]$ such that $[u\circ v, w]=0$ for every $u,v \in [U,U], w\in U$. If $P$ is a prime ideal of $A$ such that  $P^*\not= P$ then either the projection of $U$ in $A/P$ is central, or $A$ is commutative, or $A$ is a central order in a 4-dimensional simple superalgebra or in a 8-dimensional simple superalgebra.
\end{coro}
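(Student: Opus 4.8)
The plan is to deduce the statement directly from Theorem 3.11, by transporting the whole configuration to the prime image $A/P$ and using the ideal that the hypothesis $P^*\neq P$ makes available there. Throughout write $\bar A=A/P$ and, for $S\subseteq A$, $\bar S=(S+P)/P$; since $P$ is a prime ideal, $\bar A$ is a prime superalgebra, and $P^*$ is a graded ideal as well, so all gradings and superbrackets descend to $\bar A$. As recorded in the paragraph opening this section, $P^*\neq P$ guarantees that $\bar I=(P^*+P)/P$ is a nonzero proper ideal of $\bar A$ with $\bar I\subseteq\bar K=(K+P)/P$. The goal is then to check that $(\bar A,\bar I,\bar U)$ satisfies the hypotheses of Theorem 3.11.

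First I would note that $U$ is already a Lie subalgebra of $A^-$: being a Lie ideal of $[K,K]$ it is in particular a $\phi$-submodule of $[K,K]$, so $U\subseteq[K,K]$ and hence $[U,U]\subseteq[U,[K,K]]\subseteq U$. Passing to $\bar A$, this makes $\bar U$ a subalgebra of $\bar A^-$, and the required Lie-stability over $\bar I$ follows from $\bar I\subseteq\bar K$ via
$$[\bar U,[\bar I,\bar I]]\subseteq\bigl[\bar U,[\bar K,\bar K]\bigr]\subseteq\bigl([U,[K,K]]+P\bigr)/P\subseteq\bar U.$$
Thus $\bar U$ occupies exactly the position of the subalgebra $U$ in Theorem 3.11 relative to $\bar A$ and its nonzero ideal $\bar I$.

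Next I would verify that the three $\circ$-conditions survive the quotient. By Lemma 2.2 the hypothesis $[u\circ v,w]=0$ forces $u\circ v\in Z$ for $u,v\in[U,U]_0$, $u\circ v=0$ for $u,v\in[U,U]_1$, and $(u\circ v)^2=0$ for $u\in[U,U]_0,\,v\in[U,U]_1$. Since $[\bar U,\bar U]=([U,U]+P)/P$ and since the image in $\bar A$ of an even central element of $A$ is again even and central, each identity passes verbatim to $\bar A$: one gets $\bar u\circ\bar v\in Z(\bar A)$, $\bar u\circ\bar v=0$, and $(\bar u\circ\bar v)^2=0$ on the respective homogeneous pieces of $[\bar U,\bar U]$. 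This is precisely the observation already made just before Lemma 3.1.

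With every hypothesis in place, applying Theorem 3.11 to $(\bar A,\bar I,\bar U)$ gives that either $\bar A$ is commutative, or $\bar A$ is a central order in a $4$-dimensional simple superalgebra, or in an $8$-dimensional one, or $\bar U\subseteq Z(\bar A)$; this last case says exactly that the projection of $U$ in $A/P$ is central, and the first three are the remaining alternatives (read in the prime image $A/P$). I do not expect a substantive obstacle: all the real work is inside Theorem 3.11, and the only care needed is the bookkeeping that (i) $U$ is bracket-closed, (ii) $[\bar U,[\bar I,\bar I]]\subseteq\bar U$ thanks to $\bar I\subseteq\bar K$, and (iii) the $\circ$-identities are preserved because central elements remain central in the quotient. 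The one point genuinely worth pinning down is the nonzeroness and properness of $\bar I=(P^*+P)/P$, which is exactly where the standing assumption $P^*\neq P$ is used.
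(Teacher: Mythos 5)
Your proposal is correct and follows essentially the same route as the paper: the paper's (implicit) proof of this corollary is exactly the material at the opening of Section 3 --- that $(P^*+P)/P$ is a nonzero proper ideal of $A/P$ contained in $(K+P)/P$, that $[(U+P)/P,[(P^*+P)/P,(P^*+P)/P]]\subseteq ([U,[K,K]]+P)/P \subseteq (U+P)/P$, and that the three $\circ$-conditions descend to the quotient --- followed by an application of the theorem on prime superalgebras with a distinguished ideal $I$, with the commutative/4-dimensional/8-dimensional alternatives read in the prime image $A/P$, exactly as you do. The only slip is a citation label: the lemma producing the $\circ$-conditions from $[u\circ v,w]=0$ is Lemma 2.3, not Lemma 2.2 (which is the density statement for $K$), a harmless discrepancy given that the paper's own cross-references in this section are themselves off by one.
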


\bigskip

\section {Prime images of Lie ideals when $P^*= P$.}

\bigskip

Next we consider the cases when $P^*= P$, for $P$ a prime ideal of $A$. So wehave a superinvolution on $A/P$ induced by the superinvolution on $A$. Recall that a superinvolution on $A$ is said to be of the first kind if $Z_H=Z$, and it is said to be of the second kind if $Z_H\not=Z$.

\begin{lem}
Let $A$ be a prime superalgebra with a superinvolution $*$ of the second kind. Let $U$ be a Lie ideal
of $[K,K]$ such that $u\circ v \in Z$ for every $u,v \in [U,U]_0$,  $u\circ v=0$ for every $u,v\in [U,U]_1$ and $(u\circ v)^2=0$ for every $u\in [U,U]_0, v\in [U,U]_1$.
Then either $U\subseteq Z$ or $A$ satisfies $S(3)$. 
\end{lem}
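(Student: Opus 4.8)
My plan is to use the distinguished skew central element furnished by the second kind to lift the data on $U$ to a Lie ideal of $[Q,Q]$ for a suitable localization $Q$, and then to invoke the structure Theorem of Section 3.

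Since $*$ is of the second kind, $Z=Z_H\oplus Z_K$ with $Z_K\neq 0$; fix $0\neq\lambda\in Z_K$. Then $\lambda$ is even, central and skew, and, $A$ being prime, it is regular, so $\lambda^2\in Z_H$ is regular. Localizing at $V=Z_H\setminus\{0\}$ I pass to $Q=V^{-1}A$, a prime superalgebra with superinvolution over the field $\Omega=V^{-1}Z_H$, in which $\lambda^2$, and hence $\lambda$ (with $\lambda^{-1}=\lambda(\lambda^2)^{-1}$), is invertible and central. As $\lambda$ is skew one has $\lambda H(Q)\subseteq K(Q)$ and $\lambda K(Q)\subseteq H(Q)$; invertibility upgrades the first to $K(Q)=\lambda H(Q)$, whence $Q=K(Q)+\lambda K(Q)$ and, $\lambda$ being central with $\lambda^2\in\Omega$, $[Q,Q]=[K(Q),K(Q)]+\lambda[K(Q),K(Q)]$. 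Setting $\hat U=V^{-1}U$, an $\Omega$-submodule of $K(Q)$, localization makes $\hat U$ a Lie ideal of $[K(Q),K(Q)]$ for which the three $\circ$-conditions hold, now relative to the even centre $F:=Z(Q)_0=V^{-1}Z$.

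The heart of the argument is to put $W=\hat U+\lambda\hat U\subseteq Q$ and to verify that $W$ satisfies the hypotheses of the structure Theorem. Because $\lambda$ is central with $\lambda^2\in\Omega$ and $\hat U$ is an $\Omega$-module we have $\lambda^2\hat U\subseteq\hat U$, and a direct computation gives $[W,W]=[\hat U,\hat U]+\lambda[\hat U,\hat U]\subseteq W$ and, using $[\hat U,[K(Q),K(Q)]]\subseteq\hat U$, also $[W,[Q,Q]]\subseteq\hat U+\lambda\hat U=W$; thus $W$ is a subalgebra of $Q^-$ and a Lie ideal of $[Q,Q]$. Since each $\lambda^{i}$ is central of grade $0$, $\lambda^iu\circ\lambda^jv=\lambda^{i+j}(u\circ v)$, which yields conditions (i) and (iii) for $[W,W]$ at once. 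For condition (ii), writing $\{x,y\}=xy+yx$, take $w=a+\lambda b\in[W,W]_0$ and $w'=c+\lambda d\in[W,W]_1$ with $a,b\in[\hat U,\hat U]_0$ and $c,d\in[\hat U,\hat U]_1$; then $w\circ w'=P+\lambda(Q+R)+\lambda^2S$ with $P=a\circ c$, $Q=a\circ d$, $R=b\circ c$, $S=b\circ d$. Polarizing the identity $(x\circ y)^2=0$, valid for each even $x$ and odd $y$, gives $\{P,Q\}=\{P,R\}=\{Q,S\}=\{R,S\}=0$ and, from $((a+b)\circ(c+d))^2=0$, the relation $\{P,S\}+\{Q,R\}=0$; as $P^2=Q^2=R^2=S^2=0$, expanding the square produces $(w\circ w')^2=\lambda^2(\{P,S\}+\{Q,R\})=0$. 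Hence all three conditions pass to $[W,W]$.

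Finally I apply the structure Theorem to $Q$, the subalgebra $W$ and a nonzero proper ideal $I$ of $Q$; this is legitimate because, $W$ being a Lie ideal of $[Q,Q]$, one has $[W,[I,I]]\subseteq[W,[Q,Q]]\subseteq W$ for every ideal $I$. (Should $Q$ be simple, so that no proper $I$ exists, either $W\subseteq Z(Q)$, or, by the description of Lie ideals in simple superalgebras of \cite{Go-S}, $W$ contains a large Lie ideal on which the $\circ$-conditions become a nontrivial $G$-identity, whence $Q$ is PI by \cite{K} and finite dimensional over $F$ by Lemma 1.7 of \cite{M}.) The Theorem yields one of: $Q$ commutative, $Q$ a central order in a $4$- or an $8$-dimensional simple superalgebra, or $W\subseteq Z(Q)$. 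A commutative prime superalgebra with nonzero odd part is, as shown in the previous section, a central order in a $2$-dimensional simple superalgebra; hence in each of the first three cases $A$, being a central order in $Q$, is a central order in a simple superalgebra of dimension at most $9$ over its centre, that is, $A$ satisfies $S(3)$. In the remaining case $\hat U\subseteq W\subseteq Z(Q)$, so $V^{-1}U\subseteq Z(Q)$; intersecting with $A$ (and using that nonzero central elements of a prime ring are regular) gives $U\subseteq Z$, the odd part of $U$ vanishing because, outside the small-dimensional cases already subsumed under $S(3)$, the odd centre of $Q$ is zero. The main obstacle is the faithful transfer of the $\circ$-conditions to $[W,W]$, carried out by the polarization above, together with the bookkeeping that lets one take $I$ proper or else dispose of the simple case directly.
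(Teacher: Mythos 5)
Your argument is correct and is essentially the paper's own proof: localizing at $V=Z_H\setminus\{0\}$ and passing from $\hat U$ to $W=\hat U+\lambda\hat U$ reproduces exactly the paper's replacement of $U$ by $V^{-1}(Z_HU)$ and then by $ZU$ (note $V^{-1}Z=\Omega\oplus\Omega\lambda$, so your $W$ is the paper's $ZU$), after which both proofs conclude via Theorem 3.11; your polarization computation merely fills in the transfer of condition (ii), which the paper asserts without detail. Your parenthetical detour for the case of simple $Q$ is unnecessary: properness of $I$ is never used in the arguments of Section 3, and the paper itself applies Theorem 3.11 with $I=A$ (via $[ZU,[A,A]]\subseteq ZU$), so that sketchy fallback can simply be deleted.
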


\begin{proof}[Proof:]
If $*$ is of the second kind we know that $Z_H=\{x\in Z : x^*=x\}\not= Z$. We may localize $A$ by
$V= Z_H - \{0\}$ and replace $U$ by $V^{-1}(Z_HU)$ and $A$ by $V^{-1}A$. The hypothesis remains unchanged, so we
keep for this superalgebra the same notation $A$, and now
$Z$ is a field. Let
$0\not= t\in Z_K$. Then $H=tK$ and $A=tK+K$. It follows that
$[ZU, [A,A]]\subseteq ZU$,  $u\circ v \in Z$ for every $u,v \in Z[U,U]_0$, $u\circ v=0$ for
every $u,v\in Z[U,U]_1$ and $(u\circ v)^2=0$ for every $u\in [U,U]_0, v\in [U,U]_1$. By theorem 3.11, either $ZU\subseteq Z$, which implies that $U\subseteq Z$, or $A$
satisfies $S(3)$.
\end{proof}

\begin{lem}
Let $A$ be a prime superalgebra with a superinvolution $*$ of the first kind. Let $U$ be a Lie ideal
of $[K,K]$ such that $u\circ v \in Z$ for every $u,v \in [U,U]_0$,  $u\circ v=0$ for every $u,v\in [U,U]_1$ and $(u\circ v)^2=0$ for every $u\in [U,U]_0, v\in [U,U]_1$.
Then either $U=0$ or $A$ satisfies $S(4)$. 
\end{lem}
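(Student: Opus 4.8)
The plan is to run the same reduction to Theorem 3.11 that proved Lemma 4.1 for the second kind, the essential new difficulty being that a first-kind superinvolution supplies no nonzero central skew element to play the role of the central $t$ used there. First I would clear the small cases. If $A$ satisfies $S(2)$ it satisfies $S(4)$ (a central order in a simple superalgebra of dimension at most $4$ over its center is a fortiori one of dimension at most $16$), and likewise if $A$ is commutative or a central order in a simple superalgebra of dimension $4$ or $8$; in all of these $A$ satisfies $S(4)$ and we are done. So I assume $A$ is none of these. Then $A$ does not satisfy $S(2)$, so by Theorem 2.2 $K$ is dense, i.e. $\overline K$ contains a nonzero ideal of $A$; since $K^*=K$ gives $\overline K^*=\overline K$, I may fix a nonzero $*$-ideal $I$ with $I\subseteq\overline K$. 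Because $U\subseteq[K,K]$ and $[U,[K,K]]\subseteq U$ we have $[U,U]\subseteq U$, so $U$ is a subalgebra of $A^-$, and the three degeneracy conditions on $[U,U]$ are exactly the hypotheses of Theorem 3.11.

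Granting the key containment $[U,[I,I]]\subseteq U$, Theorem 3.11 applied to this $I$ gives that $A$ is commutative, or a central order in a $4$- or $8$-dimensional simple superalgebra, or $U\subseteq Z$. The first three have been excluded, so $U\subseteq Z$; and here the first kind is decisive: $Z=Z_H$ forces $Z_K=0$, and since $U\subseteq[K,K]\subseteq K$ we get $U\subseteq Z\cap K=Z_K=0$. This is exactly why the conclusion is $U=0$ rather than the weaker $U\subseteq Z$ of the second-kind Lemma 4.1.

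The step I expect to be the main obstacle is the containment $[U,[I,I]]\subseteq U$. In Lemma 4.1 it came for free because after localization $H=tK$ made $[A,A]\subseteq Z[K,K]$, so one could take $I=A$; in the first-kind case the symmetric part $H$ is genuinely independent of $K$, and $[I,I]$ contributes the brackets $[I_H,I_H]$ and $[I_H,I_K]$, which need not lie in $[K,K]$. My plan to tame these is to exploit $I\subseteq\overline K$: expanding elements of $I$ as sums of products of skew elements and applying the derivation identities (1)--(4) together with $[U,[K,K]]\subseteq U$, I would try to push each $[u,[a,b]]$ (for $u\in U$, $a,b\in I$) back into $U$ modulo terms that, by the degeneracy of $[U,U]$ and by Lemmata 1.1 and 3.2, are central and hence act trivially. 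I would carry this out in the $*$-central closure $V^{-1}A$, a prime superalgebra over the field $V^{-1}Z_H$, to make these centrality and density arguments available.

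I also note that Theorem 3.11 by itself produces dimensions only up to $8$, whereas the lemma allows $S(4)$, i.e. dimension up to $16$; this signals that when $U\not\subseteq Z$ the first-kind analysis cannot be a bare citation of Theorem 3.11. I would expect instead that, once $U$ is noncentral, the degeneracy yields a nontrivial superidentity, so that Proposition 3.5 (Kharchenko) forces $A$ to be PI and hence, by Lemma 1.7 of \cite{M}, a central order in a finite-dimensional simple superalgebra; the precise bound $16$ would then come from the classification of finite-dimensional simple superalgebras carrying a first-kind superinvolution. Pinning this dimension down is, I expect, the genuinely hard part, and the reason the first-kind bound is $S(4)$ rather than the $S(3)$ of the second kind.
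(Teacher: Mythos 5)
There is a genuine gap, and it sits exactly where you predicted: the containment $[U,[I,I]]\subseteq U$ is never established, and it cannot be established in general. Your ideal $I$ lies in $\overline{K}$, the subalgebra generated by $K$, but elements of $\overline{K}$ are sums of \emph{products} of skew elements and are not themselves skew; hence $[I,I]\not\subseteq [K,K]$, and the only hypothesis available on $U$ --- that it is a Lie ideal of $[K,K]$, i.e. $[U,[K,K]]\subseteq U$ --- says nothing about brackets $[u,[a,b]]$ with $a,b\in I$. Your proposed repair (pushing such brackets back into $U$ ``modulo terms that are central and hence act trivially'') has no mechanism behind it: centrality of an error term does not place the bracket in $U$, and no argument is offered for why the errors would be central. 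Note that this is precisely the asymmetry with Lemma 4.1: for the second kind, localizing at $V=Z_H\setminus\{0\}$ and using a nonzero $t\in Z_K$ gives $H=tK$, so $A=tK+K$ and $[ZU,[A,A]]\subseteq ZU$ comes for free; for the first kind no such $t$ exists, which is why the paper abandons Theorem 3.11 entirely in this case. Moreover, the dimension mismatch you flagged is not a warning sign but an actual refutation of the route: Theorem 3.11 would yield ``$U\subseteq Z$ (hence $U=0$) or $S(3)$,'' yet the correct bound here is $S(4)$ because there exist simple superalgebras with first-kind superinvolution, $16$-dimensional over the center, carrying nonzero Lie ideals of $[K,K]$ satisfying the three degeneracy conditions; for such $A$ your conclusion would be false, so no amount of work can close the containment step. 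Your Kharchenko fallback correctly senses this, but it is only a sketch --- neither the nontrivial $G$-identity nor the classification step bounding the dimension by $16$ is carried out, and that is the entire content of the lemma.

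For comparison, the paper's proof never leaves the prime-with-superinvolution setting and is a short reduction to \cite{Go-L-S}: since $*$ is of the first kind, $Z_K=0$, so by Theorem 4.1 and Lemma 4.1 of \cite{Go-L-S} either $[K,K]$ is dense in $A$ or $A$ satisfies $S(2)$. Then it splits on whether $u^2=0$ for every $u\in[U,U]_0$. If yes, Theorem 3.3 of \cite{Go-L-S} gives $[U,U]=0$, and Lemma 4.5 of \cite{Go-L-S} gives $U=0$ or $S(2)$. If some $u^2\neq 0$, Theorem 3.4 of \cite{Go-L-S} gives either $S(4)$ or $[U,U]\subseteq Z$, and in the latter case $[[U,U],[U,U]]=0$, so applying Lemma 4.5 of \cite{Go-L-S} twice yields $U=0$. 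The one ingredient of your proposal that does match the paper is the observation that first-kindness ($Z_K=0$) is what upgrades ``central'' to ``zero''; everything between that observation and the hypotheses is what is missing.
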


\begin{proof}[Proof:]
Since  $*$ is of the first kind, $Z_K= K \cap Z=0$. So, from Theorem 4.1 and Lemma 4.1 in \cite {Go-L-S}, either $[K,K]$ is dense in $A$ or $A$ satisfies $S(2)$.  If $u^2=0$ for every $u\in [U,U]_0$, applying Theorem 3.3 in \cite {Go-L-S} we obtain that $[U,U]=0$. But then by Lemma 4.5 in \cite {Go-L-S} we obtain that either $U=0$ or $A$ satisfies $S(2)$. Suppose
then that $u^2\not= 0$ for some $u\in [U,U]_0$. By Theorem 3.4 in \cite {Go-L-S} we get that either
$[U,U]\subseteq Z$ or $A$ satisfies $S(4)$. But if $[U,U]\subseteq Z$ then $[[U,U],[U,U]]=0$ and applying twice Lemma 4.5 in \cite {Go-L-S} yields $U=0$.

\end{proof}

\bigskip

Combining the above results we obtain

\bigskip

\begin{theo}
Let $A$ be a semiprime superalgebra and $U$ a Lie ideal of $[K,K]$ with $u\circ v \in Z$ for every $u,v
\in [U,U]_0$,  $u\circ v=0$ for every $u,v\in [U,U]_1$ and $(u\circ v)^2=0$ for every $u\in [U,U]_0, v\in [U,U]_1$. Then $A$ is the subdirect sum of two semiprime
homomorphic images $A^{\prime}$, $A^{\prime \prime}$, such that $A^\prime$ satisfies $S(4)$ and the
image of $U$ in $A^{\prime \prime}$ is central.
\end{theo}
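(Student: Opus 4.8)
The plan is to reduce the semiprime case to the prime case via the subdirect decomposition that is already available. Since $A$ is semiprime, the intersection of all prime ideals is zero, so $A$ embeds as a subdirect product of its prime images $A/P$. For each prime ideal $P$, the hypotheses on $U$ — namely $u\circ v\in Z$ for $u,v\in[U,U]_0$, $u\circ v=0$ for $u,v\in[U,U]_1$, and $(u\circ v)^2=0$ for $u\in[U,U]_0,v\in[U,U]_1$ — pass to the quotient $A/P$, as was observed at the start of Section 3. The key point is that each prime image now falls into exactly one of the two scenarios we have already analyzed, according to whether $P^*\neq P$ or $P^*=P$.

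First I would sort the prime ideals into two families. If $P^*\neq P$, Corollary 3.12 applies: either the image of $U$ in $A/P$ is central, or $A/P$ is commutative, or a central order in a $4$- or $8$-dimensional simple superalgebra. In the latter ``bad'' cases the image of $U$ is still central (commutativity forces $Z=A/P$; and in the $4$- and $8$-dimensional central-order cases the whole analysis was carried out precisely to isolate when $U$ need not be central, so one checks these are covered by $S(4)$). If $P^*=P$, the induced superinvolution on $A/P$ is of the first or the second kind, and Lemmas 4.1 and 4.2 give that either the image of $U$ is central (indeed $U=0$ in the first-kind case, $U\subseteq Z$ in the second-kind case) or $A/P$ satisfies $S(3)$, respectively $S(4)$. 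In every case the dichotomy is the same: for each prime image, \emph{either} the projection of $U$ is central \emph{or} that prime image is a central order in a simple superalgebra of dimension at most $16$ over its center, i.e.\ contributes to $S(4)$.

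Next I would assemble the two-term subdirect sum. Let $\mathcal{P}_1$ be the set of prime ideals for which the image of $U$ is central, and $\mathcal{P}_2$ the set of prime ideals whose image satisfies the $S(4)$-type condition; by the preceding paragraph every prime ideal lies in $\mathcal{P}_1\cup\mathcal{P}_2$. Put
$$
P'=\bigcap_{P\in\mathcal{P}_2}P,\qquad P''=\bigcap_{P\in\mathcal{P}_1}P,
$$
and set $A'=A/P'$, $A''=A/P''$. Since $P'\cap P''=\bigcap_{P}P=0$, the natural map $A\to A'\oplus A''$ is injective, exhibiting $A$ as a subdirect sum of $A'$ and $A''$. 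By construction $A'$ is a subdirect product of prime images each satisfying the $S(4)$ condition, hence $A'$ satisfies $S(4)$; and the image of $U$ in $A''$ is a subdirect product of central images, hence central in $A''$.

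The main obstacle is the bookkeeping at the overlap and the precise meaning of ``central'' versus $S(n)$ for small $n$. A prime image may a priori satisfy \emph{both} alternatives (e.g.\ a low-dimensional central order in which the image of $U$ happens to be central), so I must make a definite choice assigning each such $P$ to one family; this is harmless since I only need $P'\cap P''=0$. More delicate is reconciling the various dimension bounds: Corollary 3.12 produces $4$- and $8$-dimensional central orders, Lemma 4.1 produces $S(3)$, and Lemma 4.2 produces $S(4)$, so I would verify that all of these are absorbed by a single $S(4)$ statement, since a central order in a simple superalgebra of dimension at most $16$ over its center is exactly what $S(4)$ records. Once this uniformization is in place the subdirect decomposition is immediate, and the theorem follows.
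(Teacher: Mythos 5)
Your proposal is correct and follows essentially the same route as the paper: the paper likewise sorts the prime ideals into the family $T'$ of those $P$ with $A/P$ satisfying $S(4)$ and the family $T''$ of those where the image of $U$ in $A/P$ is central (using Corollary 3.13 when $P^*\neq P$ and Lemmata 4.1, 4.2 when $P^*=P$, with all dimension bounds absorbed into $S(4)$), and then takes $A'$, $A''$ to be the quotients of $A$ by the two intersections, which meet in the intersection of all prime ideals, namely zero. Your additional bookkeeping --- the choice made at primes satisfying both alternatives, and the uniformization of $S(3)$, the $4$- and $8$-dimensional central orders, and the commutative case into the two alternatives --- only makes explicit what the paper leaves implicit.
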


\begin{proof}[Proof:]
Let $T^\prime=\{ P: P$ is a prime ideal of $A$ such that $A/P$ satisfies $S(4)\}$ and let
$T^{\prime \prime}= \{ P : P$ is a prime ideal of $A$ such that the image of $U$ in $A/P$ is
central$\}$. 

If we consider $P$ a prime ideal of $A$ such that $P^*\not= P$, we know from Corolally 3.13
that either $A/P$  is a central order in a simple superalgebra at most 8-dimensional
over its center, or $(U+P)/P$ is central. If we consider $P$ a prime ideal of
$A$ such that $P^*=P$, it follows from Lemmata 4.1, 4.2  that either $A/P$  is a central
order in a simple superalgebra at most 16-dimensional over its center, or the image of $U$ in $A/P$ is
central. 

So every prime ideal of $A$ belongs either $T^\prime$ or $T^{\prime \prime}$. Then
$A^\prime$ is obtained by taking the quotient of $A$ by the intersection of all the prime ideals in
$T^\prime$, and $A^{\prime \prime}$ is obtained by taking the quotient of $A$ by the intersection of
all the prime ideals in $T^{\prime \prime}$. This proves the theorem.
\end{proof}

\bigskip

We finally arrive at the main theorem on the Lie structure of $[K,K]$.

\bigskip

\begin{theo}
Let $A$ be a semiprime superalgebra with superinvolution $*$, and let $U$ be a Lie ideal of $[K,K]$. Then
either $A$ is a subdirect sum of two semiprime homomorphic images $A^\prime$, $A^{\prime \prime}$,
with $A^\prime$ satisfying $S(4)$ and the image of $U$ in $A^{\prime \prime}$ being central, or $U\supseteq
[J\cap K, K]\not= 0$ for some ideal $J$ of $A$.
\end{theo}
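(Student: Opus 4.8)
The plan is to run the dichotomy of Lemma 2.1 on the Lie ideal $U$ of $[K,K]$ and route each branch into the machinery already built. Applying Lemma 2.1 (here $A$ is semiprime) gives: either $U$ is dense in $A$, or $[u\circ v,w]=0$ for all homogeneous $u,v\in[U,U]$ and $w\in U$. These two alternatives are exactly the two I must turn into the two clauses of the statement. In the second case there is nothing new to do: Lemma 2.3 applies verbatim and yields $u\circ v\in Z$ for $u,v\in[U,U]_0$, $u\circ v=0$ for $u,v\in[U,U]_1$, and $(u\circ v)^2=0$ for $u\in[U,U]_0,\,v\in[U,U]_1$. These are precisely the hypotheses of Theorem 4.3, so I would simply invoke Theorem 4.3 to present $A$ as a subdirect sum of semiprime homomorphic images $A'$, $A''$ with $A'$ satisfying $S(4)$ and the image of $U$ in $A''$ central, which is the first alternative.

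The substance is the dense branch. Here the proof of Lemma 2.1, together with the remark following it, hands me an explicit nonzero $*$-ideal $J=A[u\circ v,w]A[u\circ v,w]A[u\circ v,w]A\subseteq\overline U$, where $x:=[u\circ v,w]\neq 0$ and $x\in H$; moreover Step 1 of that proof shows $[x,H]\subseteq U$ and $[x,A]\subseteq\overline U$. I would take this $J$ as the ideal in the conclusion and aim to prove $[J\cap K,K]\subseteq U$ and $[J\cap K,K]\neq 0$. Because $J$ is a $*$-ideal and $\tfrac12\in\phi$, it splits as $J=J_H\oplus J_K$ with $J_K=J\cap K$, so only the skew part must be controlled. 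The containment is meant to be extracted from $J\subseteq\overline U$ by transporting brackets across the products defining $J$ via the super-Leibniz identities (1)--(4) and collapsing the resulting terms through the relation $[[U,U],K]\subseteq U$ (Step 1 of Lemma 2.1) and through $[x,H]\subseteq U$; the cleanest target is to land the skew elements of $J$ inside $[U,U]$, whence $[J\cap K,K]\subseteq[[U,U],K]\subseteq U$. For nonvanishing I would argue by contradiction: if $[J\cap K,K]=0$ then $J\cap K$ centralizes $K$, and by Theorem 2.2 ($K$ is dense unless $A$ satisfies $S(2)$) together with Lemma 3.2 and the ``central ideal forces commutativity'' argument inside Lemma 3.1, a nonzero $*$-ideal of the nontrivial semiprime $A$ cannot have central skew part; the only escape is the degenerate $S(2)$ situation, which is already subsumed under the first alternative.

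The hard part will be precisely this containment $[J\cap K,K]\subseteq U$ in the dense branch. Density delivers only $J\subseteq\overline U$, i.e. control inside the generated associative algebra, whereas membership in the Lie ideal $U$ itself is a strictly finer property; and bracketing against the full $K$ (rather than against $[K,K]$, for which $[U,[K,K]]\subseteq U$ is automatic) is exactly where the reduction of $J\cap K$ into $[U,U]$ must be carried out by hand. Keeping track of the signs and parities while rewriting elements of the shape $a_1xa_2xa_3xa_4$ and ensuring that every intermediate term remains in $U$ is the genuine obstacle; by contrast the first branch is a direct citation of Lemma 2.3 and Theorem 4.3, and the nonvanishing step is routine semiprimeness bookkeeping.
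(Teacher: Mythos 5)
Your skeleton (run the Lemma 2.1 dichotomy, send the non-dense branch through Lemma 2.3 and Theorem 4.3, and in the dense branch produce an ideal $J$ with $[J\cap K,K]\subseteq U$) is the paper's, and your non-dense branch is correct --- in fact slightly more direct than the paper's, since the paper runs the dichotomy on $V=[U,U]$ and must afterwards transfer centrality from $V$ back to $U$ prime by prime. But the dense branch, which is the actual content of the theorem, has a genuine gap exactly where you flag ``the hard part'', and the missing ingredient is not sign bookkeeping. Because you applied Lemma 2.1 to $U$ itself, density only gives $J\subseteq\overline U$, and no amount of Leibniz manipulation will get you from there to $[J\cap K,K]\subseteq U$: the natural collapse of brackets against products (via the identity $[xy,z]=[x,yz]+(-1)^{\bar x\bar y+\bar x\bar z}[y,zx]$, which shows $[\overline M,A]=[M,A]$ for \emph{any} submodule $M$) lands you in $[U,A]\cap K=[U,K]$, and $[U,K]\not\subseteq U$ because $U$ is a Lie ideal of $[K,K]$ only, not of $K$. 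Your proposed substitute target, $J\cap K\subseteq[U,U]$, is strictly stronger than anything density provides, and there is no reason it should hold. The paper's fix is to apply Lemma 2.1 to $V=[U,U]$ (also a Lie ideal of $[K,K]$) from the outset, so that the dense branch yields $J\subseteq\overline V$; then the identity above gives $[\overline V,A]=[V,A]=[V,H]+[V,K]$, and intersecting with $K$ gives $[J\cap K,K]\subseteq[V,K]=[[U,U],K]\subseteq[U,[K,K]]\subseteq U$. That single choice of $V$ instead of $U$, plus the one-line identity, is the whole dense branch; without it your containment cannot be established.

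Your nonvanishing step is also wrong in structure. The claim that a nonzero $*$-ideal of a nontrivial semiprime $A$ ``cannot have central skew part'' is not true and is not what the paper proves: prime images of $A$ can perfectly well be commutative or supercommutative, and then the skew part of any ideal is central there. Accordingly, the case $[J\cap K,K]=0$ does not end in a contradiction (nor in ``the $S(2)$ situation''); the paper shows instead that it forces the \emph{first} alternative of the theorem, by a prime-by-prime analysis: $[J\cap K,K]=0$ gives $[u\circ v,w]=0$ for $u,v\in[J\cap K,J\cap K]$, $w\in J\cap K$, so Lemmata 4.1, 4.2 and Corollary 3.13 apply to $J\cap K$; then for $P^*\neq P$ either $A/P$ is commutative or (when $J\subseteq P$) one reruns Lemma 2.3 modulo $P$ and Corollary 3.13 to get $(U+P)/P$ central or $A/P$ satisfying $S(4)$, while for $P^*=P$ one gets $A/P$ supercommutative (hence a central order in a simple superalgebra of dimension at most $4$) or, after localizing, a field. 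This is a substantial case analysis, not ``routine semiprimeness bookkeeping'', and it is needed precisely because the theorem asserts the dichotomy ``either $[J\cap K,K]\neq 0$ or the subdirect-sum alternative holds'', not that $[J\cap K,K]\neq 0$ always.
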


\begin{proof}[Proof:]
We consider $V=[U, U]$, which is also a Lie ideal of $[K,K]$. From Lemmata 2.1 and 2.3 we know that either $V$ is dense in $A$, and so there exist a
nonzero ideal $J$ such that $J\subseteq \bar V$, or the conditions i), ii) and iii) in Lemma 2.3 are satisfied by $V$. In the second case we obtain by Theorem 4.3 the first part of
the theorem for $V$. So $(V+P)/P$ is central in $A/P$ for some $P$ prime ideals of $A$. But we notice that in this if $(V+P)/P\subseteq Z(A/P)$, then the conditions i), ii) and iii) in Lemma 2.3 are also satisfied by $(U+P)/P$ in $A/P$. So from Corollary 3.13 and Lemmata 4.1 and 4.2 we have that either $(U+P)/P$ is central or $A/P$ verifies $S(4)$. So,  like in Theorem 4.3, we have the first part of the theorem.  Now we assume  that $J\subseteq \bar V$.

The identity
$$[xy,z]=[x,yz]+(-1)^{\bar x \bar y+\bar x \bar z}[y,zx]$$

{\noindent can be used to show that $[\bar V,A]=[V,A]$. Hence $[J\cap K, K]\subseteq [\bar V,
A]=[V,A]=[V,H]+[V,K]$. But $[V,H]\subseteq H$, and $[V,K]\subseteq K$, so $[J\cap K, K]\subseteq [[U,U],K]
\subseteq U$.}

Finally, suppose that $[J\cap K, K]=0$, then $[u\circ v,w]=0$ for every $u,v \in [J\cap K, J\cap K], w\in J\cap K$ because
$[uv,w]=u[v,w]+(-1)^{\bar v
\bar w}[u,w]v=0$. So by Lemmata 4.1, 4.2 and Corollary 3.13 it follows
that for each prime image, $A/P$, of $A$ either  its center contains
$((J\cap K)+P)/P$, or $A/P$ is a central order in a simple superalgebra at most 16-dimensional over
its center.

We claim that if the image of $J \cap K$ in $A/P$ for some prime ideal $P$ of $A$ is central, then $A$ is as described in the first part of the conclusion of the theorem. 

Let $P$ be a prime ideal such that $P^*\not= P$. If $(J+P)/P\not=0$,
then since $A/P$ is a prime superalgebra we get $((J\cap P^*) +P)/P \not=0$, and so we have $((J\cap
P^*) +P)/P\subseteq ((J\cap K)+P)/P \subseteq Z_0(A/P)$, that is, $A/P$ is commutative. So $A/P$ is
commutative unless $J\subseteq P$. And if $J\subseteq P$, then by the proof of Lemma 2.1  we know
that  $A[u\circ v, w]A[u\circ v,w]A[u\circ v, w]A\subseteq P$ for every $u, v \in [V,V],  w \in V$. Because  $P$
is a prime ideal we deduce that $[u\circ v,w]\in P$ for every $u, v \in [V,V], w \in V$. But now, by Lemma 2.3,  $(V+P)/P$ satisfies the conditions i), ii) and iii) in this lemma.   So by Corollary 3.13 we obtain that either
$(V+P)/P\subseteq Z_0(A/P)$, or $A/P$ satisfies $S(4)$. Applying the same to $(U+P)/P$, which satisfies also that  $[u\circ v, w]=0$ for every $u, v\in ([U,U]+P)/P, w\in (U+P)/P$, we obtain that either $(U+P)/P\subseteq Z_0(A/P)$ or $A/P$ satisfies $S(4)$.

 And, if $P$ is a prime ideal such that $P^*=P$, then $A/P$ has a superinvolution
induced by * and
$K(A/P)=(K+P)/P$.  In this case, if $((J\cap K) +P)/P=0$, we get $(J+P)/P\subseteq (H+P)/P=H(A/P)$, and
therefore $(J+P)/P$ is supercommutative. But then for every $a,b \in A/P$ and $y,z \in (J+P)/P$ it
follows that

\begin{eqnarray*}
yabz&=&(-1)^{(\bar b + \bar z)(\bar y + \bar a)}(bz)(ya)=(-1)^{\bar b (\bar y +
\bar a)} b(ya)z\\
&=& (-1)^{\bar b \bar y + \bar b \bar a+ (\bar a +\bar z)\bar
y} b(az)y= (-1)^{\bar b \bar a} ybaz.
\end{eqnarray*}

{\noindent Since $A/P$ is prime, $ab = (-1)^{\bar a \bar b} ba$, that is, $A/P$ is
supercommutative. Now, from Lemma 1.9 in \cite {M},  $A/P$ is a central order in a simple superalgebra
at most $4$-dimensional over its center. And, if
$((J\cap K)+P)/P\not=0$, then
$Z_0(A/P)\not=0$. So by localizing at
$V=(Z_0(A/P)\cap H(A/P))-\{0\}$  we can suppose that $Z_0(A/P)$ is a field, which we denote by
$Z$. We will replace $V^{-1}(A/P)$ by $A/P$ and $V^{-1}((J+P)/P)$ by $(J+P)/P$. Then, if
$0\not= t\in ((J\cap K)+P)/P$, we have  $tH=K$ with
$H=H(A/P), K=K(A/P)$. So $K=tH\subseteq ((K\cap J)+ P)/P\subseteq Z(A/P)$,  and
$H\subseteq t^{-1}Z(A/P)\subseteq Z(A/P)$. Therefore $A/P$ is a field.}

\end{proof}

Let $Ann T= \{ x \in A : xT=Tx=0\}$. Finally we have

\bigskip

\begin{coro}
Let $A$ be a semiprime superalgebra with superinvolution $*$, and let $U$ be a Lie ideal of $[K,K]$. Then
either $[J\cap K,K]\subseteq U$ where $J$ is a nonzero ideal of $A$ or there exists a semiprime ideal
$T$ of $A$ such that $A/Ann T$ satisfies $S(4)$ and $(U+T)/T\subseteq Z_0(A/T)$.
\end{coro}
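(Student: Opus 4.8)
The plan is to deduce this as a reformulation of Theorem 4.4, trading the external ``subdirect sum'' description for the intrinsic datum of one ideal $T$ together with its annihilator. First I would apply Theorem 4.4 to the pair $(A,U)$. If its second alternative holds, then $U\supseteq[J\cap K,K]\neq0$ for some ideal $J$; since $[J\cap K,K]\neq0$ forces $J\neq0$, this is precisely the first alternative of the corollary and nothing further is needed.

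So the work is in the case where the first alternative of Theorem 4.4 holds, i.e.\ $A$ is a subdirect sum of $A'$ and $A''$ with $A'$ verifying $S(4)$ and the image of $U$ in $A''$ central. Unwinding the proof of Theorem 4.4, this decomposition is built from the two families of prime ideals $\mathcal T'=\{P: A/P\text{ satisfies }S(4)\}$ and $\mathcal T''=\{P:(U+P)/P\subseteq Z_0(A/P)\}$, whose union is the whole prime spectrum of $A$, via $A'=A/T_1$ and $A''=A/T$ where $T_1=\bigcap_{P\in\mathcal T'}P$ and $T=\bigcap_{P\in\mathcal T''}P$. I would take this $T$ as the required ideal. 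It is an intersection of prime ideals, hence semiprime, and $(U+T)/T\subseteq Z_0(A/T)$ because $U$ is central modulo each $P\in\mathcal T''$ and the embedding $A/T\hookrightarrow\prod_{P\in\mathcal T''}A/P$ is subdirect. Moreover, since $A$ is semiprime and every prime ideal lies in $\mathcal T'$ or $\mathcal T''$, we have $T\cap T_1=\bigcap_P P=0$.

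It remains to pass from ``$A/T_1$ satisfies $S(4)$'' to ``$A/\operatorname{Ann}T$ satisfies $S(4)$'', and this is the one point needing care. From $T\cap T_1=0$ I get $T_1T\subseteq T_1\cap T=0$ and $TT_1=0$, so $T_1\subseteq\operatorname{Ann}T$. I would then verify that $\operatorname{Ann}T$ is itself a semiprime ideal: if $B$ is an ideal with $B^2\subseteq\operatorname{Ann}T$, then $(BT)^2\subseteq B^2T=0$, so $BT=0$ and, symmetrically, $TB=0$ by semiprimeness of $A$, whence $B\subseteq\operatorname{Ann}T$. Thus $A/\operatorname{Ann}T$ is a semiprime homomorphic image of $A/T_1$, and its prime images are exactly the $A/P$ with $P\supseteq\operatorname{Ann}T\supseteq T_1$ prime; each such $A/P$ is already a prime image of $A/T_1$ and hence a central order in a simple superalgebra at most $16$-dimensional over its center. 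Therefore $A/\operatorname{Ann}T$ satisfies $S(4)$, which is the second alternative of the corollary.

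The only genuine obstacle is this transfer of the property $S(4)$ along the quotient map $A/T_1\to A/\operatorname{Ann}T$; it rests on the observations that $S(4)$ is inherited by semiprime homomorphic images (the prime images of a quotient being among the prime images of the original) and that $T_1\subseteq\operatorname{Ann}T$. Everything else is a direct reading of Theorem 4.4 together with routine semiprime-ring bookkeeping.
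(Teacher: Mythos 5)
Your proposal is correct and takes essentially the same route as the paper's own proof: you choose the same ideal $T$ (the intersection of the prime ideals $P$ with $(U+P)/P\subseteq Z_0(A/P)$), make the same observation that $\operatorname{Ann}T$ contains the intersection $T_1$ of the primes whose quotients satisfy $S(4)$, and transfer $S(4)$ from $A/T_1$ to $A/\operatorname{Ann}T$ exactly as the paper does. The only difference is that you spell out two points the paper leaves implicit (that $T\cap T_1=0$ yields $T_1\subseteq\operatorname{Ann}T$, and that $\operatorname{Ann}T$ is a semiprime ideal, so $A/\operatorname{Ann}T$ is indeed a subdirect product of its prime images), which is a clarification of, not a departure from, the paper's argument.
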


\begin{proof}[Proof:]
By theorem 4.4 we have that either the first conclusion holds, or, for each prime ideal $P$ of $A$,
either $A/P$ satisfies $S(4)$ or $(U+P)/P \subseteq Z_0(A/P)$. Let $T$ be the intersection of the
prime ideals $P$ of $A$ such that $(U+P)/P\subseteq Z_0(A/P)$. Then
$Ann T$ contains the intersection of those prime ideals $P$ such that $A/P$ satisfies $S(4)$. So we
get that
$A/Ann T$ satisfies $S(4)$, and this proves the result.
\end{proof}

\author{Jes\'us Laliena  \footnote {The author has been
supported by the Spanish Ministerio de Ciencia e Innovaci\'on (MTM 2010-18370-CO4-03).} 
\\{\small Departamento de Matem\'aticas y Computaci\'on}\\
{\small  Universidad de La Rioja}\\
{\small  26004, Logro\~no. Spain}\\
{\small jesus.laliena@dmc.unirioja.es }}

\end{document}